\theoremstyle{plain}
\newtheorem{thm}{Theorem}[section]
\newtheorem{lem}[thm]{Lemma}
\newtheorem{prop}[thm]{Proposition}
\newtheorem{cor}[thm]{Corollary}
\theoremstyle{definition}
\newtheorem{exmp}{Example}[section]
\newtheorem*{rem*}{Remark}
\newcommand{\R}{\mathbb{R}}
\newcommand{\N}{\mathbb{N}}
\newcommand{\E}{\mathbb{E}}
\newcommand{\p}{\mathbb{P}}
\renewcommand{\P}{\mathbb{P}}
\renewcommand{\leq}{\leqslant}
\renewcommand{\geq}{\geqslant}
\newcommand{\kernel}{p}
\def\({\left(}
\def\){\right)}
\def\[{\left[}
\def\]{\right]}
\def\<{\langle}
\def\>{\rangle}
\def\d{\delta} 
\def\bs{\backslash}
\title {Laplace Dirichlet heat kernels in convex domains
\footnotetext{2020 MS Classification:
    60J65, 35K08. {\it Key words and phrases}:  Laplacian, heat kernel, Brownian motion, estimates, convex set, exit time.
       }}
\author{  G. Serafin\\
Wroc{\l}aw University of Science and Technology\\
Ul. Wybrze\.ze Wyspia\'nskiego 27\\ Wroc{\l}aw, Poland.}
\date{}
\begin{document}
\maketitle

\begin{abstract} 
We provide general lower and upper bounds for Laplace Dirichlet heat kernel of  convex $\mathcal C^{1,1}$ domains. The obtained estimates precisely describe the exponential behaviour of the kernels, which has been known only in a few special cases so far.  Furthermore, we characterize a class of sets for which the estimates are sharp, i.e. the upper and lower bounds coincide up to a multiplicative constant. In particular, this includes sets of the form  $\{x\in \R^n: x_n>a|(x_1,...,x_{n-1})|^p\}$ where $p\geq2$, $n\geq2 $ and $a>0$. 
\end{abstract}

\section{Introduction}
Heat kernels are basic objects in mathematical analysis, as fundamental solutions to parabolic differential equation (heat equations), as well as in the theory of stochastic processes, as their  transition probability densities. 
They are also, or maybe primarily, important from the point of view of Physics, since they describe evolution of particles, temperature and other phenomena. Despite of a very long and rich history of research on heat kernels in various settings, it turns out that there are still many open question even in the the most classical case, i.e. the one involving  the Laplace operator $\Delta$ (or, equivalently, the Brownian motion) in Euclidean space, which is the subject of this article.

Let $p(t,x,y)=(4\pi t)^{-n/2}e^{-|x-y|^2/4t}$ be the global Laplace heat kernel in $\R^n$, $n\in\N$. For a domain $D\subset \R^n$ we denote by  $p_D(t,x,y)$  the Dirichlet heat kernel of $D$, which is the fundamental solution to the heat equation with the Dirichlet condition at the boundary. From probabilistic point of view, $p_D(t,x,y)$ is the transition probability density of Brownian motion killed when exiting the set $D$. Since applicable explicit formulae for $p_D(t,x,y)$ are known only in a few cases (half-lines, intervals and their products), estimates are strongly desired. They have been intensively  studied for more than  half a century and led to numerous significant results (see, among others,  \cite{CheegerYau:1981, D, GrigorSaloff:2002, MR2807275, MS,  Zhang:2002, Zhang:2003, U,VdB2, VdB3}). In particular, it follows from general theory (\cite{D,DS}) that  for a bounded domain $D$ with boundary smooth enough    the heat kernel $p_D(t,x,y)$ is comparable for large $t$ with $\delta_D(x)\delta_D(y)e^{-\lambda_1t}$, $ x,y\in D$,
where $\lambda_1$ stands for the first eigenvalue of $-\Delta$ in $D$, $\delta_D(x)$ for distance of $x$ to the boundary of $D$. For this reason, we will focus on small times (but will be considering unbounded domains as well).

Most of   the  general heat kernel estimates in the literature  share one common weakness: lower and upper bounds are not comparable and their ratio is usually of the form $e^{|x-y|^2/ct}$ for some constant $c>0$. Such estimates are therefore very imprecise. Let us  recall Zhang's result \cite{Zhang:2002}, which provides the sharpest known bounds for the heat kernel of any set $D$ which is a  bounded $\mathcal C^{1,1}$ domain or a complement of a closure of a bounded $\mathcal C^{1,1}$ set.
Namely,  there are constants $c_1,c_2,c_3,c_4>0$ such that for $x,y\in D$ and $t<T$ it holds 
\begin{eqnarray}
  \label{eq:Z}
  c_1\left(\frac{\delta_D(x)\delta_D(y)}{t}\wedge 1\right)\frac{e^{-c_2|x-y|^2/t}}{t^{n/2}}  \leq p_D(t,x,y) \leq c_3\left(\frac{\delta_D(x)\delta_D(y)}{t}\wedge 1\right)\frac{e^{-c_4|x-y|^2/t}}{t^{n/2}}.
\end{eqnarray}
One may observe that incomparability of above-given bounds is caused by two different constants in exponents. Estimates with such a property are known as quantitatively sharp estimates. In fact, there are some results with correct exponents, but they completely fail  in describing the boundary hehaviour (see e.g. \cite{VdB1, VdB2, VdB3}). For instance, the main result of \cite{VdB2} (combined with \cite{MS}), for simplicity restricted to convex domains,  states that
$$p_D(t,x,y)\geq c\(1\wedge \frac{\(\delta_D(x)\wedge \delta_D(y)\)^2}{t}\)\frac{e^{-\lambda t/\(\delta_D(x)\wedge \delta_D(y)\)^2}}{\(1+\frac1t\(\delta_D(x)\wedge \delta_D(y)\)^2\)^{(n+2)/2}}p(t,x,y),$$
for some $c>0$, where $\lambda$ stands for the first eigenvalue of $-\Delta$ in the unit ball. However, the article was focused on asymptotics of the heat kernels with fixed space arguments, where the boundary behaviour plays marginal role. Until recently, precise two-sided estimates for Dirichlet heat kernels have been known only in such basic cases as a half-line and an interval (and their multidimensional extensions) as they are given by simple explicit formulae. 
Even the case of such classical set as a ball turned out to require a more subtle approach and has been solved   in \cite{MS}.  Precisely, for a unit ball $B=B(0,1)$ centered at the origin and   for every  $T>0$ there exists a constant $C=C(n,T)>1$  such that 
\begin{eqnarray}
\label{eq:MS}
\frac{1}{C}\,  h(t,x,y)p(t,x,y)\leq p_B(t,x,y)\leq C\, h(t,x,y)p(t,x,y)
\end{eqnarray}
for every $x,y\in B$ and $t<T$, where 
\begin{eqnarray}
\label{eq:htxy:est}
  h(t,x,y) = \left(1\wedge\frac{\delta_B(x)\delta_B(y)}t\right)+\left(1\wedge\frac{\delta_B(x)|x-y|^2}t\right)\left(1\wedge\frac{\delta_B(y)|x-y|^2}t\right)\/.
\end{eqnarray}
Note that proper description of the exponential behaviour imposed the appearance of a new non-exponential factor $h(t,x,y)$.  Above estimates have been complemented with asymptotics in \cite{Serafin:asymp}, which revealed that the behaviour of $p_B(t,x,y)$ is in fact driven by the expression $\delta\(\frac{x+y}2\)/\sqrt t$. A similar property will be observed in general lower bound \eqref{eq:main2}. We refer the reader to \cite{BGR1, BGR2, BM, GKK, MSZ, NSS1, NSS2} for some other recent articles focused on sharp estimates of heat kernels in other settings.

The goal of this paper is to derive heat kernel estimates with correct exponential behaviour for general $\mathcal C^{1,1}$ convex domains $D$. The first main result is  the  following upper bound (see Theorem \ref{thm:upper})
\begin{align}\nonumber
&p_D(t,x,y)\\\label{eq:main1}
&\leq C p(t,x,y)\[\(1\wedge \frac{\d_D(x)\d_D(y)}{t}\)+\(1\wedge \frac{\d_{H_x}(x)\d_{H_x}\(y\)}{t}\)\(1\wedge \frac{\d_{H_y}(y)\delta_{H_y}\(x\)}{t}\)\],
\end{align}
where $x,y\in D$, $0<t<T$, $C=C(D,T)$ and  $H_x$ is any half-space such that $D\subset H_x$ and $\delta_D(x)=\delta_{H_x}(x)$. Next, we provide lower bounds of the form (see Theorem \ref{cor:mainlower})
\begin{align}\nonumber
&p_D(t,x,y)\\\label{eq:main2}
&\geq C p(t,x,y)\(1\wedge \frac{\d_{D}(x)\(\d_{D}\(\frac{x+y}2\)+\sqrt t\)}{t}\)\(1\wedge \frac{\d_{D}(y)\(\delta_{D}\(\frac{x+y}{2}\)+\sqrt t\)}{t}\),\\\label{eq:main3}
&\approx   p(t,x,y)\[\(1\wedge\frac{\delta_D(x)\delta_D(y)}{t}\)+\(1\wedge \frac{\d_{D}(x)\d_{D}\(\frac{x+y}2\)}{t}\)\(1\wedge \frac{\d_{D}(y)\delta_{D}\(\frac{x+y}{2}\)}{t}\)\],
\end{align}
where $\approx$ means that the ratio of both sides is uniformly bounded and bounded away from zero.
The exponential behaviour is indeed  treated well in each of the above bounds, as expected. Let us now focus on   the non-exponential factors. They  are similar, but not identical. The reason is that they strongly depend on the shape of the boundary of $D$ (see Example \ref{exmpl:1}). Note that in one dimensional case, where any convex set is just an interval with (at most) two-point boundary,  all of the above bounds are equivalent. If there exist elementary sharp estimates in other dimensions,  they are presumably more complex and involving more detailed geometrical features of the set $D$. The main  advantages of  bounds \eqref{eq:main1} -- \eqref {eq:main3} are  therefore  not only the proper exponential behaviour but also a relatively simple form. Furthermore,  they follow sharp estimates for a large class of domains. Indeed, we introduce a class  $\mathcal S_R$ such that the heat kernel of any $D\in \mathcal S_R$ admits two-sided estimates of the form \eqref{eq:main1}.  As an example, we show that  $\mathcal S_R$ contains sets of the form  
$\{x\in \R^n: x_n>a|(x_1,...,x_{n-1})|^p\}$, where $p\geq2$, $n\geq2 $ and $a>0$. Such sets are usually difficult to study, as neither  they are bounded nor their complements are bounded; see \cite{Ban.at.al,Li} for some result concerning the first exit time of such sets and \cite{MR2807275} for quantitatively sharp heat kernel estimates. Then, we   fully characterize a class $\mathcal S_Q\subset \mathcal S_R$ of sets whose heat kernels satisfy two-sided estimates of the forms \eqref{eq:main2} and \eqref{eq:main3}. In particular, $\mathcal S_Q$ contains balls (cf. \eqref{eq:MS}). Finally, let us note that all the obtained bounds immediately imply estimates for the first exit time and place density $q_D^x(t,z)$ of Brownian motion from a domain $D$. Precisely, the well known representation $q_D^x(t,z)=\frac12\frac{\partial}{\partial n_z}p_D(t,x,z)$ 
  and Dirichlet boundary condition give us
$$q_D^x(t,z)=\frac12\lim_{\substack{D\ni y\rightarrow z\\ (y-z)||n_z}}\frac{p_D(t,x,y)}{\delta_D(y)},$$
where $n_z$ is is the inward normal direction at $z\in \partial D$, which allows us easily transform estimates of $p_D(t,x,y)$ into estimates of $q_D^x(t,z)$.

 The assumption of  smoothness of the boundary of $D$ in the main results is very natural and common in the topic. On the other hand, convexity of the set $D$ is necessary  to obtain exponential behaviour of the same order as in the global heat kernel $p(t,x,y)$.  
Namely, \mbox{S. R. S. Varadhan} showed (Corollary  4.7 in \cite{Var}) that 
$$\lim _{t\rightarrow 0}t\ln\(p_D(t,x,y)\)=\frac14 d_D^2(x,y),$$
where $d_D(x,y)$ is the infimum of lengths of arcs included in $D$ and  connecting $x$ and $y$.  If $D$ was concave, there would be $x,y\in D$ such that $d_D(x,y)>|x-y|$ and consequently $e^{-d^2_D(x,y)/4t}<<e^{-|x-y|^2/4t}$ for $t$ small enough. One could naturally try to obtain estimates with  the term $-d^2_D(x,y)/4t$ in the exponent, but this seem to be a much more challenging  task and rather a material for further research, as there are expected some additional  exponential terms related to  Buslaev conjecture \cite{Bus,IK}. The first attempts of describing heat kernels behavior at points satisfying convexity property, i.e. such that the whole interval $\overline{xy}$ connecting $x$ and $y$ is contained in $D$, are associated with the \emph{property of not feeling the boundary}, introduced by M. Kac in \cite{Kac}, which says that for such  points $x$ and $y$ it holds
$\frac{p_D(t,x,y)}{p(t,x,y)}\stackrel{t\rightarrow0}{\longrightarrow}1$. In \cite{VdB1}, the following  rate of convergence was derived 
\begin{align}\label{eq:vdB}
 p(t,x,y)&\geq p_D(t,x,y)\geq p(t,x,y)\(1-e^{-\rho^2/t}\sum_{k=1}^n\frac{2^k}{(k-1)!}\(\frac{\rho^2}{t}\)^{k-1}\),
\end{align}
where $\rho=\sup_{\substack{w\in \partial D\\z\in \overline{xy}}}|w-z|$ denotes the distance of the interval $\overline{xy}$ to the boundary \mbox{of $D$.} \mbox{A simple} observation is that for $\rho<c \sqrt t$ the bound \eqref{eq:vdB} induces sharp  estimates of $p_D(t,x,y)$. As mentioned before, all the general results in the literature    fail to describe the proper exponential behaviour in the remaining case $\rho<c \sqrt t<C$, and consequently provide bounds with a substantial error. For this reason, the results presented in the paper are first of their kind. One way to explain such enhancement is application of mixture of probabilistic and analytical methods, as purely analytical methods usually lead to estimates with different constants in exponents. Such approach has been already successfully adopted in e.g. \cite{BM, MS, MSZ}. Since also many arguments are geometrical, the methods presented in the paper seem to be adoptable in wider generality.

The paper is organized as follows. In Section 2 we gather  notational details as well as preliminary information about the Brownian motion, including some inequalities related to semi-group property of $p_D(t,x,y)$. Sections 3 and 4 are devoted to general lower and upper bounds, respectively, of the examined heat kernels. Section 5 deals with two-sided estimates and contains some supporting examples.

\section{Preliminaries}

\subsection{Notation}

In this paper we work on the Euclidean space $\R^n$, $n\in\N$, equipped with the standard inner product $x\cdot y$ and corresponding Euclidean metric $|x-y|$, $x,y\in \R^n$.  By $B_k(a,r)$, $k\in\N$, we denote the $k$-dimensional ball of radius $r>0$ and center $a\in\R^k$. If $k=n$, we simply write $B_n(a,r)=B(a,r)$. A set $D\subset \R^n$ is  called a domain,    if it is  open and connected. The $\mathcal{C}^{1,1}$ sets are sets whose boundary is locally a graph of a $\mathcal C^{1,1}$ function $f:\R^{n-1}\rightarrow\R$. It is well known that $\mathcal{C}^{1,1}$ sets satisfy the inner and outer ball condition, which means that for any point $z$ from the boundary $\partial D$ of the set $D$ there are two balls tangent to $D$ at $z$ such that one of them is completely included in $D$, and the other one in $D^c$. Furthermore, if $D$ is bounded, then there exists a radius $r>0$ such  that for any $z\in \partial D$ the condition is satisfied with balls of radius at least $r$.  We will denote the class of sets with such property \mbox{by $\mathcal C^{1,1}_r(\R^n)$}.

For a domain $D$ and $x\in D$ we denote the distance of $x$ to the boundary $\partial D$ of $D$ by $\delta_D(x)$. For a convex domain $D$ and  $z\in \partial D$ from its boundary,  $P_z$ stands for a hyperplane such that $P_z\subset D^c$ and $\{z\}\in P_z$. Note that $P_z$ might be not unique, but if $D$ is a $\mathcal C^{1,1}$ domain, then there is only one such hyperplane and it is tangent to $D$ at $z$. Any hyperplane $P_z$ divides the whole space $\R^n$ into two half-spaces. The one including $D$ will be denoted by $H_z$. In that case we have $\partial H_z=P_z$. For $x\in D$ and $z\in \partial D$ realising the distance of $x$ to the boundary of $D$, i.e. $\delta_D(x)=|x-z|$, we put $P_x=P_z$ and $H_x=H_z$.   Again, there might be more than one such point $z$, but this is irrelevant from our point of view, as all the results  presented in the paper are valid for any choice of $z$. For two half-spaces $H_1$ and $H_2$ we define the angle $\angle(H_1,H_2)$ between them as the angle inside $H_1\cap H_2$ between  lines $l_1\subset \partial H_1$ and $l_2\subset \partial H_2$ that are perpendicular to $\partial H_1\cap \partial H_2$. When $\angle(H_1,H_2)\leq \pi$ then the angle is equal to $\pi-\angle(v_1,v_2)$, where $v_1,v_2$ are normal vectors of $\partial H_1$ and $\partial H_2$, respectively, directed inside the set $H_1\cap H_2$. 

For $x=(x_1,...,x_n)\in \R^n$, $n\geq 2$, we denote 
$$\widetilde x=(x_2,...,x_n)\in \R^{n-1},\ \ \ \ \text{and}\ \ \ \  \ \utilde x =(x_1,...,x_{n-1})\in \R^{n-1}.$$
If $x\in \R$, then $\widetilde x$ and $\utilde x$ will be treated as $0$ in calculations. Analogously we define  $\widetilde{\utilde x}$.

To compare two positive functions $f,g$ we use notation $f\approx g$, which means that there exist  constants $c_1,c_2>1$, possibly depending on $n$, such that $c_1<f/g<c_2$ for a given range of arguments. If the constants depend on other parameters $p_1,...,p_k$, $k\in \N$, we write $f\stackrel{p_1,...,p_k}{\approx} g$.

\subsection{Brownian motion}
Let us consider $n$-dimensional Brownian motion $W = (W(t))_{t\geq0}= (W_1(t),...,W_n(t))_{t\geq0}$ starting
from $x \in \R^n$.  The global heat kernel $p(t,x,y)$ represents its  transition probability density. By $\P^x$ and $\E^x$ we denote the corresponding probability law and the expected value, respectively.

 For a domain  $D\subset \R^n$ we define   the first exit time $\tau_D$ of $W$ from $D$  by
$$\tau_D:=\inf\{t>0:W(t)\notin D\}.$$
Then by $W^D=(W^D_t)_{t\geq 0}$ we denote the Brownian motion killed  upon leaving a set $D$, which is a process equal to $W_t$ before time $\tau_D$ and at that time it  is moved to an additional state called $cemetery$.  For sufficiently regular domains $D$ (e.g. Lipschitz domains) the transition density function of $W^D$ is given  by the Dirichlet heat kernel $p_D(t,x,y)$. The relation between $p_D(t,x,y)$ and $p(t,x,y)$ is described  by the Hunt formula
\begin{eqnarray}
\label{eq:Hunt}
   p_D(t,x,y) = p(t,x,y)-\int_0^t \int_{\partial D}p(t-s,z,y)q^x_D(s,z)dsd\sigma(z)\/,
\end{eqnarray}
where $\sigma(z)$ is the surface measure on $\partial D$ and  $q_x(t,z)$ denotes the density function of the joint distribution $(\tau_D,W_{\tau_D})$ for the process $W$ starting from $x\in D$. Note that the function $p_D(t,x,y)$ is symmetric in space arguments and satisfies the Chapman-Kolmogorov identity, known also as the semi-group property, (see Theorem 2.4 in \cite{CZ})
\begin{align}\label{eq:CK}
p_D(t,x,y)=\int_{D}\kernel_D(\alpha t,x,z)\kernel_D((1-\alpha)t,z,y)\,dz,\ \ \ \ \  x,y\in D,\ t>0,\  \alpha\in(0,1).
\end{align}

For any half-space $H\subset \R^n$ the Dirichlet heat kernel $p_H(t,x,y)$ takes especially simple form. Precisely, reflection principle gives us
\begin{align*}
p_H(t,x,y)&=p(t,x,y)-p(t,x,\bar y)\\
&=p(t,x,y)\(1-e^{-\delta_H(x)\delta_H(y)}\), 
\end{align*}
where $\bar y$ is a symmetric reflection of $y$ with respect to the boundary $\partial H$ of $H$. This immediately implies 
\begin{eqnarray}\label{eq:estH}p_H(t,x,y) \approx\(1\wedge\frac{\delta_H(x)\delta_H(y)}{t}\)p(t,x,y).\end{eqnarray}

Another important property of Dirichlet heat kernels, which follows e.g. from the Hunt formula \eqref{eq:Hunt},  is their monotonicity with respect to inclusion of domains. Namely, if $D_1\subset D_2$ then
\begin{align}\label{eq:D1<D2}
p_{D_1}(t,x,y)\leq p_{D_2}(t,x,y),\ \ \ \ x,y\in D_1.
\end{align}
To see this, let us observe that  $\tau_{D_1}\leq \tau_{D_2}$ and consequently $\{\tau_{D_1}>t\}\subset \{\tau_{D_2}>t\}$. Hence,  for any borel $A\subset D_1$ we have
  \begin{align*}
  \int_A& p_{D_1} (t,x,y)dy=\p^x\left[W_t\in A, \tau_{D_1}>t\right]\leq \p^x\left[W_t\in A, \tau_{D_2}>t\right]=\int_A \kernel_{D_2}(t,x,y)dy.
  \end{align*} 

\subsection{Chapman-Kolmogorov-like inequalities}

Below we collect some inequalities related to the Chapman-Kolmogorov identity (semi-group property) for the global heat kernel $p(t,x,y)$, that help us with dealing with analogous identities in case of  the Dirichlet heat kernels and provide some  intuitions about typical trajectories of Brownian motion. The first proposition may be interpreted by saying that Brownian motion going from $x$ to $y$ in time $t$ is mostly at time $\alpha t$, $\alpha\in(0,1)$, passing through a neighbourhood of the point $(1-\alpha)x+\alpha y$ of a size comparable to $\sqrt{\alpha(1-\alpha) t}$. In fact, we can move away from $(1-\alpha)x+\alpha y$ by a distance comparable to $\sqrt{\alpha(1-\alpha) t}$. 
\begin{prop}
  \label{cor:CKlow}
  For every $r,d\geq0$ and $\alpha\in(0,1)$ we have
	\begin{align*}
	   &\int_{B(a,r)}\kernel(\alpha t,x,z)\kernel((1-\alpha)t,z,y)\,dz\geq \frac{e^{-\frac{d^2}{2\alpha(1-\alpha)t}-1}}{2^n\Gamma\(\frac {n+2}2\)} \(1\wedge \frac {r^2}{\alpha(1-\alpha)t}\)^{n/2}p(t,x,y),
	\end{align*}
where $x,y\in \R^n$, $t>0$ and $a\in \R^n$ such that $|a-((1-\alpha) x+\alpha y)|= d$.
\end{prop}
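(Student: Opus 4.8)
The plan is to reduce the statement to a single elementary Gaussian estimate via the exact Chapman--Kolmogorov identity for the free kernel $p$. First I would complete the square in the exponents: writing $m:=(1-\alpha)x+\alpha y$, a direct computation gives the algebraic identity
\[
\frac{|x-z|^2}{\alpha}+\frac{|z-y|^2}{1-\alpha}=\frac{|z-m|^2}{\alpha(1-\alpha)}+|x-y|^2,\qquad z\in\R^n,
\]
and, together with $(4\pi\alpha t)^{-n/2}(4\pi(1-\alpha)t)^{-n/2}=(4\pi t)^{-n/2}(4\pi\alpha(1-\alpha)t)^{-n/2}$, this turns the product of the two heat kernels into
\[
p(\alpha t,x,z)\,p((1-\alpha)t,z,y)=p(t,x,y)\,p\big(\alpha(1-\alpha)t,\,m,\,z\big).
\]
Consequently the left-hand side of the proposition equals $p(t,x,y)\int_{B(a,r)}p(s,m,z)\,dz$ with $s:=\alpha(1-\alpha)t$, and everything reduces to bounding from below the Gaussian mass $\int_{B(a,r)}p(s,m,z)\,dz$ of the ball $B(a,r)$, recalling that $|a-m|=d$.

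For this I would shrink the domain of integration to the smaller ball $B(a,\rho)\subset B(a,r)$ with $\rho:=r\wedge\sqrt s$; this is the choice that produces the factor $(\rho^2/s)^{n/2}=\big(1\wedge\frac{r^2}{s}\big)^{n/2}$. For $z\in B(a,\rho)$ the triangle inequality gives $|z-m|\le|z-a|+|a-m|\le\rho+d$, whence $|z-m|^2\le 2\rho^2+2d^2\le 2s+2d^2$, so on all of $B(a,\rho)$
\[
p(s,m,z)=(4\pi s)^{-n/2}e^{-|z-m|^2/(4s)}\ \ge\ (4\pi s)^{-n/2}e^{-(2s+2d^2)/(4s)}\ \ge\ (4\pi s)^{-n/2}e^{-d^2/(2s)-1}.
\]
Integrating this constant lower bound over $B(a,\rho)$, whose Lebesgue measure is $\frac{\pi^{n/2}}{\Gamma((n+2)/2)}\rho^n$, and using $(4\pi s)^{-n/2}\pi^{n/2}=(4s)^{-n/2}=2^{-n}s^{-n/2}$ together with $\rho^n s^{-n/2}=(1\wedge r^2/s)^{n/2}$ yields
\[
\int_{B(a,r)}p(s,m,z)\,dz\ \ge\ \frac{e^{-d^2/(2s)-1}}{2^n\,\Gamma\!\big(\tfrac{n+2}{2}\big)}\Big(1\wedge\frac{r^2}{s}\Big)^{n/2}.
\]
Multiplying by $p(t,x,y)$ and substituting $s=\alpha(1-\alpha)t$ is precisely the asserted inequality; the degenerate case $r=0$ is trivial since both sides vanish.

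There is no real obstacle here beyond bookkeeping: the argument is a computation. The only point that needs a little care is the choice of the restricted radius $\rho=r\wedge\sqrt s$ combined with the crude estimate $(\rho+d)^2\le 2\rho^2+2d^2$, which is what decouples the $d$-dependence from the $s$-dependence and delivers the clean exponent $-d^2/(2s)-1$ rather than one carrying a $d/\sqrt s$ cross term (the passage from $e^{-1/2}$ to the stated $e^{-1}$ in the displays above being harmless). Keeping the full radius $r$ would instead lead to the weaker factor $e^{-(r+d)^2/(4s)}$, which is not of the required form when $r\gg\sqrt s$.
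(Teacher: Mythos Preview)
Your proof is correct and follows essentially the same route as the paper: both complete the square to factor out $p(t,x,y)$ and reduce to a Gaussian-mass lower bound on $B(a,r)$, then restrict to a ball of radius comparable to $\sqrt{\alpha(1-\alpha)t}$ and use the crude inequality $(u+v)^2\le 2u^2+2v^2$ to decouple the $d$- and $s$-dependence. The only cosmetic differences are that the paper fixes coordinates so that $m=0$ before computing, and restricts the radius after a change of variables rather than before; the resulting constants are identical.
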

\begin{proof}
Without loss of generality we assume $x=(-\alpha|x-y|,0,...,0)$,  $y=((1-\alpha)|x-y|,0,...,0)$. This follows \begin{eqnarray*}
  \frac1\alpha|x-z|^2+\frac1{1-\alpha}|y-z|^2 &=& |x-y|^2 + \frac{1}{\alpha(1-\alpha)}|z|^2,\ \ \ \ \ \ z\in\R^n,
\end{eqnarray*}
and consequently
\begin{align}\nonumber
  \kernel(\alpha t,x,z)\kernel((1-\alpha)t,z,y)&= \frac{\exp\left(-\frac{|x-y|^2}{4t}\right)\exp\left(-\frac{|z|^2}{4\alpha(1-\alpha)t}\right)}{(4\pi t)^{n/2}(4\alpha(1-\alpha)\pi t)^{n/2}}\\\label{eq:pp}
  &=p(t,x,y) \frac{\exp\left(-\frac{|z|^2}{4\alpha(1-\alpha)t}\right)}{(4\alpha(1-\alpha)\pi t)^{n/2}}\/.
\end{align}
Furthermore, the special form of $x$ and $y$ gives us $(1-\alpha) x+\alpha y=0$,  which implies  
 $$|z|^2\leq\(|z-a|+|a|\)^2\leq 2\(|z-a|^2+|a|^2\)=2\(|z-a|^2+d^2\).$$
Thus, we obtain 
\begin{align*}
   &\int_{B(a,r)}\kernel(\alpha t,x,z)\kernel((1-\alpha)t,z,y)dz\\
& \geq \frac{p(t,x,y)}{(4\alpha(1-\alpha)\pi t)^{n/2}}\int_{B(a,r)}\exp\left(-\frac{|z-a|^2+d^2}{2\alpha(1-\alpha)t}\right)dz\\
& =\frac{p(t,x,y)}{(2\pi)^{n/2}}e^{-\frac{d^2}{2\alpha(1-\alpha)t}}\int_{B(0,r/\sqrt{2\alpha(1-\alpha)t})}\exp\left(-{|z|^2}\right)dz\\
& \geq \frac{p(t,x,y)}{(2\pi)^{n/2}}e^{-\frac{d^2}{2\alpha(1-\alpha)t}}e^{-1^2}\left|B\(0,1\wedge\(r/\sqrt{2\alpha(1-\alpha)t}\)\)\right|\\
&= \frac{p(t,x,y)}{2^{n/2}\Gamma\(\frac {n+2}2\)}e^{-\frac{d^2}{2\alpha(1-\alpha)t}-1}\(1\wedge \frac {r}{\sqrt{2\alpha(1-\alpha)t}}\)^{n}\\
&\geq \frac{p(t,x,y)}{2^n\Gamma\(\frac {n+2}2\)} e^{-\frac{d^2}{2\alpha(1-\alpha)t}-1}\(1\wedge \frac {r^2}{\alpha(1-\alpha)t}\)^{n/2},
\end{align*}
as required
\end{proof}
The next proposition deals with an integral over the intersection of two half-spaces, where integrands are somewhat related to the Dirichlet heat kernels of the half-spaces (cf. \eqref{eq:estH}).
\begin{prop}\label{prop:CKHH}
Let $\alpha, \beta\geq0$  and  $H_1,H_2\subset \R^n$ be two half-spaces. For $D=H_1\cap H_2$  it holds   
\begin{align*}
&\int_{D} p(t/2,x,z)p(t/2,z,y)\(\delta_{H_1}(z)\)^\alpha \(\delta_{H_2}(z)\)^\beta dz\\
&\stackrel{\alpha,\beta}{\lesssim} \,p(t,x,y)\(\sqrt t+{\delta_{H_1}\(\frac{x+y}2\)}\)^{\alpha}\(\sqrt t+{\delta_{H_2}\(\frac{x+y}2\)}\)^{\beta}.
\end{align*}
\end{prop}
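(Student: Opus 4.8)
The plan is to reduce the inequality to a Gaussian moment estimate and then exploit the fact that the distance to the boundary of a half-space is an affine function. Write $m=\frac{x+y}{2}$ for the midpoint; since the inequality is meant to be applied with $x,y\in D$, convexity of $D=H_1\cap H_2$ gives $m\in D$, and in particular $\delta_{H_1}(m),\delta_{H_2}(m)\geq 0$ are well defined.

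First I would record the elementary identity
$$p(t/2,x,z)\,p(t/2,z,y)=p(t,x,y)\,(\pi t)^{-n/2}\,e^{-|z-m|^2/t},\qquad z\in\R^n,$$
which follows from $|x-z|^2+|z-y|^2=2|z-m|^2+\tfrac12|x-y|^2$ exactly as in the computation in the proof of Proposition~\ref{cor:CKlow}, specialised to the splitting parameter $\tfrac12$. After dividing by $p(t,x,y)$, the assertion becomes equivalent to
$$\int_D (\pi t)^{-n/2}e^{-|z-m|^2/t}\,\big(\delta_{H_1}(z)\big)^\alpha\big(\delta_{H_2}(z)\big)^\beta\,dz\ \stackrel{\alpha,\beta}{\lesssim}\ \big(\sqrt t+\delta_{H_1}(m)\big)^\alpha\big(\sqrt t+\delta_{H_2}(m)\big)^\beta.$$

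For the main step, fix $i\in\{1,2\}$ and let $\nu_i$ be the unit vector normal to $\partial H_i$ pointing into $H_i$. Then $\delta_{H_i}$ coincides on $H_i$ with the affine function $w\mapsto (w-q_i)\cdot\nu_i$ for any $q_i\in\partial H_i$, so $\delta_{H_i}(z)=\delta_{H_i}(m)+(z-m)\cdot\nu_i$ for all $z\in H_i$. In particular, for $z\in D\subseteq H_i$,
$$0\leq \delta_{H_i}(z)\leq \delta_{H_i}(m)+|z-m|\leq \big(\delta_{H_i}(m)+\sqrt t\big)\Big(1+\tfrac{|z-m|}{\sqrt t}\Big),$$
the last step because all the quantities involved are nonnegative. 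Raising the estimates for $i=1,2$ to the powers $\alpha$ and $\beta$, multiplying, and substituting $z=m+\sqrt t\,w$ yields
$$\int_D (\pi t)^{-n/2}e^{-|z-m|^2/t}\big(\delta_{H_1}(z)\big)^\alpha\big(\delta_{H_2}(z)\big)^\beta\,dz\leq \big(\delta_{H_1}(m)+\sqrt t\big)^\alpha\big(\delta_{H_2}(m)+\sqrt t\big)^\beta\int_{\R^n}\pi^{-n/2}e^{-|w|^2}(1+|w|)^{\alpha+\beta}\,dw,$$
where the domain of integration has been enlarged to $\R^n$ and the remaining integral is a finite constant depending only on $n,\alpha,\beta$. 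This gives the claimed bound.

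The argument is essentially computational and I do not expect a genuine obstacle; the only points demanding a little care are that $\delta_{H_i}$ is honestly affine on each half-space — which is precisely what lets the Gaussian factor split off as a clean constant — that $D$ lies in both $H_1$ and $H_2$, so the signed-distance manipulations above really are manipulations of $\delta_{H_i}\geq 0$, and that the implicit constant depends on $\alpha,\beta$ only through the polynomial moment $\int_{\R^n}e^{-|w|^2}(1+|w|)^{\alpha+\beta}\,dw$ of the Gaussian.
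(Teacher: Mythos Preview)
Your proof is correct and follows essentially the same approach as the paper's: both use the Gaussian identity $p(t/2,x,z)p(t/2,z,y)=p(t,x,y)(\pi t)^{-n/2}e^{-|z-m|^2/t}$, the 1-Lipschitz bound $\delta_{H_i}(z)\leq \delta_{H_i}(m)+|z-m|$, and then reduce to a finite Gaussian moment after the substitution $z=m+\sqrt t\,w$. Your factorisation $\delta_{H_i}(m)+|z-m|\leq (\delta_{H_i}(m)+\sqrt t)(1+|z-m|/\sqrt t)$ is a slightly cleaner way to finish than the paper's use of $(a+b)^\gamma\approx a^\gamma+b^\gamma$, and note that the Lipschitz bound already gives $\delta_{H_i}(z)\leq \delta_{H_i}(m)+|z-m|$ for all $z\in H_i$ regardless of whether $m\in H_i$, so the extra assumption $x,y\in D$ is not actually needed.
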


\begin{proof}
Similarly as in the proof of Proposition \ref{cor:CKlow}, we assume $x=(-\frac12|x-y|,0,...,0)$,  $y=(\frac12|x-y|,0,...,0)$. Then we have $B(( x+ y)/2,r)=B(0,r)$ and  
$$\delta_{H_1}(z)\leq |z|+\delta_{H_1}\(\frac{x+y}2\),\ \ \ \ \ \ \ \delta_{H_2}(z)\leq |z|+\delta_{H_2}\(\frac{x+y}2\),$$
 as well as
\begin{eqnarray*}
 p (t/2,x,z)p(t/2,z,y)=p(t,x,y) \frac{e^{-|z|^2/t}}{(\pi t)^{n/2}}\/.
\end{eqnarray*}
This follows
\begin{align*}
&\int_{D} p(t/2,x,z)p(t/2,z,y)\(\delta_{H_1}(z)\)^\alpha \(\delta_{H_2}(z)\)^\beta dz\\
&\leq p(t,x,y)\int_{\R^n}\frac{e^{-|z|^2/t}}{(\pi t)^{n/2}}\(|z|+\delta_{H_1}\(\frac{x+y}2\)\)^{\alpha}\(|z|+\delta_{H_2}\(\frac{x+y}2\)\)^{\beta} dz\\
&= p(t,x,y)\int_{\R^n}\frac{e^{-|z|^2}}{\pi ^{n/2}}\(|z|\sqrt t+\delta_{H_1}\(\frac{x+y}2\)\)^{\alpha}\(|z|\sqrt t+\delta_{H_2}\(\frac{x+y}2\)\)^{\beta} dz\\
        &\stackrel{\alpha,\beta}{\lesssim} p(t,x,y)\(\sqrt t+{\delta_{H_1}\(\frac{x+y}2\)}\)^{\alpha}\(\sqrt t+{\delta_{H_2}\(\frac{x+y}2\)}\)^{\beta},
\end{align*}
 where the last estimate may be justified by the inequalities 
$$\frac12 \(a^{\gamma}+b^\gamma\)\leq (a+b)^{\gamma}\leq 2^\gamma \(a^{\gamma}+b^\gamma\),$$
for $a,b,\gamma >0$.
\end{proof}

\section{Upper bounds}
Our general idea of finding upper bounds is to circumscribe sets of the form $H_1\cap H_2$, where $H_1$ and $H_2$ are two half-spaces, on the domain $D$ and to use the monotonicity property \eqref{eq:D1<D2}. Due to independence of coordinates of Brownian motion, the heat kernel of $H_1\cap H_2$ is just a product of a heat kernel of a two-dimensional cone and the $n-2$-dimensional global heat kernel. However, in the literature there are neither estimates of heat kernels in cones that describe properly the exponential behaviour nor ones that are  uniform with respect to the angle between the half-spaces (see \cite{BS} for some formulae and properties of Brownian motion in cones). For this reason we provide some upper bounds that are sufficient for applications to estimation of heat kernels in  $C^{1,1}$ domains.
We deal separately with cases when the angle between the half-spaces is 
obtuse or acute. 

\begin{thm}\label{thm:upper1} Let $H_1$ and $H_2$ be two half-spaces such that $\angle(H_1,H_2)\geq \pi/2$  and  denote $D=H_1\cap H_2$. Then there is an absolute constant  $C>0$  such that for $x,y\in D$ and $t>0$ it holds 
\begin{align}\nonumber
p_D(t,x,y)\leq C&\, p(t,x,y)\times\\
\label{eq:aux1}
&\[\(1\wedge \frac{\d_D(x)\d_D(y)}{t}\)+\(1\wedge \frac{\d_{H_1}(x)\d_{H_1}(y)}{t}\)\(1\wedge \frac{\d_{H_2}(x)\delta_{H_2}(y)}{t}\)\].
\end{align}
\end{thm}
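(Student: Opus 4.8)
The plan is to exploit the fact that $D=H_1\cap H_2$ factors, up to rotation, as a product of a two-dimensional wedge and $\R^{n-2}$, so that $p_D(t,x,y)=p_{W}(t,\utilde{\utilde x},\utilde{\utilde y})\,p(t,x_{n-1,n},y_{n-1,n})$ where $W$ is the planar wedge of opening $\theta=\angle(H_1,H_2)\in[\pi/2,\pi)$; but since uniform-in-$\theta$ estimates for the wedge heat kernel are not available, I would instead argue directly. First I would set up a dichotomy based on the geometry of the segment $\overline{xy}$ relative to $\partial D$. Write $\rho=\dist(\overline{xy},\partial D)$. If $\rho$ is large compared to $\sqrt t$ — say $\rho\geq \sqrt t$ — then the ``not feeling the boundary'' bound \eqref{eq:vdB} gives $p_D(t,x,y)\approx p(t,x,y)$, while simultaneously $\delta_D(x),\delta_D(y)\gtrsim \sqrt t$ forces both minima on the right-hand side of \eqref{eq:aux1} to be comparable to $1$; so the inequality is trivial in that regime. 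The substance is therefore the regime $\rho\lesssim\sqrt t$, i.e. the segment passes near the corner or near one of the faces.

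For the main regime I would use the Chapman--Kolmogorov identity \eqref{eq:CK} with $\alpha=1/2$ together with the monotonicity \eqref{eq:D1<D2} and the half-space estimate \eqref{eq:estH}. Concretely, $p_D(t,x,z)\leq p_{H_i}(t,x,z)$ for each $i$, hence
\[
p_D(t,x,y)=\int_D p_D(t/2,x,z)\,p_D(t/2,z,y)\,dz
\leq \int_D p_{H_1}(t/2,x,z)^{1/2}p_{H_2}(t/2,x,z)^{1/2}\cdots
\]
— more usefully, one bounds $p_D(t/2,x,z)\le p(t/2,x,z)\bigl(1\wedge\frac{\delta_{H_1}(x)\delta_{H_1}(z)}{t}\bigr)^{1/2}\bigl(1\wedge\frac{\delta_{H_2}(x)\delta_{H_2}(z)}{t}\bigr)^{1/2}$ using $p_{D}\le\sqrt{p_{H_1}p_{H_2}}$ (which holds because $\mathbf 1_D=\mathbf 1_{H_1}\mathbf 1_{H_2}$ at the level of killed processes, or directly from $p_D\le\min(p_{H_1},p_{H_2})\le\sqrt{p_{H_1}p_{H_2}}$), and similarly at $z,y$. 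One then estimates the resulting integral of $p(t/2,x,z)p(t/2,z,y)$ against powers of $\delta_{H_1}(z)$ and $\delta_{H_2}(z)$ by Proposition~\ref{prop:CKHH}, which replaces $\delta_{H_i}(z)$ by $\sqrt t+\delta_{H_i}\!\bigl(\tfrac{x+y}2\bigr)$. Since $\overline{xy}\subset D$ gives $\delta_{H_i}(\tfrac{x+y}2)\le \tfrac12(\delta_{H_i}(x)+\delta_{H_i}(y))$, and the obtuse-angle hypothesis lets one compare $\delta_D$ with $\delta_{H_1}\wedge\delta_{H_2}$ up to an absolute constant (this is where $\angle\ge\pi/2$ enters: on a non-acute wedge the distance to the corner region is controlled by the distances to the two faces), the product of the four ``$1\wedge(\cdots)$'' factors collapses to an expression dominated by the bracket in \eqref{eq:aux1}.

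The step I expect to be the main obstacle is extracting, from the integral bound just described, precisely the \emph{two-term} structure $\bigl(1\wedge\frac{\delta_D(x)\delta_D(y)}{t}\bigr)+\bigl(1\wedge\frac{\delta_{H_1}(x)\delta_{H_1}(y)}{t}\bigr)\bigl(1\wedge\frac{\delta_{H_2}(x)\delta_{H_2}(y)}{t}\bigr)$ rather than something weaker: the naive application of \eqref{eq:estH} at both endpoints yields a product of the two ``$\delta_{H_i}$'' factors, which is the \emph{second} term, but recovering the first term $\bigl(1\wedge\frac{\delta_D(x)\delta_D(y)}{t}\bigr)$ — which dominates when $x$ or $y$ is near the corner where $\delta_D\ll\delta_{H_i}$ is impossible but where the product of the two half-space factors is still much larger than the true kernel — requires a separate argument. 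For that I would split the region of $z$-integration according to whether $z$ lies in a $\sqrt t$-neighbourhood of $\partial H_1$, of $\partial H_2$, or of neither, and on the ``near a single face'' pieces apply \eqref{eq:estH} for that one half-space only while using the crude $p_D\le p$ for the other, then use that near $\partial D$ one has $\delta_D(x)\approx\delta_{H_1}(x)$ or $\delta_D(x)\approx\delta_{H_2}(x)$ depending on which face is closest. Carefully tracking which of $\delta_{H_1},\delta_{H_2}$ realizes $\delta_D$ at each of $x$ and $y$, and summing the (at most four) resulting cases, produces exactly the claimed bracket; the obtuse-angle hypothesis is used repeatedly to keep all comparison constants absolute. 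Finally I would reassemble the $(n-2)$-dimensional factor, note $p(t,x_{n-1,n},y_{n-1,n})\cdot(\text{planar bound})\le C\,p(t,x,y)\cdot[\cdots]$, and conclude.
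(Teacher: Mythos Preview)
Your Chapman--Kolmogorov route is essentially the argument the paper uses for the \emph{acute} case (Theorem~\ref{thm:upper2}), where the ambient $\mathcal C^{1,1}_r$ structure supplies a fixed scale $r$ that absorbs the $\sqrt t$ coming out of Proposition~\ref{prop:CKHH}. For the pure obtuse wedge of Theorem~\ref{thm:upper1} there is no such scale, and the approach does not reach the stated bound. The gap is exactly at the point you flag as ``the main obstacle'': recovering the term $\bigl(1\wedge\frac{\delta_D(x)\delta_D(y)}{t}\bigr)$ when $x$ and $y$ are near \emph{different} faces.

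Here is a concrete obstruction. Take $\theta=\pi/2$, $H_1=\{z_1>0\}$, $H_2=\{z_2>0\}$, $t=1$, and
\[
x=(\epsilon,\epsilon'),\qquad y=(M,\epsilon),\qquad \text{with }0<\epsilon\ll\epsilon'\ll M^{-1}\ll1\ll M.
\]
Then $\delta_D(x)=\delta_D(y)=\epsilon$, so the bracket in \eqref{eq:aux1} is $\epsilon^2+(\epsilon M)(\epsilon'\epsilon)\approx\epsilon^2$. But any of the four pairings $p_D\le p_{H_i}$ on the $x$-leg and $p_D\le p_{H_j}$ on the $y$-leg, followed by Proposition~\ref{prop:CKHH}, gives at best
\[
\frac{\delta_{H_1}(x)\bigl(\sqrt t+\delta_{H_1}(\tfrac{x+y}{2})\bigr)}{t}\cdot\frac{\delta_{H_2}(y)\bigl(\sqrt t+\delta_{H_2}(\tfrac{x+y}{2})\bigr)}{t}\approx \epsilon M\cdot \epsilon= M\epsilon^2,
\]
while the other three pairings give $\epsilon'\epsilon$, $\epsilon M^3$, or $\epsilon' M^2$, all of which exceed $\epsilon^2$ by an arbitrarily large factor. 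Even the crude bounds $p_D\le p_{H_1}$ and $p_D\le p_{H_2}$ applied \emph{without} Chapman--Kolmogorov give only $\min(\epsilon M,\epsilon'\epsilon)=\epsilon'\epsilon$. Your proposed fix (splitting the $z$-integration into strips near each face) does not help: the $\sqrt t$ in Proposition~\ref{prop:CKHH} reflects genuine Gaussian spreading at the midpoint, and no decomposition of the $z$-domain recovers the product $\delta_D(x)\delta_D(y)$ when those two distances are realised by different faces.

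What the paper does instead is a double-reflection inequality. In the nontrivial case ($x$ nearest to $\partial H_1$, $y$ nearest to $\partial H_2$) one writes $\bar x$, $\bar y$ for the reflections of $x$ in $\partial H_1$ and of $y$ in $\partial H_2$, and uses (this is where $\angle(H_1,H_2)\ge\pi/2$ really enters)
\[
p_{H_1\cap H_2}(t,x,y)\le p(t,x,y)-p(t,\bar x,y)-p(t,x,\bar y)+p(t,\bar x,\bar y).
\]
An algebraic rearrangement splits the right-hand side as
\[
\frac{p_{H_1}(t,x,y)\,p_{H_2}(t,x,y)}{p(t,x,y)}\;+\;p(t,\bar x,\bar y)\Bigl(1-e^{-(\bar x-x)\cdot(\bar y-y)/2t}\Bigr),
\]
and since $|(\bar x-x)\cdot(\bar y-y)|\le 4\,\delta_D(x)\delta_D(y)$ by Cauchy--Schwarz, the second piece is $\lesssim p(t,x,y)\bigl(1\wedge\frac{\delta_D(x)\delta_D(y)}{t}\bigr)$. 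This produces the two-term structure directly, with no integration and hence no $\sqrt t$ loss. In the example above the dot product actually vanishes, and the bound collapses to the exact value $M\epsilon^2\epsilon'\,p(t,x,y)$.
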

\begin{proof}

Without loss of the generality we assume  $\d_{H_1}(x)\leq\d_{H_2}(x)$. Since $\delta_D(x)=\d_{H_1}(x)\wedge\d_{H_2}(x)$, we have $\d_D(x)=\d_{H_1}(x)$. 

If   $\d_{H_1}(y)\leq2\d_{H_2}(y)$, then $\d_D(y)\geq\frac12\d_{H_1}(y)$, and consequently
$$\d_{H_1}(x)\d_{H_1}(y)\leq2\d_D(x)\d_D(y).$$
Hence, the inequality 
$p_D(t,x,y)\leq p_{H_1}(t,x,y)$ together with the estimate \eqref{eq:estH} finish the proof in this case. 

Consider now  $\d_{H_1}(y)>2\d_{H_2}(y)$ and let $\bar x$, $\bar y$ be reflections of $x$, $y$ with respect to hyperplanes $\partial H_1$, $\partial H_2$, respectively. Since $|y-\bar y|=2\d_{H_2}(y)$, we have $\bar y\in H_1$. We may therefore repeat the argumentation of the formula (3.2) in \cite{MS} and get
\begin{align*}
p_D(t,x,y)&\leq p_{H_1\cap H_2}(t,x,y)\\
&\leq p(t,x,y)-p(t,\bar x, y) -p(t,x,\bar y)+p(t,\bar x, \bar y).
\end{align*}
Note that for $\angle(H_1,H_2)\leq \frac12\pi$ the last inequality is expected to be opposite. Next,  the right-hand side may be   rewritten as
\begin{align*}
 &p(t,x,y)\(1-\frac{p(t,\bar x, y)}{p(t, x, y)}\)\(1-\frac{p(t,x,\bar y)}{p(t,x,y)}\) \\
&\ \ \ \ \ \ \ \ \ \ \ \ \ \ \ \ \ +p(t,\bar x,\bar y)-\frac{p(t,\bar x,y)p(t,x,\bar y)}{p(t, x, y)}\\
&= \frac{p_{H_1}(t,x,y)p_{H_2}(t,x,y)}{p(t,x,y)}+\(p(t,\bar x,\bar y)-\frac{p(t,\bar x,y)p(t,x,\bar y)}{p(t, x, y)}\).
\end{align*}
By \eqref{eq:estH}, we clearly have 
$$\frac{p_{H_1}(t,x,y)p_{H_2}(t,x,y)}{p(t,x,y)}\approx p(t,x,y)\(1\wedge \frac{\d_{H_1}(x)\d_{H_1}(y)}{t}\)\(1\wedge \frac{\d_{H_2}(x)\delta_{H_2}(y)}{t}\).$$
In order to deal with the other component, let us observe
\begin{align*}
|\bar x-y|^2&=|(x-y)+(\bar x-x)|^2\\
&=|x-y|^2+\(\bar x-x\)\cdot\big((\bar x-x)+2(x-y) \big),\\
|\bar x-\bar y|^2&=|(x-y)+(\bar x-x)-(\bar y-y)|^2\\
&=|x-y|^2-2\(\bar x -x\)\cdot \(\bar y-y\)\\
&\ \ \ +\(\bar x -x\)\cdot\big( (\bar x -x)+2(x-y)\big)+\(\bar y-y\)\cdot\big( (\bar y -y)+2(y-x)\big),
\end{align*}
which follows
\begin{align*}
p&(t,\bar x,\bar y)-\frac{p(t,\bar x,y)p(t,x,\bar y)}{p(t, x, y)}=p(t,\bar x,\bar y)\(1-e^{-(\bar x-x)\cdot(\bar y-y)/2t}\).
\end{align*}
Since, by Cauchy-Schwarz inequality,
$$(\bar x-x)\cdot(\bar y-y)\leq |\bar x-x||\bar y-y|=4\d_{H_1}(x)\d_{H_2}(y)=4\d_D(x)\d_D(y),$$
we finally obtain
\begin{align*}
p(t,\bar x,\bar y)\(1-e^{-(\bar x-x)(\bar y-y)/t}\)&\leq p(t,x,y)\(1-e^{-2\d_D(x)\d_D(y)/t}\)\\
& \lesssim p(t,x,y)\(1\wedge\frac{\d_D(x)\d_D(y)}{t}\),
\end{align*}
as required.  The proof is complete.
\end{proof}
In all the subsequent results $H_x, H_y$, $x,y\in D$ stand for  two half-spaces  such that $D\subset H_x\cap H_y$ and $\delta_{H_x}(x)=\delta_D(x)$, $\delta_{H_y}(y)=\delta_D(y)$.

\begin{cor}\label{cor:upper1}  Let $D\subset \R^n$ be any convex domain. There is an absolute constant  $C>0$  such that 
\begin{align}\nonumber
p_D(t,x,y)&\leq C p(t,x,y)\[\(1\wedge \frac{\d(x)\d(y)}{t}\)+\(1\wedge \frac{\d_{H_x}(x)\d_{H_x}(y)}{t}\)\(1\wedge \frac{\d_{H_y}(x)\delta_{H_y}(y)}{t}\)\]
\end{align}
holds whenever  $\angle(H_x,H_y)\geq \frac12\pi$.
\end{cor}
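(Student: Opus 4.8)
The plan is to deduce Corollary \ref{cor:upper1} directly from Theorem \ref{thm:upper1} by the circumscription-plus-monotonicity scheme announced at the start of the section. First I would set $D' = H_x \cap H_y$. By the defining property of $H_x$ and $H_y$ we have $D \subset H_x$ and $D \subset H_y$, hence $D \subset D'$, and the monotonicity property \eqref{eq:D1<D2} gives
\[
p_D(t,x,y) \leq p_{D'}(t,x,y), \qquad x,y \in D.
\]
Since we are assuming $\angle(H_x,H_y) \geq \tfrac12\pi$, Theorem \ref{thm:upper1} applies to $D' = H_x \cap H_y$, yielding an absolute constant $C>0$ with
\begin{align*}
p_{D'}(t,x,y) \leq C\, p(t,x,y)\Bigl[\Bigl(1\wedge \tfrac{\delta_{D'}(x)\delta_{D'}(y)}{t}\Bigr) + \Bigl(1\wedge \tfrac{\delta_{H_x}(x)\delta_{H_x}(y)}{t}\Bigr)\Bigl(1\wedge \tfrac{\delta_{H_y}(x)\delta_{H_y}(y)}{t}\Bigr)\Bigr].
\end{align*}

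The only remaining point is to replace $\delta_{D'}$ by $\delta_D = \delta$ in the first term. Here I would use two observations. On one hand $D \subset D'$ forces $\delta_D(x) \leq \delta_{D'}(x)$ — no good by itself, since it points the wrong way. So instead I would argue at the level of the structure of the bound: in the proof of Theorem \ref{thm:upper1} the term $1\wedge \tfrac{\delta_{D'}(x)\delta_{D'}(y)}{t}$ arose, in the case $\delta_{H_1}(y) \le 2\delta_{H_2}(y)$, from the cruder estimate $p_{D'}(t,x,y) \le p_{H_1}(t,x,y)$ and the bound $\delta_{H_1}(x)\delta_{H_1}(y) \le 2\,\delta_{D'}(x)\delta_{D'}(y)$, while in the complementary case it came from the Cauchy–Schwarz step $(\bar x - x)\cdot(\bar y - y) \le 4\,\delta_{H_1}(x)\delta_{H_2}(y) = 4\,\delta_{D'}(x)\delta_{D'}(y)$. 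In both cases the quantity $\delta_{D'}(x)\delta_{D'}(y)$ that actually appears is bounded above by the corresponding product of half-space distances $\delta_{H_x}$, $\delta_{H_y}$, which for points of $D$ satisfy $\delta_{H_x}(x) = \delta_D(x)$ and $\delta_{H_y}(y) = \delta_D(y)$. Concretely, $\delta_{D'}(x) = \delta_{H_x}(x) \wedge \delta_{H_y}(x) \leq \delta_{H_x}(x) = \delta_D(x)$ and similarly $\delta_{D'}(y) \le \delta_{H_y}(y) = \delta_D(y)$, so $\delta_{D'}(x)\delta_{D'}(y) \le \delta_D(x)\delta_D(y) = \delta(x)\delta(y)$, and the first term in Theorem \ref{thm:upper1}'s bound is $\le 1\wedge \tfrac{\delta(x)\delta(y)}{t}$.

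Combining these two displays gives exactly
\begin{align*}
p_D(t,x,y) \leq C\, p(t,x,y)\Bigl[\Bigl(1\wedge \tfrac{\delta(x)\delta(y)}{t}\Bigr) + \Bigl(1\wedge \tfrac{\delta_{H_x}(x)\delta_{H_x}(y)}{t}\Bigr)\Bigl(1\wedge \tfrac{\delta_{H_y}(x)\delta_{H_y}(y)}{t}\Bigr)\Bigr],
\end{align*}
which is the assertion. I do not expect any serious obstacle: the content is entirely in Theorem \ref{thm:upper1} and Propositions \ref{cor:CKlow}–\ref{prop:CKHH} are not even needed here. The one thing to be careful about is the direction of the distance monotonicity — one must not carelessly write $\delta_D \le \delta_{D'}$ and then try to use it the wrong way; the correct move is $\delta_{D'}(x) \le \delta_{H_x}(x) = \delta_D(x)$, using that the half-space realizing the distance for $x$ is one of the two half-spaces cut to form $D'$. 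A minor secondary point is that $D$ being a convex domain guarantees the hyperplanes $P_x$, $P_y$ (hence $H_x$, $H_y$) exist; the $\mathcal C^{1,1}$ hypothesis is only needed later for uniqueness and for the acute-angle case, not here.
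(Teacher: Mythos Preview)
Your proposal is correct and is exactly the (implicit) argument the paper intends: apply Theorem~\ref{thm:upper1} to $D'=H_x\cap H_y$, use monotonicity \eqref{eq:D1<D2}, and replace $\delta_{D'}$ by $\delta_D$ via $\delta_{D'}(x)=\delta_{H_x}(x)\wedge\delta_{H_y}(x)\le\delta_{H_x}(x)=\delta_D(x)$ and the symmetric bound for $y$. The paragraph revisiting the internals of the proof of Theorem~\ref{thm:upper1} is unnecessary --- the one-line distance inequality you give at the end already does the job.
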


Let  us pass to the  latter main result of this section.

\begin{thm}\label{thm:upper2}Let $D\subset \R^n$ be a $\mathcal C^{1,1}_r$, $r>0$, domain. For $0<t<T$ we have 
\begin{align}\label{eq:smallangle1}
p_D(t,x,y)&\lesssim \(1+\frac T{r^2}\) p(t,x,y)\(1\wedge \frac{\d_{H_x}(x)\d_{H_x}(y)}{t}\)\(1\wedge \frac{\d_{H_y}(x)\delta_{H_y}(y)}{t}\).
\end{align}
\end{thm}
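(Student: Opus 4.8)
The plan is to reduce Theorem~\ref{thm:upper2} to the obtuse case already handled in Theorem~\ref{thm:upper1} (equivalently Corollary~\ref{cor:upper1}) by a localization argument that exploits the $\mathcal C^{1,1}_r$ hypothesis. The key geometric observation is the following: if $x,y\in D$ are such that $\angle(H_x,H_y)<\pi/2$, then the reflection points realizing $\delta_D(x)$ and $\delta_D(y)$ must lie on nearly-tangent pieces of $\partial D$, and by the inner ball condition with radius $r$ these points cannot be too far apart relative to $r$; more precisely, a small angle between the two tangent half-spaces forces $x$ and $y$ to be close (in terms of $r$), so both $\delta_{H_x}(x)\delta_{H_x}(y)/t$ and $\delta_{H_y}(x)\delta_{H_y}(y)/t$ can only be small when $t$ is bounded below in terms of $r$, and otherwise $|x-y|$ is controlled. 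I would first quantify this: show there is a constant $c>0$ such that $\angle(H_x,H_y)<\pi/2$ implies $|x-y|\le c\,r$, or more usefully that the relevant ``mass factor'' on the right-hand side of \eqref{eq:smallangle1} is bounded below by a constant times $(1+T/r^2)^{-1}$ times the trivial bound $p_D(t,x,y)\le p(t,x,y)$.

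Concretely, I would split into two regimes. \textbf{Regime 1: $t\ge \varepsilon r^2$} for a suitable small absolute $\varepsilon$. Here one uses the crude bound $p_D(t,x,y)\le p(t,x,y)$ together with the fact that, in the small-angle situation, $x$ and $y$ are within distance $O(r)$ of a common boundary point, so $\delta_{H_x}(x),\delta_{H_x}(y),\delta_{H_y}(x),\delta_{H_y}(y)$ are all $O(r)=O(\sqrt t)$; consequently each factor $1\wedge \delta_{H_x}(x)\delta_{H_x}(y)/t$ is bounded \emph{below} by a constant (depending on $\varepsilon$, hence on $T/r^2$), which is exactly the slack the constant $1+T/r^2$ in \eqref{eq:smallangle1} is there to absorb. \textbf{Regime 2: $t<\varepsilon r^2$.} Here I would use the inner ball condition directly: near $x$ (resp.\ $y$) the domain $D$ contains a ball of radius $r$ tangent at the closest boundary point, and on the time scale $t\ll r^2$ the killed Brownian motion essentially does not distinguish $D$ from this ball, or more simply from the half-space $H_x$ (resp.\ $H_y$). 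A cleaner route: use the Chapman--Kolmogorov identity \eqref{eq:CK} with $\alpha=1/2$, bound $p_D(t/2,x,z)\le p_{H_x}(t/2,x,z)$ and $p_D(t/2,z,y)\le p_{H_y}(t/2,z,y)$ via \eqref{eq:D1<D2} (using $D\subset H_x$ and $D\subset H_y$), giving
\[
p_D(t,x,y)\le \int_{H_x\cap H_y} p_{H_x}(t/2,x,z)\,p_{H_y}(t/2,z,y)\,dz.
\]
Then insert the reflection formula $p_{H_x}(t/2,x,z)\le p(t/2,x,z)\,\delta_{H_x}(x)\delta_{H_x}(z)\cdot(2/t)$-type bounds (from \eqref{eq:estH}, using $1-e^{-u}\le u$) to extract the two desired factors $1\wedge\delta_{H_x}(x)\delta_{H_x}(y)/t$ and $1\wedge\delta_{H_y}(x)\delta_{H_y}(y)/t$, with the leftover integral $\int p(t/2,x,z)p(t/2,z,y)\delta_{H_x}(z)\delta_{H_y}(z)\,dz$ estimated by Proposition~\ref{prop:CKHH} (with $\alpha=\beta=1$); the point is that in Regime~2 the midpoint quantities $\delta_{H_x}((x+y)/2)$ and $\delta_{H_y}((x+y)/2)$ are themselves $O(\sqrt t)$ because $|x-y|=O(r)$ and $\delta_{H_x}(x)=\delta_D(x)=O(r)$, so $\sqrt t+\delta_{H_x}((x+y)/2)\approx \sqrt t$ up to constants, and Proposition~\ref{prop:CKHH} then yields the stated product of truncated factors.

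The main obstacle I anticipate is the precise geometric lemma tying a small angle $\angle(H_x,H_y)$ to closeness of $x$ and $y$ (and to the boundedness of all four distance quantities by $O(r)$), uniformly over all $\mathcal C^{1,1}_r$ convex domains. One has to argue: if the closest boundary points $z_x,z_y$ to $x,y$ have tangent half-spaces meeting at an angle $\theta<\pi/2$, then, because $D$ is convex and satisfies the inner ball condition of radius $r$ at both points, the points $z_x,z_y$ cannot be separated by more than a distance comparable to $r$ (a separation much larger than $r$ would, by convexity, force the tangent planes to be nearly parallel, i.e.\ $\theta$ close to $\pi$, contradiction) and moreover $\delta_D(x),\delta_D(y)\le$ something like $r$ (again by the inner ball condition, the closest point is within an $r$-ball, forcing $\delta_D\le 2r$ say — actually $\delta_D(x)$ could be large if $D$ is fat, but then the angle would be obtuse; this needs care). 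So the real content is a compactness-free, quantitative convex-geometry estimate of the shape: \emph{$\angle(H_x,H_y)<\pi/2$ and $D\in\mathcal C^{1,1}_r$ convex $\implies$ $\max\{\delta_D(x),\delta_D(y),|x-y|\}\le C r$} for an absolute $C$. Once that is in hand, Regimes~1 and~2 above close the argument routinely, with the factor $1+T/r^2$ appearing exactly as the price of replacing genuine smallness of $t/r^2$ by a lower bound on $t/r^2$ in Regime~1.
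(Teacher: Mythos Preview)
Your Chapman--Kolmogorov step (bounding $p_D(t/2,x,z)\leq p_{H_x}(t/2,x,z)$ and $p_D(t/2,z,y)\leq p_{H_y}(t/2,z,y)$, then invoking Proposition~\ref{prop:CKHH}) is exactly the engine the paper uses. The gap is in the geometric lemma you build the case split on: the assertion that $\angle(H_x,H_y)<\pi/2$ forces $\max\{|x-y|,\delta_D(x),\delta_D(y)\}\leq Cr$ is false. Take $D=\{u\in\R^2:u_2>u_1^2\}\in\mathcal C^{1,1}_{1/2}$ and let $x,y$ lie close to the boundary near $(M,M^2)$ and $(-M,M^2)$; as $M\to\infty$ the tangent half-planes meet at an angle tending to $0$, yet $|x-y|\to\infty$ and $\delta_{H_x}(y)\approx 2M$. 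This already kills Regime~2, where you need $\delta_{H_x}\big(\tfrac{x+y}{2}\big)=O(\sqrt t)$. Regime~1 fails for an independent reason: even if all four distances were $O(r)$, that gives only an \emph{upper} bound on $\delta_{H_x}(x)\delta_{H_x}(y)/t$, not the lower bound needed to absorb the factor into the crude estimate $p_D\leq p$; and since $\delta_{H_x}(x)=\delta_D(x)$ can be sent to $0$, that factor is never uniformly bounded below. You must use at least $p_D\leq p_{H_x}$ to manufacture it.

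The geometric fact that actually holds is opposite in flavour: the inner ball of radius $r$ tangent at $y'$ lies in $D\subset H_x$, and since $n_x\cdot n_y<0$ when the angle is acute, one gets $\delta_{H_x}(y)\geq r$ and $\delta_{H_y}(x)\geq r$ --- the \emph{cross} distances are bounded \emph{below}. The paper then splits on $\delta_D(y)$ versus $r$ (taking $\delta_D(x)\leq\delta_D(y)$), not on $t$ versus $r^2$. If $\delta_D(y)>r$, then $\delta_{H_y}(x)\delta_{H_y}(y)\geq r^2$, so that factor is at least $(1+T/r^2)^{-1}$, and $p_D\leq p_{H_x}$ supplies the other factor directly via \eqref{eq:estH}. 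If $\delta_D(x),\delta_D(y)\leq r$, then $\delta_{H_x}(x)=\delta_D(x)\leq r\leq\delta_{H_x}(y)$ yields $\tfrac r2\leq\delta_{H_x}\big(\tfrac{x+y}{2}\big)\leq\delta_{H_x}(y)$, and now your CK + Proposition~\ref{prop:CKHH} argument does go through: the output $\sqrt t+\delta_{H_x}\big(\tfrac{x+y}{2}\big)$ is comparable to $(1+\sqrt T/r)\,\delta_{H_x}(y)$ (not to $\sqrt t$), which is precisely what is needed to land on the factor $\delta_{H_x}(x)\delta_{H_x}(y)/t$.
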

\begin{proof}
 Let $x',y'\in \partial D$ be tangent points of $D$ to $H_x$ and $H_y$, respectively, i.e. such that $\delta_D(x)=\delta_{H_x}(x)=|x-x'|$ and $\delta_D(y)=\delta_{H_y}(y)=|y-y'|$. Since $D$ is $\mathcal C_r^{1,1}$, there are balls of radius $r$ inside $D$ which are tangent at $x'$ and $y'$ to $ D$ (and consequently to $H_x$ and $H_y$, respectively).  If we combine this with the assumption $\angle(H_x,H_y)< \pi/2$, simple geometry shows that  
\begin{align}\label{eq:dd>r/2}
\d_{H_x}(y), \d_{H_y}(x)\geq r.
\end{align}
Due to symmetry of $p_D(t,x,y)$, we may  assume $\delta_D(x)\leq \delta_D(y)$. 
If $\delta_D(y)=\delta_{H_y}(y)>r$, then it holds $\(1\wedge \frac{\delta_{H_y}(x)\d_{H_y}(y)}{t}\)\geq \(1\wedge\frac{r^2}T\)$, and consequently, by \eqref{eq:estH} and \eqref{eq:D1<D2},
\begin{align*}
p_D(t,x,y)&\leq p_{H_x}(t,x,y)\lesssim p(t,x,y)\(1\wedge \frac{\delta_{H_x}(x)\d_{H_x}(y)}{t}\)\\
&\leq \(1+ \frac T{r^2}\) p(t,x,y)\(1\wedge \frac{\delta_{H_x}(x)\d_{H_x}(y)}{t}\)\(1\wedge \frac{\delta_{H_y}(x)\d_{H_y}(y)}{t}\),
\end{align*}
as required. 

Consider now $\delta_D(y)\leq r$. By the assumption $\delta_D(x)\leq \delta_D(y)$ we also have $\delta_D(x)\leq r$. Combining this with \eqref{eq:dd>r/2} we get $\delta_{H_x}(x)\leq \delta_{H_x}(y)$ and  $\delta_{H_y}(y)\leq \delta_{H_y}(x)$. Thus,  equality $\delta_{H_x}\(\frac{x+y}2\)=\frac12\(\delta_{H_x}(x)+\delta_{H_x}(y)\)$ gives us
\begin{align}\label{eq:R<<}
\frac r2\leq \delta_{H_x}\(\frac{x+y}2\)\leq \d_{H_x}(y),\ \ \ \ \ \frac r2\leq \delta_{H_y}\(\frac{x+y}2\)\leq \d_{H_y}(x).
\end{align}
Next, using    Chapman-Kolmogorov identity and estimating $p_D(t/2,x,z)\leq p_{H_x}(t/2,x,z)$, $p_D(t/2,z,y)\leq p_{H_y}(t/2,z,y)$  we obtain for every $i,j\in\{0,1\}$
\begin{align*}
p_D(t,x,y)&\lesssim\int_D\(1\wedge \frac{\d_{H_x}(x)\d_{H_x}\(z\)}{t}\)\(1\wedge \frac{\d_{H_y}(y)\delta_{H_y}\(z\)}{t}\)p(t/2,x,z)p(t/2,x,y)dz\lesssim I_{i,j},
\end{align*}
where
\begin{align*}
 I_{i,j}:=\int_{H_x\cap H_y}\(\frac{\d_{H_x}(x)\d_{H_x}\(z\)}{t}\)^i\(\frac{\d_{H_y}(y)\delta_{H_y}\(z\)}{t}\)^jp(t/2,x,z)p(t/2,x,y)dz.
\end{align*}
Applying Proposition \ref{prop:CKHH} and the formula \eqref{eq:R<<} we get 
\begin{align*}
I_{i,j}&\lesssim p(t,x,y)\(\frac{\delta_{H_x}(x)\(\sqrt T+\delta_{H_x}\(\frac{x+y}2\)\)}{ t}\)^i\(\frac{\delta_{H_y}(y)\(\sqrt T+\delta_{H_x}\(\frac{x+y}2\)\)}{ t}\)^j\\
&\lesssim \(1+\frac T{r^2}\)p(t,x,y)\(\frac{\delta_{H_x}(x)\delta_{H_x}\(y\)}{ t}\)^i\(\frac{\delta_{H_y}(y)\delta_{H_y}\(x\)}{ t}\)^j,
\end{align*}
which follows
\begin{align*}
p_D(t,x,y)&\lesssim \(1+\frac T{r^2}\)p(t,x,y)\min_{i,j\in\{0,1\}}\left\{\(\frac{\delta_{H_x}(x)\delta_{H_x}\(y\)}{ t}\)^i\(\frac{\delta_{H_y}(y)\delta_{H_y}\(x\)}{ t}\)^j\right\}\\
&=\(1+\frac T{r^2}\)p(t,x,y)\(1\wedge\frac{\delta_{H_x}(x)\delta_{H_x}\(y\)}{ t}\)\(1\wedge\frac{\delta_{H_y}(y)\delta_{H_y}\(x\)}{ t}\).
\end{align*}
This ends the proof.

\end{proof}

Theorem \ref{thm:upper} follows now directly from Corollary \ref{cor:upper1} and Theorem \ref{thm:upper2}.

\begin{thm}\label{thm:upper}
Let $D\subset \R^n$ be a $\mathcal C^{1,1}_r$, $r>0$, domain. There is a constant  $C=C(T,n,r)>0$  such that 
\begin{align}\nonumber
p_D(t,x,y)&\leq C p(t,x,y)\[\(1\wedge \frac{\d(x)\d(y)}{t}\)+\(1\wedge \frac{\d_{H_x}(x)\d_{H_x}\(y\)}{t}\)\(1\wedge \frac{\d_{H_y}(y)\delta_{H_y}\(x\)}{t}\)\],
\end{align}
where $x,y\in D$, $t<T$.
\end{thm}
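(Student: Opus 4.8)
The plan is to split $D$ into two regimes according to the angle $\angle(H_x,H_y)$ between the two supporting half-spaces associated with $x$ and $y$, and to invoke the two results already at hand. First, note that Corollary~\ref{cor:upper1} and Theorem~\ref{thm:upper2} together cover all configurations: the former applies whenever $\angle(H_x,H_y)\geq \pi/2$ and the latter whenever $\angle(H_x,H_y)<\pi/2$ (here one uses that $D\subset \mathcal C^{1,1}_r$, which is needed for the acute case). So the entire content of the theorem reduces to checking that each of the two bounds implies the claimed one, with a constant of the advertised form $C=C(T,n,r)$.

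In the obtuse case, Corollary~\ref{cor:upper1} gives exactly the right-hand side we want (with an absolute constant), modulo one cosmetic point: the corollary is stated with $\delta_{H_y}(x)\delta_{H_y}(y)$ in the second product, whereas the theorem writes $\delta_{H_y}(y)\delta_{H_y}(x)$ — these are literally the same quantity, so nothing is to be done. Thus for $\angle(H_x,H_y)\geq\pi/2$ the estimate holds with an absolute constant, in particular with a constant depending only on $n$ (and trivially $\leq C(T,n,r)$). In the acute case, Theorem~\ref{thm:upper2} gives
\begin{align*}
p_D(t,x,y)&\lesssim \(1+\tfrac T{r^2}\)\,p(t,x,y)\(1\wedge \tfrac{\d_{H_x}(x)\d_{H_x}(y)}{t}\)\(1\wedge \tfrac{\d_{H_y}(x)\delta_{H_y}(y)}{t}\),
\end{align*}
which is the second summand of the bracket in the theorem (again with the same factor, just reordered), so it is dominated by the full bracket. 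Here the constant is of the form (absolute)$\cdot(1+T/r^2)$, hence of the form $C(T,n,r)$.

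Putting the two cases together, in either regime
\[
p_D(t,x,y)\leq C\,p(t,x,y)\[\(1\wedge \tfrac{\d_D(x)\d_D(y)}{t}\)+\(1\wedge \tfrac{\d_{H_x}(x)\d_{H_x}(y)}{t}\)\(1\wedge \tfrac{\d_{H_y}(y)\delta_{H_y}(x)}{t}\)\]
\]
with $C=C(T,n,r):=\max\{C_{\mathrm{obtuse}},\,C_{\mathrm{acute}}(1+T/r^2)\}$, which is exactly the assertion. The only genuinely substantive inputs — the half-space reflection bound, the Chapman–Kolmogorov splitting, and the geometric fact \eqref{eq:dd>r/2} that the inner ball condition forces $\delta_{H_x}(y),\delta_{H_y}(x)\geq r$ in the acute case — have already been discharged in Theorem~\ref{thm:upper1}, Corollary~\ref{cor:upper1} and Theorem~\ref{thm:upper2}. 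Consequently this last step is essentially bookkeeping, and I do not expect any real obstacle; the only point that needs a sentence of justification is that the dichotomy ``$\angle\geq\pi/2$ or $\angle<\pi/2$'' is exhaustive and that $\mathcal C^{1,1}_r$-regularity is exactly what licenses applying Theorem~\ref{thm:upper2} in the second alternative.
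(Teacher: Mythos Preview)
Your proposal is correct and matches the paper's own argument exactly: the paper states that Theorem~\ref{thm:upper} ``follows now directly from Corollary~\ref{cor:upper1} and Theorem~\ref{thm:upper2}'', i.e.\ precisely the obtuse/acute angle dichotomy you describe. Your additional remarks about the form of the constant and the exhaustiveness of the two cases are accurate bookkeeping, and no further ideas are needed.
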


Using the inequality $\delta_{H_x}(y)\leq \(\delta_{H_x}\(x\)+\delta_{H_x}(y)\)\leq  2\delta_{H_x}\(\frac{x+y}2\)$ we obtain another bound, which will be used in the sequel.

\begin{cor}\label{cor:mainupper} Let $D\subset \R^n$ be a $\mathcal C^{1,1}_r$, $r>0$, domain. There is a constant  $C=C(T,n,r)>0$  such that 
\begin{align*}
&p_D(t,x,y)\\
&\leq C p(t,x,y)\[\(1\wedge \frac{\d(x)\d(y)}{t}\)+\(1\wedge \frac{\d_{H_x}(x)\d_{H_x}\(\frac{x+y}2\)}{t}\)\(1\wedge \frac{\d_{H_y}(y)\delta_{H_y}\(\frac{x+y}2\)}{t}\)\],
\end{align*}
where $x,y\in D$, $t<T$.
\end{cor}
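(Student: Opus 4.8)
The plan is to obtain Corollary \ref{cor:mainupper} directly from Theorem \ref{thm:upper} by replacing $\delta_{H_x}(y)$ and $\delta_{H_y}(x)$ with the distances evaluated at the midpoint $\frac{x+y}{2}$. The key observation is that on the half-space $H_x$ the function $z\mapsto\delta_{H_x}(z)$ is the distance to the affine hyperplane $\partial H_x$, hence it is an affine function of $z$; since $x,y\in D\subset H_x$, this gives
$$\delta_{H_x}\!\left(\frac{x+y}{2}\right)=\frac12\big(\delta_{H_x}(x)+\delta_{H_x}(y)\big),$$
and in particular, because $\delta_{H_x}(x)\geq0$,
$$\delta_{H_x}(y)\leq\delta_{H_x}(x)+\delta_{H_x}(y)=2\,\delta_{H_x}\!\left(\frac{x+y}{2}\right),$$
and symmetrically $\delta_{H_y}(x)\leq 2\,\delta_{H_y}\!\left(\frac{x+y}{2}\right)$.

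First I would insert these two inequalities into the product term on the right-hand side of Theorem \ref{thm:upper}. Using monotonicity of $s\mapsto 1\wedge s$ together with the elementary bound $1\wedge(2s)\leq 2(1\wedge s)$ for $s\geq0$, one gets
$$\left(1\wedge\frac{\delta_{H_x}(x)\,\delta_{H_x}(y)}{t}\right)\leq 2\left(1\wedge\frac{\delta_{H_x}(x)\,\delta_{H_x}\!\left(\frac{x+y}{2}\right)}{t}\right)$$
and likewise for the factor involving $H_y$. Multiplying the two, the product term in Theorem \ref{thm:upper} is bounded by $4$ times the product term in the statement, while the first term $1\wedge\frac{\delta(x)\delta(y)}{t}$ is left untouched. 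Absorbing the constant $4$ into $C=C(T,n,r)$ then yields the claimed inequality for all $x,y\in D$ and $t<T$.

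I do not expect any real obstacle here: the only points that deserve a sentence are the affineness of $\delta_{H_x}$ restricted to $H_x$, which is immediate from $H_x$ being a half-space, and the trivial bound $1\wedge(2s)\leq 2(1\wedge s)$. The statement is thus an immediate corollary of Theorem \ref{thm:upper}; it is recorded separately only because the midpoint formulation is the one that matches the lower bounds \eqref{eq:main2}--\eqref{eq:main3} and is what gets used in the two-sided estimates of Section~5.
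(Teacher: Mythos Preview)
Your proposal is correct and follows exactly the same route as the paper: the paper derives the corollary in a single sentence preceding its statement, namely ``Using the inequality $\delta_{H_x}(y)\leq \big(\delta_{H_x}(x)+\delta_{H_x}(y)\big)\leq 2\,\delta_{H_x}\!\left(\frac{x+y}2\right)$ we obtain another bound,'' which is precisely your affineness observation; you have merely spelled out the elementary step $1\wedge(2s)\leq 2(1\wedge s)$ that the paper leaves implicit.
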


\section{Lower bounds}
We start this section with a lower bound for heat kernels of a very specific set, which is so called half-capsule. Namely, for $L\geq R>0$ we define
\begin{equation}\label{eq:S}
J_{R,L}=B_n(0,R)\cup \((0,L)\times B_{n-1}(0,R)\).
\end{equation}
Roughly speaking, $J_{R,L}$ is a cylinder of  radius $R$ and  height $L$ with a hemisphere of radius $R$ attached to one on the bases of the cylinder. 
\begin{lem} \label{lem:S1L}
Let  $L>3\sqrt t$ and $x=(L-\sqrt t,0,0,...,0)$. There is a constant $C$ depending only on $n$  such that for $0<s\leq t$ and  $y\in J_{\sqrt t, L}$ such that $y_1\leq 0$ we have
\begin{align}\label{eq:S1L}
p_{J_{\sqrt t,L}}(s,x,y)\geq C\(1\wedge\frac{\delta_{j_{\sqrt t,L}}(y)\sqrt t}{s}\)p(s,x,y).
\end{align}
\end{lem}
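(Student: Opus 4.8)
The plan is to reduce the estimate on the half-capsule $J_{\sqrt t, L}$ to the already-understood half-space geometry near the point $y$, using the semi-group property to ``travel'' the Brownian motion from $x$ through the cylindrical part to a neighbourhood of $y$, and collecting the boundary factor only at the very last step. Throughout, write $D = J_{\sqrt t, L}$ and set $R=\sqrt t$.

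\textbf{Step 1: Isolating the boundary factor at $y$.} Since $y_1 \le 0$, the point $y$ lies in the hemispherical end $B_n(0,R)$ of $D$; there the inner ball condition with radius $R$ holds, so there is a ball $B(w, R) \subset D$ tangent to $\partial D$ at the projection point of $y$, with $y$ on the radius. Equivalently, $D$ contains a half-space-like region near $y$ at scale $R$. I would pick an intermediate point $z_0$ at distance $\approx R$ from $y$ (e.g. the centre $w$, or a point of the form $z_0 = (0,\dots,0)$ shifted along the inward normal at $y$), lying on the axis-side so that the segment from $z_0$ to $y$ stays in $D$ and $\delta_D(z_0) \gtrsim R$. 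Using \eqref{eq:CK} with $\alpha = 1/2$ (or some fixed split), $p_D(s,x,y) \ge \int_{B(z_0,cR)} p_D(s/2,x,z)\, p_D(s/2,z,y)\, dz$, restricting to a ball of radius $cR$ around $z_0$ on which $\delta_D(z) \gtrsim R$ still.

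\textbf{Step 2: The short leg $z \to y$.} On $B(z_0, cR)$, monotonicity \eqref{eq:D1<D2} downward is the wrong direction, so instead I use the inner ball condition: $B(w,R) \subset D$, hence $p_D(s/2,z,y) \ge p_{B(w,R)}(s/2,z,y)$. On this fixed ball of radius $R = \sqrt t \gtrsim \sqrt s$, and with $z$ ranging over a set of points at definite distance from $\partial B(w,R)$, one has $p_{B(w,R)}(s/2,z,y) \gtrsim \bigl(1\wedge \frac{\delta_{B(w,R)}(y)\sqrt t}{s}\bigr) p(s/2,z,y)$ — this is exactly the half-space/ball lower bound \eqref{eq:estH} (or \eqref{eq:MS}), rescaled; since $\delta_{B(w,R)}(y) \approx \delta_D(y)$ (both realised along the same normal, as $y$ sits in the spherical cap) this produces the desired factor $\bigl(1\wedge \frac{\delta_D(y)\sqrt t}{s}\bigr)$.

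\textbf{Step 3: The long leg $x \to z$.} It remains to show $\int_{B(z_0,cR)} p_D(s/2,x,z)\,p(s/2,z,y)\,dz \gtrsim p(s,x,y)$. Here the key geometric point is that $x = (L-\sqrt t, 0,\dots,0)$ and $z_0$ both sit essentially on the axis of the cylinder $(0,L)\times B_{n-1}(0,R)$, and the whole tube $[0, L]\times B_{n-1}(0,R/2)$, say, lies in $D$ with distance $\gtrsim R = \sqrt t \gtrsim \sqrt s$ to $\partial D$. So along the segment from $x$ to $z_0$ the Brownian motion, conditioned to go from $x$ to $z$ in time $s/2$, stays (with probability bounded below) in this tube — this is where Proposition \ref{cor:CKlow} enters: it guarantees that the contribution to $p(s/2,x,z)$ from paths confined to a tube of radius $\approx \sqrt s$ around the straight line is a fixed fraction of $p(s/2,x,z)$, hence $p_D(s/2,x,z) \gtrsim p(s/2,x,z)$ uniformly for $z \in B(z_0,cR)$. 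Combined with Proposition \ref{cor:CKlow} again (applied with $a = z_0$, $d \approx$ distance from $z_0$ to the midpoint $(x+z\text{-range})$, and $r \approx cR \gtrsim \sqrt s$, so the factor $(1\wedge r^2/(\alpha(1-\alpha)s))^{n/2}$ is $\approx 1$), the integral $\int_{B(z_0,cR)} p(s/2,x,z)\,p(s/2,z,y)\,dz \gtrsim p(s,x,y)$, and the factor of $2$ between $p(s/2,z,y)$ and $p_D(s/2,z,y)$'s normalisation absorbs into the constant.

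\textbf{Main obstacle.} The delicate part is Step 3: showing $p_D(s/2,x,z) \gtrsim p(s/2,x,z)$ with a constant independent of $L$ (which may be arbitrarily large) and of $s \le t$. One must verify that the straight segment from $x$ to any $z \in B(z_0, cR)$ stays at distance $\gtrsim \sqrt s$ from $\partial D$ along its whole length — this uses that both endpoints are within $O(\sqrt t)$ of the axis and that $|x - z| \le L$, but the relevant transverse fluctuations of the bridge are only $O(\sqrt s) = O(\sqrt t)$, matching the tube radius $R/2 = \sqrt t/2$ up to the constant $c$; one fixes $c$ small enough that this works. A clean way to package this is to apply Proposition \ref{cor:CKlow} iteratively (or the tube estimate for Brownian bridges), but since the tube has constant cross-section, a single application of the confinement estimate with the bridge decomposition suffices, and the only real care needed is tracking that all the comparison radii are $\gtrsim \sqrt s$ so every $(1 \wedge \cdot)$ factor is bounded below.
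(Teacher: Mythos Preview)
There is a genuine gap in Step~3, and it is fatal for the approach as stated. Your intermediate region $B(z_0,cR)$ sits near the hemispherical end of $D$ (at distance $O(\sqrt t)$ from the origin), while your Chapman--Kolmogorov split is at the fixed fraction $\alpha=\tfrac12$. The Brownian bridge from $x$ to $y$ at time $s/2$ is concentrated near $\tfrac{x+y}{2}$, whose first coordinate is at least $(L-2\sqrt t)/2$. Hence $d:=\bigl|z_0-\tfrac{x+y}{2}\bigr|\gtrsim L/2$, and Proposition~\ref{cor:CKlow} only gives
\[
\int_{B(z_0,cR)}p(s/2,x,z)\,p(s/2,z,y)\,dz\ \geq\ c_n\,e^{-2d^2/s}\,p(s,x,y)\ \approx\ e^{-L^2/(2s)}\,p(s,x,y),
\]
which is not bounded below uniformly in $L$. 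The difficulty is longitudinal, not transverse: your observation that the bridge's transverse fluctuations are $O(\sqrt s)$ is correct but irrelevant, since the problem is that at time $s/2$ the bridge is longitudinally nowhere near $z_0$. Trying a variable split $\alpha\approx 1-\sqrt t/L$ so that $(1-\alpha)x+\alpha y$ lands near $y$ does not rescue the argument either: the corresponding point on $\overline{xy}$ need not be at distance $\gtrsim\sqrt t$ from $\partial D$ (when $y$ is near the equator of the hemisphere the segment $\overline{xy}$ grazes the cylindrical wall), so Step~2's inner-ball comparison is unavailable there.

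The paper avoids this obstacle by an essentially different device. Rather than a fixed-time split, it applies the strong Markov property at the first hitting time $\tau_1$ of the hyperplane $\{z_1=0\}$: before $\tau_1$ the killed process sees only the cylinder, after $\tau_1$ only the cap. On the cap one compares (via the ball estimate \eqref{eq:MS}) with a half-space and then with a slightly smaller half-infinite cylinder $R^{(2)}_L$, obtaining $p_{J_{1,L}}(s,x,y)\gtrsim p_{R^{(2)}_L}(s,x,y)$ for $y$ in a torus near the equator. The point is that $p_{R^{(2)}_L}$ factors as $p_{(-\infty,L)}(s,x_1,y_1)\cdot p_{I_0}(s,\widetilde x,\widetilde y)$, so the long one-dimensional trip of length $\approx L$ is handled \emph{exactly} by the half-line kernel (which is $\approx p(s,x_1,y_1)$ here), with no exponential loss, while the boundary factor comes from the transverse disk kernel. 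This longitudinal/transverse decoupling is precisely the ingredient your sketch is missing.
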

\begin{proof}
The scaling property ofBrownian motion gives us
$$p_{J_{\sqrt t,L}}(s,x,y)=p_{J_{1, L/\sqrt t}}\(\frac st,\frac{x}{\sqrt t},\frac y{\sqrt t}\),$$
which, together with the equality $\delta_{J_{\sqrt t,L}}(y)=\sqrt t\delta_{J_{1,L/\sqrt t}}\(\frac y{\sqrt t}\)$, allows us  to consider only the case $t=1$.  Without loss of generality we also assume $y=(y_1,y_2,0,...,0)$ with $y_1\leq0$ and $y_2\in [0,1)$, which is justified by the  rotational invariance of the set $J_{1,L}$ with respect to the  $Ox_1$ axis. 

Recall the notation
$\R^n\ni x\longrightarrow \widetilde x=(x_2,x_3,...,x_n)\in\R^{n-1}$
and let
$$\mathbb T=\left\{x\in\R^n: \(|\widetilde y|-\tfrac34\)^2+y_1^2<\(\tfrac14\)^2\right\}\subset J_{1,L} $$
be a torus tangent to $J_{1,L}$ at $\{0\}\times \partial B_{n-1}(0,1)$, and let  $\widetilde{ \mathbb T}:=\mathbb T\cup \((-\frac14,\frac14)\times B_{n-1}(0,\frac34)\)$ be the set of  all convex combinations of  points from $\mathbb T$. Alternatively, we may define $\widetilde{ \mathbb T}$ as
$$\widetilde{ \mathbb T}=\bigcup_{z\in\{0\}\times B_{n-1}(0,\frac34)}B_n\(z,\tfrac14\),$$
which may better depict properties of this set.

The main step of the proof is to show that the assertion holds for $y\in\widetilde {\mathbb T}\cap\{y_1<0\}$. For $y\in \widetilde {\mathbb T}\bs  {\mathbb T}\subset (-\frac14,\frac14)\times B_{n-1}(0,\frac34)$ it holds $\delta_{J_{1,L}}(y)>\sqrt{5/8}$. Combining this with $\delta_{J_{1,L}}(x)=1$ and convexity of $J_{1,L}$, one may conclude  \eqref{eq:S1L} from \eqref{eq:vdB}. 

For $y\in \mathbb T\cap\{y_1<0\}$ the argument is  more complicated.
Let $\tau_1$ be the first time of hitting the hyperplane $\{z\in\R^n:z_1=0\}$ by the Brownian motion $W$  starting from $x$, i.e.
$$\tau_1:=\inf\{u>0:W_1(u)=0\}.$$
Then, for any Borel set $A\subset\{z\in J_{1,L}:z_1\leq0\}$ the inclusion $\{W(s)\in A\}\subset \{s\geq\tau_{1}\}$, $s>0$, and Strong Markov property give us
\begin{align}\notag
\int_Ap_{J_{1,L}}(s,x,z)dz
&=\p^x\(W(s)\in A, s<\tau_{J_{1,L}}\)\\\notag
&=\p^x\(W(s)\in A, s<\tau_{J_{1,L}};s\geq\tau_{1}\)\\\notag
&=\E^x\[s<\tau_{J_{1,L}};s\geq\tau_{1};\E^{W(u)}\[s-u<\tau_{J_{1,L}};W(s-u)\in A\]_{u=\tau_{1}}\]\\\label{eq:longMarkov}
&=\E^x\[s<\tau_{J_{1,L}};s\geq\tau_{1};\int_Ap_{J_{1,L}}(s-\tau_{1},W(\tau_{1}),z)dz\],
\end{align}
which implies
\begin{align}
\label{eq:shortMarkov}
&p_{J_{1,L}}(s,x,y)=\E^x\[s<\tau_{J_{1,L}};s\geq\tau_{1};p_{J_{1,L}}(s-\tau_{1},W(\tau_{1}),z)\],
\end{align}
where $y\in \{z\in J_{1,L}:z_1\leq0\}$. Let us now  introduce another few sets. First, we denote 
\begin{align*}
B_0&= B((0,\tfrac14,0,...,0), \tfrac34),\\
I_0&= B_{n-1}((\tfrac14,0,...,0), \tfrac34).
\end{align*}
The relation between $B_0$ and $I_0$ is that $B_0\cap\{x_1=0\}=\{0\}\times I_0$. Furthermore, we define
\begin{align*}
R^{(1)}_L&:=(-\infty,L)\times B_{n-1}(0,1),\\
R^{(2)}_L&:=(-\infty,L)\times I_0,\\
H&:=\{x\in \R^n:x_2>1\}.
\end{align*}
The set $R^{(1)}_L$ is an extension of $J_{1,L}$ into a half-infinite cylinder, $R^{(2)}_L$ is another  half-infinite cylinder which is contained in $R^{(1)}_L$  and $H$ is a half-space tangent to both of the cylinders at $(-\infty,L)\times\{(1,0,...,0)\}\subset \R^n$.
The crucial properties of these sets are 
\begin{align}
\label{eq:dRRH}
\delta_{R^{(1)}_L}(x)\approx \delta_{R^{(2)}_L}(x)\approx\delta_H(x)\approx 1,
\end{align}
and   
\begin{eqnarray}
\label{eq:dBdR}
\delta_{B(0,1)}(z)&\approx& \delta_{R^{(1)}_L}(z)\approx \delta_H(z),\ \ \ \ \ \ \ \text{ for } z\in B_0.
\end{eqnarray}
Bounds in \eqref{eq:dRRH} are clear while bounds in \eqref{eq:dBdR} follow from the below-given calculations:
\begin{align}\notag
\delta_H(z)&\geq\d_{R^{(1)}_L}(z)\geq \delta_{B(0,1)}(y)=1-|z|\geq \tfrac12\(1-|z|^2\)\\\notag
&=\tfrac12\[\tfrac12\(1-z_2\)-\(z_1^2+\(z_2-\tfrac14\)^2+...+z_n^2-\(\tfrac34\)^2\)\]\\\notag
&=\tfrac14(1-z_2)+\tfrac12\[\(\tfrac34\)^2-\left|z-\(0,\tfrac14,0,...,0\)\right|^2\]\\\label{eq:distances}
&>\tfrac14(1-z_2)=\tfrac14 \d_{H}(z).
\end{align}
Our goal in this part of the proof is to show that $p_{J_{1,L}}(s,x,y)\gtrsim p_{R^{(2)}_L}(s,x,y)$ holds for $y\in \mathbb T\cap\{y_1\leq 0\}$ (keeping in mind the special form of $y=(y_1,y_2,0,...,0)$) and $0<s<1$. Since  $J_{1,L}\cap \{x\in \R^n:x_1>0\}=R^{(1)}_L\cap \{x\in \R^n:x_1>0\}$, then $\{s<\tau_{J_{1,L}};s\geq\tau_{1}\}=\{s<\tau_{R^{(1)}_L};s\geq\tau_{1}\}$ so we may change the condition $s<\tau_{J_{1,L}}$ into $s<\tau_{R^{(1)}_L}$ in \eqref{eq:shortMarkov}. Furthermore, adding also the condition $W(\tau_1)\in \{0\}\times I_0$ under the expectation we arrive at  
\begin{align}\label{eq:Hunt2}
&p_{J_{1,L}}(s,x,y)\geq\E^x\[s<\tau_{R^{(1)}_L};s\geq\tau_{1};W(\tau_1)\in \{0\}\times I_0;p_{J_{1,L}}(s-\tau_{1},W(\tau_{1}),y)\].
\end{align}
Observe now  that the assumption $y=(y_1,y_2,0,...,0)\in  {\mathbb T}$ and the condition $W(\tau_1)\in \{0\}\times I_0$ imply  $y,W(\tau_1)\in  B_0$.
Consequently, by \eqref{eq:D1<D2}, (\ref{eq:MS}),  \eqref{eq:dBdR} and \eqref{eq:estH} we get for $0<s\leq1$ 
\begin{align*}
p_{J_{1,L}}(s,W(\tau_1),y)&\geq p_{B(0,1)}(s,W(\tau_1),y)\\
&\gtrsim\(1\wedge\frac{ \delta_{H}(W(\tau_1))\delta_{H}(y)}s\)p(s,W(\tau_1),y)\\
&\approx p_H(s,W(\tau_1),y)\geq p_{R_L^{(1)}}(s,W(\tau_1),y).
\end{align*}
Applying this to \eqref{eq:Hunt2}, we obtain 
\begin{align*}
&p_{J_{1,L}}(s,x,y)\gtrsim\E^x\[s<\tau_{R^{(1)}_L};s\geq\tau_{1};W(\tau_1)\in \{0\}\times I_0;p_{R_L^{(1)}}(s-\tau_{1},W(\tau_{1}),y)\].
\end{align*}
The inclusion $R^{(2)}_L\subset R^{(1)}_L$ imples $\{s<\tau_{R^{(2)}_L}\}\subset \{s<\tau_{R^{(1)}_L}\}$ and $p_{R^{(1)}_L}(s-\tau_{1},W(\tau_{1}),y)\geq p_{R^{(2)}_L}(s-\tau_{1},W(\tau_{1}),y)$.  Hence
\begin{align*}
&p_{J_{1,L}}(s,x,y)\gtrsim \E^x\[s<\tau_{R^{(2)}_L};s\geq\tau_{1};W(\tau_1)\in \{0\}\times I_0;p_{R^{(2)}_L}(s-\tau_{1},W(\tau_{1}),y)\].
\end{align*}
Furthermore,  the condition $W(\tau_1)\in \{0\}\times I_0$ is always satisfied on the set $\{s<\tau_{R^{(2)}_L}\}$, so it may be removed. Thus, repeating argument from \eqref{eq:longMarkov}, we conclude
\begin{align*}
p_{J_{1,L}}(s,x,y)&\gtrsim \E^x\[s<\tau_{R^{(2)}_L};s\geq\tau_{1};p_{R^{(2)}_L}(s-\tau_1,W(\tau_1),y)\]\\
&=p_{R^{(2)}_L}(s,x,y),
\end{align*}
as required. Next we will show that $p_{R^{(2)}_L}(s,x,y)$ admits  lower estimate from \eqref{eq:S1L}.
Cylindrical form of $R^{(2)}_L$ combined with \eqref{eq:estH} and \eqref{eq:MS} give us
\begin{align*}
p_{R^{(2)}_L}(s,x,y)&=p_{(-\infty,L)}(s,x_1,y_1)p_{I_0}(s,\widetilde x, \widetilde y)\\
&\gtrsim \(1\wedge \frac{(L-x_1)(L-y_1)}{s}\)\(1\wedge\frac{\delta_{I_0}(\widetilde x)\delta_{I_0}(\widetilde y)}{s}\)p(s,x,y)\\
&= \(1\wedge\frac{\delta_{I_0}(\widetilde y)}{s}\)p(s,x,y).
\end{align*}
Since $y=(y_1,y_2,0,...,0)\in \mathbb T$, we have $y\in B_n((0,\frac34,0,...,0),\frac14)$, and, analogously as in \eqref{eq:distances} (or by rescaling), one can show that $\delta_{I_0}(\widetilde y)\approx \delta_{J_{1,L}}(y)$, which completes the proof for $y\in \widetilde{\mathbb T}$.

Finally, let us consider any $y\in J_{1,L}$ such that $y_1\leq0$, as in the assertion.  Denote by $m=(m_1,...,m_n)$ the point on the interval $\overline{xy}$ such that $m_1=0$. Due to the special form of $x$ and $y$ we have $ m=(0,m_2, 0,...,0)$ with $m_2\in[0,1)$. Next, we put $\alpha_{xy}=\frac{|m-y|}{|x-y|}$ and  denote 
$$ m'=m-\tfrac12\sqrt{\alpha_{xy}s}(0,1,0,...,0)=(0,m_2-\tfrac12\sqrt{\alpha_{xy}s}, 0,...,0).$$
Since  $\alpha_{xy}<\frac1{L-1}<\frac12$, we have  $B( m',\frac14\sqrt{\alpha_{xy}s})\subset \widetilde {\mathbb T}$, and  for all $z\in B(m',\frac14\sqrt{\alpha_{xy}s})$ it holds
\begin{align*}
\delta_{J_{1,L}}(z)&\geq \delta_{J_{1,L}}(m')-\frac14\sqrt{\alpha_{xy}s}\\
&\geq\(1-(m'_2+\tfrac14\sqrt{\alpha_{xy}s})\)\wedge \((m'_2-\tfrac14\sqrt{\alpha_{xy}s})-(-1)\)\\
&\geq  \(1-m_2+\tfrac14\sqrt{\alpha_{xy}s}\)\wedge \(1-\tfrac34\sqrt{\alpha_{xy}s}\)\\
&\geq  \(\delta_{J_{1,L}}(m)+\tfrac14\sqrt{\alpha_{xy}s}\)\wedge \(\tfrac14\)\\
&\geq\frac18\(\delta_{J_{1,L}}(m)+\tfrac14\sqrt{\alpha_{xy}s}\).
\end{align*}
Furthermore, we clearly have $\delta_{J_{1,L}}(m)\geq \delta_{J_{1,L}}(y)$ and, by intercept theorem, $\delta_{J_{1,L}}(m)\geq \alpha_{xy}\delta_{J_{1,L}}(x)=\alpha_{xy}$, which eventually gives us 
$$\delta_{J_{1,L}}(z)\gtrsim w(x,y):=  \sqrt{\alpha_{xy}s}+\delta_{J_{1,L}}(y)+\alpha_{xy},\ \ \ \ \ z\in B(m',\frac14\sqrt{\alpha_{xy}s}).$$
Consequently, from previous case we have for $z\in \mathbb{ \widetilde{ T}}\cap \{z_1<0\}$ 
\begin{align*}
p_{J_{1,L}}((1-\alpha_{xy})s,x,z)&\gtrsim p(t,x,z)\(1\wedge\frac{\delta_{J_{1,L}}(z)}{(1-\alpha_{xy})s}\)\gtrsim p(t,x,z)\(1\wedge\frac{w(x,y)}s\).
\end{align*}
Additionally,  for the same range of $z$, \eqref{eq:MS} implies
\begin{align*}
p_{J_{1,L}}(\alpha_{xy}s,z,y)&\geq p_{B(0,1)}(\alpha_{xy}s,z,y)\gtrsim  p(\alpha_{xy} s,z,y)\(1\wedge\frac{\delta_{J_{1,L}}(y)w(x,y)}{\alpha_{xy} s}\).
\end{align*}
Then, by Chapman-Kolmogorov identity and Corollary \ref{cor:CKlow}, we get
\begin{align*}
p_{J_{1,L}}(s,x,y)&\geq \int_{B( m',\sqrt{\alpha_{xy}s})\cap \{z_1<0\}}p_{J_{1,L}}((1-\alpha_{xy})s,x,z)p_{J_{1,L}}(\alpha_{xy}s,z,y)dz\\
&\geq \(1\wedge\frac{w(x,y)}s\)\(1\wedge\frac{\delta_{J_{1,L}}(y)w(x,y)}{\alpha_{xy} s}\)\\
&\ \ \ \ \  \int_{B( m'',\frac12\sqrt{\alpha_{xy}s})}p((1-\alpha_{xy})s,x,z)p(\alpha_{xy}s,z,y)dz\\
&\gtrsim\(1\wedge\frac{w(x,y)}s\)\(1\wedge\frac{\delta_{J_{1,L}}(y)w(x,y)}{\alpha_{xy} s}\)p(s,x,y),
\end{align*} 
where
$$ m''=m'-\tfrac12\sqrt{\alpha_{xy}s}(1,0,...,0)=(-\tfrac12\sqrt{\alpha_{xy}s},m_2-\tfrac12\sqrt{\alpha_{xy}s}, 0,...,0).$$
In order to finish the proof, we need to show that the above product of two minima is greater (up to a constant factor) than $\(1\wedge\frac{\delta_{J_{1,L}(y)}}{s}\)$. If the right-hand side minimum is equal to $1$, then it is enough to use the bound $w(x,y)> \delta_{J_{1,L}}(y) $. If both of them are smaller than $1$,  one can use $w(x,y)> \sqrt{\alpha_{xy}s} $. Finally, if only the right-hand side one is smaller than $1$, then we need to employ the bound $w(x,y)>\alpha_{xy}$. The proof is complete.
 \end{proof}

The next theorem provides a general lower bound of the form as in \eqref{eq:Z} but with proper exponential behavior. Nevertheless, both results concern different classes of sets (with non-empty intersection).

\begin{thm}\label{thm:lower} For any convex set $D\in \mathcal C^{1,1}_r$ and $T>0$ there is  $C=C(n,r,T)$ such that  
$$p_{D}(t,x,y)\geq C\,\(1\wedge \frac{\delta_{D}(x)\delta_D(y)}{t}\)p(t,x,y).$$
\end{thm}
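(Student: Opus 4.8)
\emph{Setup.} The plan is to split into three regimes according to the relative sizes of $\delta_D(x)$, $\delta_D(y)$ and $\sqrt t$. By symmetry I assume $\delta_D(x)\le\delta_D(y)$, and I fix a large constant $\kappa=\kappa(n)$ and first reduce to $t<(r/\kappa)^2$, so that $\sqrt t$ is much smaller than $r$ and every inner ball has ample room (for $t$ bounded below the statement is classical, e.g.\ for bounded $D$ it follows from the large‑time bound $p_D(t,x,y)\approx\delta_D(x)\delta_D(y)e^{-\lambda_1 t}$ recalled in the Introduction). The geometric facts used throughout are: $\delta_D$ is concave on the convex set $D$, so each super‑level set $D_\rho:=\{z\in D:\delta_D(z)\ge\rho\}$ is convex; along the inward normal $\nu_x$ from the foot $x'$ of $x$ one has $\delta_D(x'+s\nu_x)=s$ for $0\le s\le r$ (upper bound from the supporting half‑space $H_x$, lower bound from the inner ball $B(x^\ast,r)\subset D$ tangent at $x'$); and a closed cylinder of radius $\rho$ around a segment $\overline{ab}$ lies in $D$ as soon as $a,b\in D_\rho$, because $D_\rho$ is convex. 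In each regime I produce a bound $p_D(s,\cdot,\cdot)\gtrsim(\text{explicit factor})\,p(s,\cdot,\cdot)$ for suitable $s\le t$, and glue such bounds with the Chapman--Kolmogorov identity \eqref{eq:CK} and Proposition \ref{cor:CKlow}.

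\emph{Regime I: $\delta_D(x)\ge\kappa\sqrt t$.} By concavity $\inf_{z\in\overline{xy}}\delta_D(z)=\min(\delta_D(x),\delta_D(y))\ge\kappa\sqrt t$, so the interval $\overline{xy}\subset D$ keeps distance $\ge\kappa\sqrt t$ from $\partial D$; choosing $\kappa$ so large that the subtracted term in \eqref{eq:vdB} is at most $\tfrac12$ whenever $\rho^2/t\ge\kappa^2$ gives $p_D(t,x,y)\ge\tfrac12 p(t,x,y)$, which suffices since $1\wedge\delta_D(x)\delta_D(y)/t\le1$. \emph{Regime II: $\delta_D(x)<\kappa\sqrt t$, $|x-y|\le\kappa\sqrt t$.} With $B^x:=B(x^\ast,r)\subset D$ tangent at $x'$ one has $\delta_{B^x}(x)=\delta_D(x)$. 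If $\delta_D(y)\gtrsim t/r$ then, since $\partial B^x$ deviates from the tangent hyperplane at $x'$ by only $O(t/r)$ over tangential distances $\lesssim\sqrt t$ while $|x'-y'|\lesssim\sqrt t$, one checks $y\in B^x$ with $\delta_{B^x}(y)\gtrsim\delta_D(y)$, and the rescaled ball estimate \eqref{eq:MS} (bounded rescaled horizon) gives $p_D(t,x,y)\ge p_{B^x}(t,x,y)\gtrsim(1\wedge\delta_{B^x}(x)\delta_{B^x}(y)/t)p(t,x,y)\gtrsim(1\wedge\delta_D(x)\delta_D(y)/t)p(t,x,y)$. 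In the degenerate subcase $\delta_D(y)\lesssim t/r$ (so $\delta_D(x)\le\delta_D(y)\ll\sqrt t$) I fix $\widehat x$ at distance $\approx\sqrt t$ from $\partial D$ over $x'$, note $\widehat x\in B^x\cap B^y$ with $\delta_{B^x}(\widehat x),\delta_{B^y}(\widehat x)\gtrsim\sqrt t$, apply \eqref{eq:CK} at times $t/2,t/2$ over $B(\widehat x,c\sqrt t)$, bound $p_D(t/2,x,z)\ge p_{B^x}(t/2,x,z)\gtrsim(\delta_D(x)/\sqrt t)p(t/2,x,z)$ and $p_D(t/2,z,y)\ge p_{B^y}(t/2,z,y)\gtrsim(\delta_D(y)/\sqrt t)p(t/2,z,y)$ via \eqref{eq:MS}, and conclude with $\int_{B(\widehat x,c\sqrt t)}p(t/2,x,z)p(t/2,z,y)\,dz\gtrsim p(t,x,y)$ from Proposition \ref{cor:CKlow}.

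\emph{Regime III: $\delta_D(x)<\kappa\sqrt t$, $|x-y|>\kappa\sqrt t$} — the crux. The traveller is routed from $x$ to $y$ through the convex bulk $D_{2\sqrt t}$ by means of the half‑capsule estimate Lemma \ref{lem:S1L}, gaining a factor $\delta_D(x)/\sqrt t$ near $x$ and $1\wedge\delta_D(y)/\sqrt t$ near $y$ while importing no spurious exponential factor. Concretely: push $x$ to $\widehat x:=x'+(\delta_D(x)\vee2\sqrt t)\nu_x\in D_{2\sqrt t}$ (with $|x-\widehat x|\le2\sqrt t$), and similarly $\widehat y$ near $y$ if $\delta_D(y)\lesssim\sqrt t$; by convexity of $D_{2\sqrt t}$ the segment $\overline{\widehat x\widehat y}$ lies in $D_{2\sqrt t}$, so a closed radius‑$\sqrt t$ cylinder around it (extended $\sqrt t$ past each end) capped by a radius‑$\sqrt t$ half‑ball just past $\widehat y$ is a half‑capsule $J\cong J_{\sqrt t,L}$, $L\approx|\widehat x-\widehat y|+2\sqrt t>3\sqrt t$, contained in $D$, whose flat face sits at distance $\sqrt t$ from $\widehat x$ and whose cap contains $\widehat y$ (shifted to contain $y$ with $\delta_J(y)=\delta_D(y)$ when $\delta_D(y)<\sqrt t$). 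Lemma \ref{lem:S1L} with $\widehat x$ as source then gives, for all $s\le t$, $p_D(s,\widehat x,y)\gtrsim(1\wedge\delta_D(y)/\sqrt t)\,p(s,\widehat x,y)$; when $\delta_D(y)$ is not $\lesssim\sqrt t$ the role of Lemma \ref{lem:S1L} on the $y$‑side is taken instead by a ball estimate \eqref{eq:MS} at $y$ (pushing $\widehat x$ further out so the connecting segment stays in $D_{\kappa\sqrt t}$) or by \eqref{eq:vdB} when $\delta_D(y)\gtrsim\kappa\sqrt t$, always capturing the correct $y$‑factor $1\wedge\delta_D(y)/\sqrt t$. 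Finally the short leg $x\to\widehat x$ (and, if present, $y\to\widehat y$) is inserted via \eqref{eq:CK} with the time split $t=s_1+s_2$, $s_1:=t\,|x-\widehat x|/\bigl(|x-\widehat x|+|\widehat x-y|\bigr)$ chosen proportionally to the Euclidean lengths of the path pieces — exactly as the crossing point $m$ is placed on the segment in the proof of Lemma \ref{lem:S1L} — so that the time‑$s_1$ Gaussian peaks in the right place and Proposition \ref{cor:CKlow} over a ball of radius $\approx\sqrt{s_1 s_2/t}$ about a mild perturbation of that peak yields $\int p(s_1,x,z)p(s_2,z,\,\cdot\,)\,dz\gtrsim p(t,x,y)$ with bounded displacement ratio; on that ball $p_D(s_1,x,z)\ge p_{B^x}(s_1,x,z)\gtrsim(\delta_D(x)/\sqrt t)p(s_1,x,z)$ by \eqref{eq:MS}. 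Multiplying the two (or three) estimates and using $\delta_D(x)<\kappa\sqrt t$ gives $p_D(t,x,y)\gtrsim(\delta_D(x)/\sqrt t)(1\wedge\delta_D(y)/\sqrt t)\,p(t,x,y)\gtrsim(1\wedge\delta_D(x)\delta_D(y)/t)\,p(t,x,y)$.

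The main obstacle is Regime III, and within it the bookkeeping guaranteeing that no extra exponential factor appears: the half‑capsule must be embedded with its axis running inside the \emph{convex} set $D_{2\sqrt t}$ (this is exactly where convexity of $D$ and concavity of $\delta_D$ are indispensable — for a non‑convex or very thin unbounded $D$ such a clean bound genuinely need not hold), and the Chapman--Kolmogorov times must be allotted in proportion to the Euclidean lengths of the path segments, mirroring the internal split of Lemma \ref{lem:S1L}, so that the product of the Gaussians reconstitutes $p(t,x,y)$ and not $e^{-c|x-y|^2/t}p(t,x,y)$. The subsidiary points — the $\mathcal C^{1,1}_r$ comparisons of $\delta_D$ with $\delta_{B^x},\delta_{B^y},\delta_J$ at points $O(\sqrt t)$ apart, and the enumeration of the few subcases of Regime III by the size of $\delta_D(y)$ relative to $\sqrt t$ and $r$ — are routine.
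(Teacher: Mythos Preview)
Your Regime III contains a genuine gap in the case $\delta_D(x)\ll\sqrt t$ while $\delta_D(y)\gg\sqrt t$ (which you must handle, since Regime I requires the \emph{smaller} distance to exceed $\kappa\sqrt t$). Your concluding inequality
\[
\Big(\frac{\delta_D(x)}{\sqrt t}\Big)\Big(1\wedge\frac{\delta_D(y)}{\sqrt t}\Big)\ \gtrsim\ 1\wedge\frac{\delta_D(x)\delta_D(y)}{t}
\]
is simply false here: for $\delta_D(y)>\sqrt t$ the left side equals $\delta_D(x)/\sqrt t$, whereas the right side equals $\min\{1,\delta_D(x)\delta_D(y)/t\}$, and the ratio is at most $\sqrt t/\delta_D(y)\to 0$. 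Concretely, with $\delta_D(x)=t$, $\delta_D(y)=1/2$ (and $t$ small) your product is $\sqrt t$ while the target is $1/2$. The underlying reason is that your short leg $x\to\widehat x$ via the inner ball can never contribute more than $\delta_D(x)/\sqrt t$, and your half-capsule oriented with $\widehat x$ at the flat face and $y$ in the cap only contributes $1\wedge\delta_J(y)\sqrt t/s_2\le 1$; no choice of $s_1,s_2$ with your geometry recovers the missing factor $\delta_D(y)/\sqrt t$. Relatedly, your proposed CK integration center $\widehat x$ sits $O(\sqrt t)$ away from the Gaussian peak $(1-s_1/t)x+(s_1/t)y$ along the \emph{normal} direction, not along $\overline{xy}$, so when $s_1\approx t^{3/2}/|x-y|\ll t$ the displacement penalty $e^{-d^2/(2s_1(1-s_1/t))}$ in Proposition \ref{cor:CKlow} is $e^{-c|x-y|/\sqrt t}$, not a harmless constant.

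The paper's remedy is precisely the asymmetric time split that you attribute only to the internal workings of Lemma \ref{lem:S1L}: when one point (say $y$) is far and the other ($x$) is near, the paper places the intermediate point $m$ on the segment $\overline{xy}$ at fraction $\alpha=5\sqrt t/\delta_D(y)$ from $x$, so that (i) $m$ is the exact CK peak, avoiding any exponential loss; (ii) $\delta_D(m)\ge 5\sqrt t$ by concavity, so $z$ near $m$ can serve as the capsule source and the leg $z\to y$ is handled by \eqref{eq:vdB}; and (iii) the capsule leg $z\to x$ gets time $\alpha t$, making the Lemma \ref{lem:S1L} factor equal to $1\wedge\delta_D(x)\sqrt t/(\alpha t)=1\wedge\delta_D(x)\delta_D(y)/(5t)$, which is exactly the target. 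In short, the capsule must point \emph{toward} the near-boundary point with a time inversely proportional to the far point's distance, not away from it with a time proportional to $|x-\widehat x|$. A minor secondary point: your reduction to $t<(r/\kappa)^2$ via ``large-time asymptotics'' is stated only for bounded $D$, but the theorem allows unbounded $D\in\mathcal C^{1,1}_r$; the paper extends the time range by another CK step instead.
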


\begin{proof}
Assume $t<1$. First, consider the case  $\delta_D(x)>10\sqrt t$. Due to the bound  \eqref{eq:vdB}, we may additionally assume $\delta_D(y)\leq \sqrt t$. Put $\alpha_{xy}=5\sqrt t/\delta_{D}(x)<1/2$ and  $m=\alpha_{xy}x+(1-\alpha_{xy})y$. It follows from  intercept theorem that $\delta_D(m)>5\sqrt t$. Thus, for every $z\in B(m,\sqrt{\alpha_{xy}t})$ we have  $\delta_D(z)\geq 4\sqrt t$ and $|z-y|\geq \delta_D(z)-\delta_D(y)\geq 3\sqrt t$, which lets us employ Lemma \ref{lem:S1L}. Precisely, for a given $z$ one can transform isometrically the set $J_{\sqrt t,L_z}$ into $J'_{\sqrt t,L_z}$, for suitably chosen $L=L(z)>3\sqrt t$, such that $\delta_D(y)=\delta_{J'_{\sqrt t, L_z}}(y)$ and the point $(L_z-\sqrt t,0,0,...,0)$ is transformed into $z$. Then, Lemma \ref{lem:S1L} gives us for $z\in B(m,\sqrt{\alpha_{xy}t})$
\begin{align*}
p_D(\alpha_{xy}t,z,y)&\geq p_{J'_{\sqrt t, L_z}}(\alpha_{xy}t,z,y)\\
& \gtrsim \(1\wedge\frac{\delta_D(y)\sqrt t}{\alpha_{xy}t}\)p(\alpha_{xy}t,z,y)\\
&=\(1\wedge\frac{\delta_D(y)\delta_D(x)}{6t}\)p(\alpha_{xy}t,z,y),
\end{align*}
while, by \eqref{eq:vdB}, we have
$$p_D((1-\alpha_{xy})t,x,z)\gtrsim p((1-\alpha_{xy})t,x,z).$$
Consequently, by Chapman-Kolmogorov identity and Corollary \ref{cor:CKlow} we get
\begin{align*}
p_D(t,x,y)&\geq\int_{B(m,\sqrt{\alpha_{xy}t})}p_D((1-\alpha_{xy})t,x,z)p_D(\alpha_{xy}t,z,y)dz\\
&\gtrsim\(1\wedge\frac{\delta_D(y)\delta_D(x)}{t}\)\int_{B(m,\sqrt{\alpha_{xy}t})}p((1-\alpha_{xy})t,x,z)p(\alpha_{xy}t,z,y)dz\\
&\gtrsim\(1\wedge\frac{\delta_D(y)\delta_D(x)}{t}\)p(t,x,y).
\end{align*}

Consider now $\delta_D(x), \delta_D(y)\leq 10\sqrt t$. Let $p_{xy}$ be a point such that $|p_{xy}-\frac{x+y}{2}|=11\sqrt {t/2}$ and $\delta_D(p_{xy})\geq 11\sqrt {t/2}$. Such point exists for sufficiently small $T$. Then, for $z\in B(p_{xy},\sqrt {t/2})$, we have $\delta_D(z)\geq 10\sqrt{t/2}$ and, from the previous case,  assertion is true for $p_D(t/2,x,z)$ and $p_D(t/2,z,y)$.  
Thus, by Corollary \ref{cor:CKlow} we get
\begin{align*}
p_D(t,x,y)&\geq\int_{B(p_{xy},\sqrt{t/2})}p_D(t/2,x,z)p_D(t/2,z,y)dz\\
&\gtrsim\frac{\delta_D(x)\sqrt t}{t}\frac{\delta_D(y)\sqrt t}{t}\int_{B(p_{xy},\sqrt{t/2})}p(t/2,x,z)p(t/2,z,y)dz\\
&\approx\(1\wedge\frac{\delta_D(y)\delta_D(x)}{t}\)p(t,x,y),
\end{align*}
where we used $\delta_D(x)\approx \delta_D(y)\approx \sqrt t$. Finally, let us observe that the range of $T$ may be easily extended, but the cost we pay is  decrease of the constant $C$. Indeed, for $T\leq t<2T$ and $x,y\in D$ there is a point  $q_{xy}\in D$  such that $\left|q_{xt}-\frac{x+y}2\right|\leq \sqrt t \frac{r}{2\sqrt T}$ and $\delta_D(q_{xy})\geq  \sqrt t \frac{r}{2\sqrt T}$. Then, repeating previous arguments,  we obtain
\begin{align*}
p_D(t,x,y)&\geq\int_{B\(q_{xy},\sqrt t \frac{r}{4\sqrt T}\)}p_D(t/2,x,z)p_D(t/2,z,y)dz\\
&\gtrsim\(1\wedge\frac{\delta_D(x)\sqrt t\frac{r}{2\sqrt T}}{t}\)\(1\wedge\frac{\delta_D(y)\sqrt t\frac{r}{2\sqrt T}}{t}\)\int_{B\(q_{xy},\sqrt t \frac{r}{4\sqrt T}\)}p(t/2,x,z)p(t/2,z,y)dz\\
&\geq c\(1\wedge\frac{\delta_D(y)\delta_D(x)}{t}\)p(t,x,y),
\end{align*}
for some constant $c=c(n,r,T)$. 
\end{proof}

The lower bound from Theorem \ref{thm:lower} may be improved by suitable  application of Chapman-Kolmogorov identity, as presented below. In fact, the procedure could be iterated, however, further iterations lead to much more complicated forms, which do not seem to be a relevant enhancement in the general case.

\begin{thm}\label{cor:mainlower}
For any convex set $D\in \mathcal C^{1,1}_r$ and $T>0$ there is  $C=C(n,r,T)$ such that  
\begin{align}\nonumber
&p_D(t,x,y)\\\label{eq:lower1}
&\geq C p(t,x,y)\(1\wedge \frac{\d_{D}(x)\(\d_{D}\(\frac{x+y}2\)+\sqrt t\)}{t}\)\(1\wedge \frac{\d_{D}(y)\(\delta_{D}\(\frac{x+y}{2}\)+\sqrt t\)}{t}\)\\\label{eq:lower2}
&\approx  C p(t,x,y)\[\(1\wedge\frac{\delta_D(x)\delta_D(y)}{t}\)+\(1\wedge \frac{\d_{D}(x)\d_{D}\(\frac{x+y}2\)}{t}\)\(1\wedge \frac{\d_{D}(y)\delta_{D}\(\frac{x+y}{2}\)}{t}\)\].
\end{align}
\end{thm}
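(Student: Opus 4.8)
The plan is to establish the lower bound \eqref{eq:lower1} directly and then deduce the equivalent form \eqref{eq:lower2} from it by an elementary pointwise comparison. Throughout set $m=\tfrac{x+y}2\in D$ and recall that $\delta_D$ is concave on the convex set $D$ (for instance because $\lambda B(x,r_1)+(1-\lambda)B(y,r_2)\subset D$ whenever $B(x,r_1),B(y,r_2)\subset D$), so that $\delta_D(x)\vee\delta_D(y)\le 2\delta_D(m)$. I would split according to the size of $\delta_D(m)$ measured against $\sqrt t$, say at the threshold $\delta_D(m)=10\sqrt t$ (any constant $>1$ works).

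In the regime $\delta_D(m)\le 10\sqrt t$ one has $\delta_D(x),\delta_D(y)\le 20\sqrt t$ and $\delta_D(m)+\sqrt t\approx\sqrt t$, so the right-hand side of \eqref{eq:lower1} is comparable to $\bigl(1\wedge\tfrac{\delta_D(x)}{\sqrt t}\bigr)\bigl(1\wedge\tfrac{\delta_D(y)}{\sqrt t}\bigr)$; since all arguments involved are bounded, a short check (splitting on whether $\delta_D(x)$ and $\delta_D(y)$ exceed $\sqrt t$) shows this equals, up to constants, $1\wedge\tfrac{\delta_D(x)\delta_D(y)}t$. Hence in this regime \eqref{eq:lower1} is nothing but Theorem \ref{thm:lower}.

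In the regime $\delta_D(m)>10\sqrt t$ the ball $B(m,\sqrt{t/2})$ lies in $D$, and since $\delta_D$ is $1$-Lipschitz, $\delta_D(z)\approx\delta_D(m)\approx\delta_D(m)+\sqrt t$ uniformly for $z\in B(m,\sqrt{t/2})$. I would then restrict the Chapman--Kolmogorov identity \eqref{eq:CK} with $\alpha=\tfrac12$ to this ball, apply Theorem \ref{thm:lower} to each of $p_D(t/2,x,z)$ and $p_D(t/2,z,y)$ — which, after replacing $\delta_D(z)$ by the comparable quantity $\delta_D(m)+\sqrt t$, produces exactly the two truncated quotients of \eqref{eq:lower1} — pull these quotients (now independent of $z$) out of the integral, and estimate what remains by
\begin{align*}
\int_{B(m,\sqrt{t/2})}p(t/2,x,z)\,p(t/2,z,y)\,dz\ \gtrsim\ p(t,x,y),
\end{align*}
which is Proposition \ref{cor:CKlow} with $a=m=\tfrac12x+\tfrac12y$ (so $d=0$) and $r=\sqrt{t/2}$ (so $1\wedge\tfrac{r^2}{\alpha(1-\alpha)t}=1$). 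This gives \eqref{eq:lower1}.

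It remains to pass from \eqref{eq:lower1} to \eqref{eq:lower2}. Writing $a=\delta_D(x)$, $b=\delta_D(y)$, $c=\delta_D(m)$ (so $a,b\le 2c$) and using $c+\sqrt t\approx c\vee\sqrt t$, this reduces to the numerical equivalence
\begin{align*}
\Bigl(1\wedge\tfrac{a(c\vee\sqrt t)}t\Bigr)\Bigl(1\wedge\tfrac{b(c\vee\sqrt t)}t\Bigr)\ \approx\ \Bigl(1\wedge\tfrac{ab}t\Bigr)+\Bigl(1\wedge\tfrac{ac}t\Bigr)\Bigl(1\wedge\tfrac{bc}t\Bigr).
\end{align*}
For $c\le\sqrt t$ both sides collapse (using $a,b\le 2c\le 2\sqrt t$ and $c^2\le t$) to a multiple of $\tfrac{ab}t$; for $c>\sqrt t$ the left-hand side equals the product $\bigl(1\wedge\tfrac{ac}t\bigr)\bigl(1\wedge\tfrac{bc}t\bigr)$ itself, and one checks, by a three-way split on how $ac$ and $bc$ compare with $t$ (and using $b\le 2c$ together with $c^2>t$), that the extra summand $1\wedge\tfrac{ab}t$ is dominated by it. I expect this last, purely arithmetic, step — together with keeping the comparison $\delta_D(z)\approx\delta_D(m)+\sqrt t$ uniform on $B(m,\sqrt{t/2})$ — to be the only part requiring genuine care; everything else follows mechanically from Theorem \ref{thm:lower}, Proposition \ref{cor:CKlow} and concavity of $\delta_D$.
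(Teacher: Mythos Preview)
Your proposal is correct and follows essentially the same route as the paper: split on whether $\delta_D\bigl(\tfrac{x+y}{2}\bigr)$ is small or large relative to $\sqrt t$, apply Theorem \ref{thm:lower} directly in the first case, and in the second combine Chapman--Kolmogorov over a ball at the midpoint with Theorem \ref{thm:lower} and Proposition \ref{cor:CKlow}, then verify \eqref{eq:lower2} by the same two-case arithmetic. The only differences are cosmetic (your threshold $10\sqrt t$ versus the paper's $2\sqrt t$, your ball radius $\sqrt{t/2}$ versus $\sqrt t$, and your explicit appeal to concavity of $\delta_D$).
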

\begin{proof}
For $\delta_D\(\frac{x+y}{2}\)\leq2\sqrt t$ the inequality \eqref{eq:lower1} follows directly from Theorem \ref{thm:lower}. We therefore assume $\delta_D\(\frac{x+y}{2}\)\geq2\sqrt t$ .  Then, for  $z\in B\(\frac{x+y}2,\sqrt t\)\subset D$ it holds
 $$\delta_D(z)\geq \frac12\delta_D\(\frac{x+y}{2}\)\geq \frac14\(\delta_D\(\frac{x+y}{2}\)+\sqrt t\).$$
Hence, by  Chapman-Kolmogorov equality,  Theorem \ref{thm:lower} and Corollary \ref{cor:CKlow} we get
\begin{align*}
&p_D(t,x,y)\\
&=\int_Dp_D(t/2,x,z)p_D(t/2,z,y)dz\\
&\geq C\int_{B\(\frac{x+y}2,\sqrt t\)}p(t/2,x,z)\(1\wedge \frac{\d_D(x)\d_D(z)}{t}\)p(t/2,z,y)\(1\wedge \frac{\d_D(z)\d_D(y)}{t}\)dz\\
&\geq C\(1\wedge \frac{\d_{D}(x)\(\d_{D}\(\frac{x+y}2\)+\sqrt t\)}{t}\)\(1\wedge \frac{\d_{D}(y)\(\delta_{D}\(\frac{x+y}{2}\)+\sqrt t\)}{t}\)\\
&\ \ \ \ \ \ \ \  \times \int_{B\(\frac{x+y}2,\sqrt t\)}p(t/2,x,z)p(t/2,z,y)dz\\
&\gtrsim C\(1\wedge \frac{\d_{D}(x)\(\d_{D}\(\frac{x+y}2\)+\sqrt t\)}{t}\)\(1\wedge \frac{\d_{D}(y)\(\delta_{D}\(\frac{x+y}{2}\)+\sqrt t\)}{t}\)p(t,x,y),
\end{align*}
as required. In order to show \eqref{eq:lower2} we consider two cases as well. If $\delta_{D}\(\frac{x+y}{2}\)\geq \sqrt t$, 
\begin{align*}
&\(1\wedge \frac{\d_{D}(x)\(\d_{D}\(\frac{x+y}2\)+\sqrt t\)}{t}\)\(1\wedge \frac{\d_{D}(y)\(\delta_{D}\(\frac{x+y}{2}\)+\sqrt t\)}{t}\)\\
&\approx \(1\wedge \frac{\d_{D}(x)\d_{D}\(\frac{x+y}2\)}{t}\)\(1\wedge \frac{\d_{D}(y)\delta_{D}\(\frac{x+y}{2}\)}{t}\)\\
&\geq \(1\wedge \frac{\d_{D}(x)}{\sqrt t}\)\(1\wedge \frac{\d_{D}(y)}{\sqrt t}\)\geq \(1\wedge \frac{\delta_D(x)\delta_D(y)}{t}\),
\end{align*}
which means that the right-hand side term in brackets in \eqref{eq:lower2} is dominating and comparable with the  factor from \eqref{eq:lower1}. Similarly, for  $\delta_{D}\(\frac{x+y}{2}\)\leq \sqrt t$ we conclude $\delta_D(x),\delta_D(y)\leq 2\sqrt t$, and hence
\begin{align*}
&\(1\wedge \frac{\d_{D}(x)\(\d_{D}\(\frac{x+y}2\)+\sqrt t\)}{t}\)\(1\wedge \frac{\d_{D}(y)\(\delta_{D}\(\frac{x+y}{2}\)+\sqrt t\)}{t}\)\\
&\approx  \frac{\delta_D(x)\delta_D(y)}{t} \approx  \(1\wedge \frac{\delta_D(x)\delta_D(y)}{t}\)\geq \(1\wedge \frac{\delta_D(x)\delta_D(y)\(\delta_D\(\frac{x+y}2\)\)^2}{t^2}\)\\
&\approx \frac{\delta_D(x)\delta_D(y)\(\delta_D\(\frac{x+y}2\)\)^2}{t^2}\approx \(1\wedge \frac{\d_{D}(x)\d_{D}\(\frac{x+y}2\)}{t}\)\(1\wedge \frac{\d_{D}(y)\delta_{D}\(\frac{x+y}{2}\)}{t}\),
\end{align*}
which ends the proof.
\end{proof}

\section{Two-sided estimates}
\subsection{General results}

For a strictly convex $\mathcal C^{1,1} $ domain $D$   we define 
\begin{align*}
Q_{D}&:=\inf_{w,z\in\partial D, w\neq z}\frac{\delta_D\(\frac{w+z}{2}\)}{\delta_{H_{w}}\(\frac{w+z}2\)},\\
R_{D}&:=\min\left\{\inf_{\substack{w,z\in\partial D, w\neq z\\\delta_D\(\frac{w+z}2\)\leq1}}\frac{\delta_D\(\frac{w+z}{2}\)}{\delta_{H_{w}}\(\frac{w+z}2\)},\inf_{\substack{w,z\in\partial D, w\neq z\\\delta_D\(\frac{w+z}2\)>1}}\sup_{\substack{m\in\overline{wz}\\\delta_D(m)>1}}\frac{\delta_D\(m\)}{\delta_{H_{w}}\(m\)}\right\}.
\end{align*}
Note that since $D$ is a $\mathcal C^{1,1} $ domain, the half-space $H_w$ is well defined for any $w\in\partial D$. It is clear that  $0\leq Q_D,R_D\leq 1$. Furthermore, since $\frac{w+z}2$ is a possible value of  $m$ in the supremum in the definition of $R_{D}$, it holds $Q_D\leq R_D$. In general, we will be expecting $Q_D, R_D>0$. The condition $Q_D>0$ means that for any $w,z\in \partial D$ the distance from the midpoint $\frac{w+z}2$ to the boundary $\partial D$ is comparable with the distances to $P_{w}$ and $P_{z}$.  In case $R_D>0$ the condition is weaker whenever $\delta_D\(\frac{w+z}2\)>1$, as we only require  existence of a point at the interval $\overline{wz}$ whose distance to $\partial D$ is greater than $1$ and comparable to distance to $P_{w}$. Let us introduce the following two class of sets corresponding to the characteristics $Q_D$ and $R_D$
\begin{align*}
\mathcal S_{Q}&:=\left\{D\in \mathcal C^{1,1}(\R^n): D\text{ is strictly convex}, Q_D>0\right\},\\
\mathcal S_{R}&:=\left\{D\in \mathcal C_r^{1,1}(\R^n) \text{ for some } r>0: D\text{ is strictly convex}, R_D>0\right\}.
\end{align*}
In the definition of $\mathcal S_Q$ we do not require the sets to be in $\mathcal C_r^{1,1}(\R^n)$, since it turns out that every $D\in \mathcal S_Q$ is bounded (see Lemma \ref{lem:dH<dD}) and it is well known that every bounded $\mathcal C^{1,1}$ set belongs to $\mathcal C_r^{1,1}(\R^n)$ for some $r>0$. Both of the  classes $\mathcal S_{Q}$ and $\mathcal S_R$  contain nontrivial and important examples (see Propositions \ref{prop:y>x^p} and \ref{prop:analytical}). It seems also not easy to construct a  strictly convex  $\mathcal C^{1,1}_r$ set which does not belong to $\mathcal S_R$.  

The following monotonicity property will be needed in the sequel:
\begin{prop}\label{prop:mono} Let $D$ by a $\mathcal C^{1,1}$ convex domain in $\R^n$, $n\geq 2$, and let $w,z\in \partial D$. Then the function 
$$\alpha\longrightarrow\frac{\delta_D\((1-\alpha)w+\alpha z\)}{\delta_{H_{w}}\((1-\alpha)w+\alpha z\)}$$   
is non-increasing on $[0,1]$.
\end{prop}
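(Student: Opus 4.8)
The plan is to parametrize the segment by $m_\alpha = (1-\alpha)w + \alpha z$ and track how the numerator $\delta_D(m_\alpha)$ and denominator $\delta_{H_w}(m_\alpha)$ change. Since $H_w$ is a half-space whose boundary hyperplane $P_w$ passes through $w$, the function $\alpha \mapsto \delta_{H_w}(m_\alpha)$ is \emph{affine}: indeed $\delta_{H_w}(m_\alpha) = (m_\alpha - w)\cdot v$, where $v$ is the inward unit normal of $P_w$, and $m_\alpha - w = \alpha(z-w)$, so $\delta_{H_w}(m_\alpha) = \alpha\,(z-w)\cdot v = \alpha\,\delta_{H_w}(z)$. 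Thus the denominator is simply $\alpha$ times a constant. It therefore suffices to show that $\alpha \mapsto \delta_D(m_\alpha)/\alpha$ is non-increasing on $(0,1]$, or equivalently that $g(\alpha) := \delta_D(m_\alpha)$ is a concave function of $\alpha$ with $g(0) = \delta_D(w) = 0$: a concave function vanishing at $0$ has non-increasing difference quotient $g(\alpha)/\alpha$ (this is the standard chord-slope monotonicity of concave functions through the origin).

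The key step is thus the concavity of $\alpha \mapsto \delta_D(m_\alpha)$. This follows from the general fact that the distance-to-the-boundary function $\delta_D(\cdot)$ is concave on a convex domain $D$: for a convex set, $\delta_D(x) = \dist(x, \partial D) = \dist(x, D^c)$ on $D$, and the distance to a convex complement... more precisely, for convex $D$ one has $\delta_D = \inf_{z \in \partial D}\,\dist(\cdot, P_z)$ where $P_z$ ranges over supporting hyperplanes, and each $\dist(\cdot, P_z)$ restricted to $H_z$ is affine, hence concave; an infimum of concave (here affine) functions is concave. Restricting a concave function to the line through $w$ and $z$ gives that $g$ is concave on $[0,1]$.

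The last ingredient is $g(0) = 0$, which holds because $m_0 = w \in \partial D$, so $\delta_D(w) = 0$; and $g(\alpha) \geq 0$ for all $\alpha$ since $m_\alpha \in \overline{D}$ by convexity. Then for $0 < \alpha_1 < \alpha_2 \leq 1$, concavity of $g$ gives $g(\alpha_1) \geq \frac{\alpha_1}{\alpha_2} g(\alpha_2) + \left(1 - \frac{\alpha_1}{\alpha_2}\right)g(0) = \frac{\alpha_1}{\alpha_2}g(\alpha_2)$, i.e. $g(\alpha_1)/\alpha_1 \geq g(\alpha_2)/\alpha_2$. Dividing by the common constant $\delta_{H_w}(z)$ yields the claimed monotonicity, and the value at $\alpha = 0$ is handled by continuity (or is simply excluded as a removable issue since the ratio is a $0/0$ limit there — one may note $\lim_{\alpha\to 0^+}$ of the ratio equals the directional derivative of $\delta_D$ at $w$, which is $\geq \delta_{H_w}(z)^{-1} \cdot \delta_{H_w}(z)$... ). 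I expect the only mild subtlety to be the behaviour at the endpoint $\alpha = 0$, where both sides vanish; this is handled painlessly by the chord-slope argument on $(0,1]$ together with the observation that the ratio extends continuously (the supporting hyperplane $P_w$ being tangent to $D$ at $w$ by $\mathcal C^{1,1}$-smoothness forces $\delta_D(m_\alpha)/\delta_{H_w}(m_\alpha) \to 1$ as $\alpha \to 0^+$, which is consistent with the ratio being $\leq 1$ and non-increasing).
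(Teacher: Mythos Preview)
Your proof is correct. The key observation --- that $\delta_D$ is concave on a convex domain $D$, being the infimum of the affine functions $\delta_{H_{z'}}$ over supporting half-spaces --- gives that $g(\alpha)=\delta_D(m_\alpha)$ is concave with $g(0)=0$, whence $g(\alpha)/\alpha$ is non-increasing; combined with the linearity $\delta_{H_w}(m_\alpha)=\alpha\,\delta_{H_w}(z)$ this yields the claim on $(0,1]$. Your handling of the endpoint $\alpha=0$ is also fine.

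The paper's argument rests on the same idea but carries it out pointwise and in two stages. In dimension two, for points $p,q$ on the segment with $p$ closer to $w$, it takes the supporting half-plane $H_p$ at the nearest boundary point to $p$, so that $\delta_D(p)=\delta_{H_p}(p)$ and $\delta_D(q)\leq\delta_{H_p}(q)$; the intercept theorem (affineness of $\delta_{H_p}$ and $\delta_{H_w}$ along the line, together with $\delta_{H_p}(w)\geq 0$) then gives the desired inequality. For $n\geq 3$ it reduces to the planar case by intersecting with a suitable $2$-plane through $\overline{wz}$ and the nearest boundary point. This is precisely a hands-on verification of the concavity/chord-slope inequality you invoke as a known fact. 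Your route is more economical --- it avoids both the explicit intercept-theorem computation and the dimension reduction --- while the paper's version has the minor virtue of being self-contained.
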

\begin{proof}
Consider first $n=2$. Let $p,q\in\overline{wz}$  be such that $0<|w-p|<|w-q|\leq |w-z|$ and let $p'\in\partial D$ be a point realising the distance of $p$ to $\partial D$. Then $P_{p}:=P_{p'}$ is a line tangent to $D$ at $p'$, and $H_p$ is the related half-plane.
 Thus, intercept theorem gives us
\begin{align*}
 \frac{\delta_{H_w}(p)}{|w-p|}= \frac{\delta_{H_w}(q)}{|w-q|},\ \ \ \ \text{ and }\ \ \ \ \frac{\delta_D(p)}{|w-p|}\geq \frac{\delta_{H_p}(q)}{|w-q|}.
\end{align*}
This implies
$$\frac{\delta_D(p)}{\delta_{H_w}(p)}\geq \frac{\delta_{H_p}(q)}{\delta_{H_w}(q)}\geq \frac{\delta_{D}(q)}{\delta_{H_w}(q)},$$
as required. For $n\geq 3$ let $P$ be the $2$-dimensional plane containing the interval $\overline {wz}$ and the point $p'\in\partial D$  realising the distance of $p$ to $\partial D$. Since $\delta_{D\cap P}(p)=\delta_{D}(p)$, $\delta_{D\cap P}(q)\geq \delta_{D}(q)$ and, by intercept theorem,  $\frac{\delta_{H_w\cap P}(p)}{\delta_{H_w\cap P}(q)}=\frac{\delta_{H_w}(p)}{\delta_{H_w}(q)}$, we may apply the result for $n=2$ and get
$$\frac{\delta_D(p)}{\delta_{H_w}(p)}= \frac{\delta_{D\cap P}(p)}{\delta_{H_w}(p)}\geq \frac{\delta_{D\cap P}(q)\frac{\delta_{H_w\cap P}(p)}{\delta_{H_w\cap P}(q)}}{\delta_{H_w}(p)}\geq \frac{\delta_{D}(q)\frac{\delta_{H_w\cap P}(p)}{\delta_{H_w\cap P}(q)}}{\delta_{H_w}(p)}=\frac{\delta_{D}(q)}{\delta_{H_w}(q)},$$
which ends the proof.
\end{proof}

The next lemma shows that if $Q_D>0$,  then $D$ is bounded and the the infimum from the definition of $Q_D$ taken over all $z,w\in \bar D$ is positive as well, and consequently $\delta_D\(\frac{w+z}2\)$ and  $\delta_{H_z}\(\frac{w+z}2\)$ (for any choice of $H_w$) are comparable for any $z,w\in \bar D$.
\begin{lem}\label{lem:dH<dD} Let $D\in \mathcal S_Q$. Then
\begin{itemize}
\item[i)] $D$ is bounded,
\item [ii)] for every $x,y\in  D$ we have
\begin{align}\label{eq:dH<dD}
\delta_{D}\(\frac{x+y}2\)\leq \delta_{H_x}\(\frac{x+y}2\)\leq \frac3{Q_D} \delta_{D}\(\frac{x+y}{2}\),
\end{align}
where $H_x$ is any half-space such that $\delta_D(x)=\delta_{H_x}(x)$ and $D\subset H_x$.
\end{itemize} 
\end{lem}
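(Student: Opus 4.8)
The plan is to prove the two assertions separately: (ii) first — it needs only $Q_D>0$, not boundedness — and then (i) by contradiction.

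\emph{(ii).} The left inequality is immediate: $D\subset H_x$ gives $D^c\supset H_x^c$, so for $m:=\frac{x+y}2\in D$ one has $\delta_D(m)=\dist(m,D^c)\le\dist(m,H_x^c)=\delta_{H_x}(m)$. For the right inequality I would first set up the geometry. Let $x'$ be the nearest point of $\partial H_x$ to $x$; since $\partial H_x\subset D^c$ (as $D$ is open) and $|x-x'|=\delta_{H_x}(x)=\delta_D(x)$, the point $x'$ realises the distance of $x$ to $\partial D$, hence $x'\in\partial D$, its unique supporting hyperplane is $\partial H_x$ (so $H_{x'}=H_x$), and $x=x'+\delta_D(x)\,n_{x'}$ with $n_{x'}$ the inward unit normal at $x'$. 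Likewise fix a foot $y'\in\partial D$ of $y$, so $y=y'+\delta_D(y)\,n_{y'}$. Put $m_0:=\frac{x'+y'}2\in\bar D$; then $|m-m_0|\le\frac12(\delta_D(x)+\delta_D(y))$, and by concavity of $\delta_D$ on $D$ this is $\le\delta_D(m)$, so also $\delta_D(m_0)\le\delta_D(m)+|m-m_0|\le 2\delta_D(m)$.

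If $x'=y'$ then $n_{x'}=n_{y'}$, $m$ lies on the normal ray at $x'$, and $\delta_{H_x}(m)=\frac12(\delta_D(x)+\delta_D(y))\le\delta_D(m)\le\delta_{H_x}(m)$, so $\delta_{H_x}(m)=\delta_D(m)$ and (ii) holds as $0<Q_D\le1$. If $x'\ne y'$, the definition of $Q_D$ applied to the pair $w=x',\ z=y'$ gives $\delta_{H_x}(m_0)=\delta_{H_{x'}}(m_0)\le Q_D^{-1}\delta_D(m_0)$, and since $\delta_{H_x}$ is $1$-Lipschitz,
$$\delta_{H_x}(m)\le\delta_{H_x}(m_0)+|m-m_0|\le Q_D^{-1}\delta_D(m_0)+\delta_D(m)\le\big(2Q_D^{-1}+1\big)\delta_D(m)\le 3Q_D^{-1}\,\delta_D(m),$$
the last step using $Q_D\le1$. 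This is (ii).

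\emph{(i).} Assume $D$ is unbounded; I will show $Q_D=0$. For any $w\ne z$ in $\partial D$ one has $\frac{w+z}2\in D$, and since $D\subset H_w\cap H_z$ with $\delta_{H_w}(w)=\delta_{H_z}(z)=0$,
$$\delta_D\Big(\tfrac{w+z}2\Big)\le\min\Big(\delta_{H_w}\big(\tfrac{w+z}2\big),\,\delta_{H_z}\big(\tfrac{w+z}2\big)\Big)=\tfrac12\min\big(\delta_{H_w}(z),\,\delta_{H_z}(w)\big);$$
using this for the pairs $(w,z)$ and $(z,w)$ yields $Q_D\le\min(M,N)/\max(M,N)$ with $M=\delta_{H_w}(z),\ N=\delta_{H_z}(w)$. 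So it suffices to produce $w_0\in\partial D$ and $z_k\in\partial D$ with $|z_k|\to\infty$ along which $N_k/M_k\to0$. Strict convexity gives two facts: the recession cone $K$ of $\bar D$ is pointed, and for every recession direction $v\ne0$ and every $w\in\partial D$ one has $v\cdot n_w>0$ — otherwise $\{w+tv:t\ge0\}$ would be a ray lying in both $P_w\subset D^c$ and $\bar D$, i.e. a segment inside $\partial D$. Now take $z_k\in\partial D$, $|z_k|\to\infty$ (the boundary of an unbounded convex domain is unbounded), and, along a subsequence, $z_k/|z_k|\to u\in K$ and $n_{z_k}\to n_\infty$. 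Writing $\bar D=\bigcap_{w\in\partial D}H_w$, the supporting relation gives $\sup_{\xi\in\bar D}\xi\cdot(-n_{z_k})=-z_k\cdot n_{z_k}\ge-o\cdot n_{z_k}$ for a fixed $o\in D$, hence $z_k\cdot n_{z_k}\le|o|$ and $u\cdot n_\infty=\lim z_k\cdot n_{z_k}/|z_k|\le0$; combined with $u\cdot n_{z_k}\ge0$ (as $u\in K$) this forces $u\cdot n_\infty=0$, so $z_k\cdot n_{z_k}=o(|z_k|)$ and $N_k=(w_0-z_k)\cdot n_{z_k}=o(|z_k|)$ for any fixed $w_0$. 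Finally choose $w_0\in\partial D$ with $u\cdot n_{w_0}>0$ — such $w_0$ exists, else $\bar D$ would be invariant under the line $\R u$, contradicting strict convexity — so that $M_k=(z_k-w_0)\cdot n_{w_0}\sim(u\cdot n_{w_0})|z_k|\to\infty$. Then $N_k/M_k\to0$, forcing $Q_D=0$, contrary to $D\in\mathcal S_Q$; hence $D$ is bounded.

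The routine parts — concavity and $1$-Lipschitzness of $\delta_D$, and the reflection formula for $\delta_{H_x}$ — are easy. The genuinely delicate point, and the step I expect to be the main obstacle, is in (i): verifying that along a boundary sequence escaping to infinity the companion distances $N_k$ grow only sublinearly, i.e. pinning down $u\cdot n_\infty=0$ via the support–function estimate — this is exactly the statement that the far–away tangent hyperplanes ``rotate'' enough to stay at negligible distance from the fixed point $w_0$.
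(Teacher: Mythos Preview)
Your proof of (ii) is essentially identical to the paper's: same feet $x',y'$, same midpoint shift $m_0=\frac{x'+y'}2$, same application of the $Q_D$-definition at $(x',y')$ followed by the triangle inequality to pass back to $m=\frac{x+y}2$, and the same constant $3/Q_D$.

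For (i) your argument is correct but takes a genuinely different route. The paper fixes a boundary point $w$ from which a recession ray emanates, slices $D$ by a $2$-plane through that ray, writes the resulting boundary curve as the graph of a strictly convex $C^{1,1}$ function $f$, and then applies L'H\^opital's rule to show that
\[
\frac{\delta_D\big(\tfrac{w+z}{2}\big)}{\delta_{H_w}\big(\tfrac{w+z}{2}\big)}\longrightarrow 0
\]
as $z=(x,f(x))$ runs off to infinity along the curve. Your approach avoids the $2$D reduction and any calculus: from the reciprocity bound $Q_D\le\min(M,N)/\max(M,N)$ you reduce the problem to showing $N_k=o(|z_k|)$ while $M_k\asymp|z_k|$, and you derive both from the limit identities $u\cdot n_\infty=0$ and $u\cdot n_{w_0}>0$ for the recession direction $u$. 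This is more abstract and arguably cleaner --- it uses only convex-geometric facts (support functions, recession cone) and nowhere the $C^{1,1}$ regularity beyond uniqueness of $n_w$ --- whereas the paper's route is more hands-on and makes the decay explicit at the cost of coordinates and L'H\^opital. One small redundancy: having already established $v\cdot n_w>0$ for every nonzero recession direction $v$ and every $w\in\partial D$, you do not need the separate ``invariance under $\R u$'' argument to locate $w_0$ --- any boundary point works.
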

\begin{proof}
$i)$ Assume $D$ is an unbounded strictly convex $\mathcal C^{1,1}$ set with $Q_D>0$. There exists a half-line $l$ starting at some point $w\in \partial D$ and contained in $\overline{D}$. Let $P$ be a ($2$-dimensional) plane containing $l$ and equipped with coordinate system of axes $Ox$ and $Oy$ such that $l$ is the nonnegative half-line of $Oy$.  The intersection $P\cap \partial D$ is then a graph of a strictly convex $\mathcal C^{1,1}$ function $f(x)$.  Without loss of the generality we may  assume that $f$ is increasing on $[0,\infty)$. The distance of any point from $P$ of the form $(x,f(x))$ to the hyper-plane $H_w$ is proportional to its distance to a line in $P$ tangent to $f$ at $(0,0)$ (i.e. the point $w$). Hence, since $f$ is strictly convex and increasing on $[0,\infty)$, there is a constant $c_1>0$ such that $\delta_{H_w}((x,f(x)))>c_1 f(x)$ for $x>1$. Additionally, for $y>f(x)$ we clearly have $\delta_D((x,y))\leq\delta _{D\cap P}\((x,y)\)\leq \(y-f(x)\)\wedge\(f^{-1}(y)-x\)$, where $f^{-1}$ is inverse of $f$ on $[0,\infty)$ and $(x,y)$ is a point on $P\subset \R^n$ in coordinated introduced on $P$. Taking $z=(x,f(x))\in \partial D$, $x>1$, we get
$$\frac{\delta_D\(\frac{w+z}{2}\)}{\delta_{H_{w}}\(\frac{w+z}2\)}\leq \frac{\(\frac12f(x)-f\(\frac12x\)\)\wedge\(\frac12x\)}{\frac12c_1f(x)}=:g(x).$$
We will show that $g(x)$ tends to zero as $x\rightarrow\infty$, which contradicts the assumption $Q_D>0$. From strict convexity of $f$ we have $\lim _{x\rightarrow\infty}f(x)=\infty$ and $f'(x), \frac12f(x)-f\(\frac12x\)$ are increasing for $x\geq0$. If $\frac12f(x)-f\(\frac12x\)$ is bounded, then $g(x)$ clearly tends to zero. In the other case, it is when $\frac12f(x)-f\(\frac12x\)$ tends to infinity, we employ L'H\^opital's rule and obtain
$$\limsup_{x\rightarrow\infty}\frac{\delta_D\(\frac{w+z}{2}\)}{\delta_{H_{w}}\(\frac{w+z}2\)}\leq \lim_{x\rightarrow\infty}\frac{\(f'(x)-f'\(\frac12x\)\)\wedge1}{c_1f'(x)}.$$
Since $f'(x)$ tends either to a constant or to infinity, the last limit equals zero, as required.

$ii)$ The first inequality follows simply from the inclusion $D\subset H_x$. Let $x', y'\in \partial D$ be points realising distances of $x$ and $y$, respectively, to the boundary $\partial D$, i.e. such that  $|x-x'|=\delta_D(x)$ and $|y-y'|=\delta_D(y)$, and denote $H_x=H_{x'}$, $H_y=H_{y'}$. If $x'=y'$, then $\d_{H_x}\(\frac{x+y}2\)=\d_{D}\(\frac{x+y}2\)$ and \eqref{eq:dH<dD} holds since $Q_D\leq1$. In case $x'\neq y'$ we observe
\begin{align*}
\left|\frac{x+y}2-\frac{x'+y'}2\right|&\leq\frac12|x-x'|+\frac12|y-y'|=\frac12\(\delta_D(x)+\delta_D(y)\)\leq \delta_{D}\(\frac{x+y}{2}\),
\end{align*}
where the last inequality follows from convexity of $D$. Consequently, 
\begin{align*}
\delta_{H_x}\(\frac{x+y}{2}\)&\leq \delta_{H_x}\(\frac{x'+y'}{2}\)+\left|\frac{x+y}2-\frac{x'+y'}2\right|\\
&\leq\frac1{Q_D} \delta_{D}\(\frac{x'+y'}{2}\)+\delta_{D}\(\frac{x+y}{2}\)\\
&\leq \frac1{Q_D}\( \delta_{D}\(\frac{x+y}{2}\)+\left|\frac{x+y}2-\frac{x'+y'}2\right|\)+\delta_{D}\(\frac{x+y}{2}\)\\
&\leq \(\frac2{Q_D}+1\) \delta_{D}\(\frac{x+y}{2}\)\leq \frac3{Q_D}\delta_{D}\(\frac{x+y}{2}\),
\end{align*}
where we used the inequality $Q_D\leq 1$.
\end{proof}
Theorem \ref{cor:mainlower} and Lemma \ref{lem:dH<dD}$ii)$ applied to   Corollary  \ref{cor:mainupper} follow directly
\begin{cor}\label{cor:SQest}
If  $D\in \mathcal S_Q$   then
\begin{align*}
&p_D(t,x,y)\\
&\stackrel{r,Q_D,T}{\approx}  p(t,x,y)\[\(1\wedge\frac{\delta_D(x)\delta_D(y)}{t}\)+\(1\wedge \frac{\d_{D}(x)\d_{D}\(\frac{x+y}2\)}{t}\)\(1\wedge \frac{\d_{D}(y)\delta_{D}\(\frac{x+y}{2}\)}{t}\)\]\\
&\stackrel{\phantom{n,r,}Q_D\phantom{,T}}{\approx}  p(t,x,y)\[\(1\wedge\frac{\delta_D(x)\delta_{D}(y)}{t}\)+\(1\wedge \frac{\d_{H_x}(x)\d_{H_x}\(y\)}{t}\)\(1\wedge \frac{\d_{H_y}(y)\delta_{H_y}\(x\)}{t}\)\].
\end{align*}
holds for $x,y\in D$, $0<t<T$.
\end{cor}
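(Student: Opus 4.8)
The plan is to read off both equivalences from estimates already in hand, the only new ingredient being the sandwich $\delta_D\bigl(\tfrac{x+y}{2}\bigr)\le\delta_{H_x}\bigl(\tfrac{x+y}{2}\bigr)\le\tfrac3{Q_D}\delta_D\bigl(\tfrac{x+y}{2}\bigr)$ of Lemma~\ref{lem:dH<dD}(ii) (and its analogue for $H_y$), combined with two elementary observations: $\delta_{H_x}$ is affine on $H_x\supset D$, so $\delta_{H_x}\bigl(\tfrac{x+y}{2}\bigr)=\tfrac12\bigl(\delta_{H_x}(x)+\delta_{H_x}(y)\bigr)$; and $\delta_D$ is concave on the segment $\overline{xy}\subset D$, so $\delta_D\bigl(\tfrac{x+y}{2}\bigr)\ge\tfrac12\bigl(\delta_D(x)+\delta_D(y)\bigr)$, and in particular $\delta_D(x),\delta_D(y)\le2\,\delta_D\bigl(\tfrac{x+y}{2}\bigr)$. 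For brevity I would write $a=\delta_D(x)=\delta_{H_x}(x)$, $b=\delta_D(y)=\delta_{H_y}(y)$, $M=\delta_D\bigl(\tfrac{x+y}{2}\bigr)$, $b'=\delta_{H_x}(y)$, $a'=\delta_{H_y}(x)$, so that the two bracketed non-exponential factors in the statement are $S=\bigl(1\wedge\tfrac{ab}{t}\bigr)+\bigl(1\wedge\tfrac{aM}{t}\bigr)\bigl(1\wedge\tfrac{bM}{t}\bigr)$ and $S'=\bigl(1\wedge\tfrac{ab}{t}\bigr)+\bigl(1\wedge\tfrac{ab'}{t}\bigr)\bigl(1\wedge\tfrac{a'b}{t}\bigr)$; I will use repeatedly that $1\wedge(cu)\le c\,(1\wedge u)$ for $c\ge1$.

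The first $\approx$ is immediate. The lower bound $p_D(t,x,y)\gtrsim p(t,x,y)\,S$ is exactly \eqref{eq:lower2} of Theorem~\ref{cor:mainlower}. For the upper bound I would invoke Corollary~\ref{cor:mainupper} and, using that by Lemma~\ref{lem:dH<dD}(ii) both $\delta_{H_x}\bigl(\tfrac{x+y}{2}\bigr)$ and $\delta_{H_y}\bigl(\tfrac{x+y}{2}\bigr)$ are at most $\tfrac3{Q_D}M$ together with $\delta_{H_x}(x)=a$, $\delta_{H_y}(y)=b$, bound the product term there by $\bigl(\tfrac3{Q_D}\bigr)^2$ times the product term of $S$; this yields $p_D(t,x,y)\lesssim p(t,x,y)\,S$, with the constant now depending on $r,Q_D,T$.

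For the second $\approx$ it suffices to show $S\approx S'$ uniformly, with constant depending only on $Q_D$. Since $\delta_{H_x}$ is affine, $b'=2\delta_{H_x}\bigl(\tfrac{x+y}{2}\bigr)-a\le\tfrac6{Q_D}M$, and symmetrically $a'\le\tfrac6{Q_D}M$; substituting these gives $S'\lesssim S$. For the reverse inequality I would split on the size of $M$. If $M\ge\max(a,b)$, then $b'\ge2\delta_{H_x}\bigl(\tfrac{x+y}{2}\bigr)-a\ge2M-a\ge M$, so Lemma~\ref{lem:dH<dD}(ii) forces $b'\approx M$ up to the constant $6/Q_D$, and likewise $a'\approx M$, whence $S'\approx S$ directly. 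If instead $M<\max(a,b)$, say $a=\max(a,b)$, then $M<a\le2M$ gives $a\approx M$, so the product term of $S$ equals, up to constants, $\bigl(1\wedge\tfrac{a^{2}}{t}\bigr)\bigl(1\wedge\tfrac{ab}{t}\bigr)\le 1\wedge\tfrac{ab}{t}$, and hence $S\lesssim 1\wedge\tfrac{ab}{t}\le S'$. This establishes $S\approx S'$, and combined with the first $\approx$ it proves the corollary.

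I do not expect a genuine obstacle: the whole proof is a bookkeeping assembly of Theorem~\ref{cor:mainlower}, Corollary~\ref{cor:mainupper} and Lemma~\ref{lem:dH<dD}(ii). The only step needing a little attention is the bound $S\lesssim S'$, because replacing the off-diagonal distances $b'=\delta_{H_x}(y)$, $a'=\delta_{H_y}(x)$ by $M$ loses information exactly when $M$ is comparable to $\min(a,b)$ but not to $\max(a,b)$; one then has to notice that in that regime the summand $1\wedge\tfrac{ab}{t}$, which occurs identically in $S$ and $S'$, already dominates — which is precisely the content of the case $M<\max(a,b)$ above.
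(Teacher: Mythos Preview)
Your proof is correct and follows exactly the route the paper indicates: the first $\approx$ is obtained by combining the lower bound \eqref{eq:lower2} of Theorem~\ref{cor:mainlower} with Corollary~\ref{cor:mainupper} after replacing $\delta_{H_x}\bigl(\tfrac{x+y}{2}\bigr)$ and $\delta_{H_y}\bigl(\tfrac{x+y}{2}\bigr)$ by $\delta_D\bigl(\tfrac{x+y}{2}\bigr)$ via Lemma~\ref{lem:dH<dD}(ii). The paper does not spell out the second $\approx$ at all (it only says the corollary ``follows directly''), and your case analysis --- splitting on whether $M\ge\max(a,b)$ or not and, in the latter regime, noting that the summand $1\wedge\tfrac{ab}{t}$ already dominates the product term of $S$ --- is a clean way to supply that missing detail with a constant depending only on $Q_D$, as the statement requires.
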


Moreover, it turns out  that $\mathcal S_Q$ is the exact subclass of $\mathcal C^{1,1}$ domains for which the lower bound from Theorem \ref{cor:mainlower} is equivalent (up to a multiplicative constant) to the upper bound.

\begin{thm}\label{thm:SQ}
Let $D$ be a strictly convex $\mathcal C^{1,1}$ set. Then   $D\in \mathcal S_Q$   if and only if  
\begin{align}\label{eq:2sided}
&p_D(t,x,y)\\\nonumber
&\stackrel{D,T}\approx  p(t,x,y)\[\(1\wedge\frac{\delta_D(x)\delta_D(y)}{t}\)+\(1\wedge \frac{\d_{D}(x)\d_{D}\(\frac{x+y}2\)}{t}\)\(1\wedge \frac{\d_{D}(y)\delta_{D}\(\frac{x+y}{2}\)}{t}\)\].
\end{align}
holds for $x,y\in D$, $0<t<T$.
\end{thm}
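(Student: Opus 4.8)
The ``only if'' direction is the easier one and I would attack it by contraposition: assuming $D\in\mathcal C^{1,1}$ strictly convex with $Q_D=0$, I want to exhibit space arguments $x,y\in D$ and a time $t<T$ at which the lower bound in \eqref{eq:2sided} is strictly smaller (by an arbitrarily large factor) than the upper bound from Theorem \ref{thm:upper}. The idea is to use the definition of $Q_D$: $Q_D=0$ means there are boundary points $w,z$ with the midpoint $m=\frac{w+z}2$ satisfying $\delta_D(m)\ll\delta_{H_w}(m)$. Taking $x,y$ to be the near-boundary points obtained by pushing $w,z$ slightly into $D$ along the inner normals (so that $H_x=H_w$, $H_y=H_z$ and $\frac{x+y}2\approx m$), and choosing $t$ of order $\delta_D(x)\delta_{H_x}(\tfrac{x+y}2)$ say, the upper-bound factor $\bigl(1\wedge\frac{\delta_{H_x}(x)\delta_{H_x}(y)}{t}\bigr)\bigl(1\wedge\frac{\delta_{H_y}(y)\delta_{H_y}(x)}{t}\bigr)$ from Theorem \ref{thm:upper} is of order $1$ while the right-hand side of \eqref{eq:2sided}, whose non-exponential factor is controlled by $\bigl(1\wedge\frac{\delta_D(x)\delta_D(\frac{x+y}2)}{t}\bigr)\bigl(1\wedge\frac{\delta_D(y)\delta_D(\frac{x+y}2)}{t}\bigr)+\bigl(1\wedge\frac{\delta_D(x)\delta_D(y)}{t}\bigr)$, is small because $\delta_D(m)$ is small relative to $\delta_{H_w}(m)$. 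Letting $w,z$ run through a sequence realising $Q_D=0$ forces the ratio of the two sides to be unbounded, contradicting \eqref{eq:2sided}. Some care is needed with the ``$+\bigl(1\wedge\frac{\delta_D(x)\delta_D(y)}{t}\bigr)$'' term: one must check that by taking $\delta_D(x),\delta_D(y)$ small enough (much smaller than $\delta_D(m)$), this term does not rescue the lower bound. This is a matter of choosing the free parameters (how far into $D$ we push, and $t$) appropriately.

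For the ``if'' direction, suppose $D\in\mathcal S_Q$. I would simply invoke Corollary \ref{cor:SQest}, which already states exactly \eqref{eq:2sided} (with constants depending on $r,Q_D,T$, hence on $D,T$). Indeed Corollary \ref{cor:SQest} is obtained by combining the lower bound of Theorem \ref{cor:mainlower}, the upper bound of Corollary \ref{cor:mainupper}, and Lemma \ref{lem:dH<dD}(ii), the latter providing the two-sided comparison $\delta_D(\tfrac{x+y}2)\approx\delta_{H_x}(\tfrac{x+y}2)$ that makes the upper and lower non-exponential factors match up to a constant. So in this direction there is essentially nothing new to prove beyond citing the earlier results; the only point to stress is that $D\in\mathcal S_Q$ implies (by Lemma \ref{lem:dH<dD}(i)) that $D$ is bounded and hence lies in $\mathcal C^{1,1}_r$ for some $r>0$, so that the hypotheses of Theorem \ref{cor:mainlower} and Corollary \ref{cor:mainupper} are met.

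The main obstacle is the quantitative bookkeeping in the ``only if'' direction: one has to translate the purely geometric failure ``$Q_D=0$'' into a genuine gap between the heat-kernel bounds, and this requires picking $x,y$ and $t$ so that (a) the exponential factors $p(t,x,y)$ cancel on both sides (automatic, since both bounds carry the same $p(t,x,y)$), (b) the upper bound's factor is bounded below by a positive constant, and (c) the lower bound's factor, including the additive $\bigl(1\wedge\frac{\delta_D(x)\delta_D(y)}{t}\bigr)$ piece, tends to $0$. The delicate part is (c) together with (b): one needs the regime $\delta_D(x)\delta_D(\tfrac{x+y}2)<t$ (so the min is not saturated at $1$) while keeping $\delta_{H_x}(x)\delta_{H_x}(y)\gtrsim t$; since $\delta_D(x)=\delta_{H_x}(x)$ and $\delta_{H_x}(y)\approx\delta_{H_x}(\tfrac{x+y}2)$ (as $x,y$ are near-boundary and close to $w,z$), this amounts to $\delta_D(x)\delta_D(m)<t\lesssim\delta_D(x)\delta_{H_x}(m)$, which is exactly possible precisely when $\delta_D(m)/\delta_{H_x}(m)$ is small — i.e. along a $Q_D$-minimising sequence. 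One must also verify the near-boundary points can indeed be chosen with $H_x=H_w$ and $\delta_D(x)=\delta_{H_x}(x)$; this uses the inner ball condition and strict convexity, and I would spell it out with a short lemma-style paragraph. Once the parameters are fixed, plugging into Theorem \ref{thm:upper} and into the right-hand side of \eqref{eq:2sided} and letting the $Q_D$-minimising sequence run yields the desired contradiction.
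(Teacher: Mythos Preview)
Your ``if'' direction is fine and matches the paper: once $D\in\mathcal S_Q$, Lemma \ref{lem:dH<dD}(i) gives boundedness (hence $D\in\mathcal C^{1,1}_r$ for some $r$), and Corollary \ref{cor:SQest} delivers \eqref{eq:2sided}.

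The ``only if'' direction, however, has a genuine gap. Your plan is to show that the right-hand side of \eqref{eq:2sided} is much smaller than the upper bound from Theorem \ref{thm:upper}. But Theorem \ref{thm:upper} is only an \emph{upper} bound for $p_D$; showing that the RHS of \eqref{eq:2sided} is $\ll$ this upper bound yields no contradiction at all (an upper bound may be arbitrarily non-sharp). What you actually need is a \emph{lower} bound on $p_D(t,x,y)$ that involves $\delta_{H_x}\!\bigl(\tfrac{x+y}{2}\bigr)$ rather than $\delta_D\!\bigl(\tfrac{x+y}{2}\bigr)$, so that it beats the assumed upper estimate in \eqref{eq:2sided} along a $Q_D$-minimising sequence. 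None of the available lower bounds (Theorems \ref{thm:lower}, \ref{cor:mainlower}) provide this: they all carry only $\delta_D$ factors, and in any case require $D\in\mathcal C^{1,1}_r$, which is not assumed here.

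The paper fills this gap by a different construction: it takes $y=\tfrac{w+z}{2}$ (not a point pushed in from $z$), $x$ close to $w$ on the segment $\overline{wz}$, and derives directly, via Chapman--Kolmogorov through a point $m'$ near $w$ with $\delta_D(m')\approx\sqrt t$ together with Lemma \ref{lem:S1L} (which applies for any $\mathcal C^{1,1}$ convex $D$ without the uniform radius assumption), a lower bound
\[
p_D\Bigl(t,x,\tfrac{w+z}{2}\Bigr)\;\gtrsim\;\Bigl(1\wedge\frac{\delta_D(x)\,\delta_{H_w}\!\bigl(\tfrac{w+z}{2}\bigr)}{t}\Bigr)\,p\Bigl(t,x,\tfrac{w+z}{2}\Bigr).
\]
Comparing this with the assumed \eqref{eq:2sided} (whose non-exponential factor here collapses to $\lesssim 1\wedge\frac{\delta_D(x)\,\delta_D(\frac{w+z}{2})}{t}$ by \eqref{eq:supdD}) and sending $\delta_D(x)\to0$ yields $\delta_D\!\bigl(\tfrac{w+z}{2}\bigr)\big/\delta_{H_w}\!\bigl(\tfrac{w+z}{2}\bigr)\geq c>0$ uniformly, i.e.\ $Q_D>0$. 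The missing idea in your plan is precisely this ad hoc lower bound; without it the argument does not close.
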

\begin{proof}
If $Q_D>0$, the estimate \eqref{eq:2sided} follows from Corollary \ref{cor:SQest}. 

Let us  assume that \eqref{eq:2sided} holds for all $x,y\in D$ and $0<t<T$ for some $T>0$, and consider $w,z\in\partial D$ such that $w\neq z$. Since $D$ is a $\mathcal C^{1,1}$ set, there is a ball of radius $r>0$ contained in $D$ and tangent to it at $w$, which ensures existence of a point $m$ from the interval $\overline{wz}$ such that $|m-w|<|w-z|/4$ and
\begin{align}\label{eq:dm>dHz}
\delta_D\(m\)>\frac12\delta_{H_{w}}(m).
\end{align}
Set
\begin{align}\label{eq:t=min}
\sqrt t=\min\left\{r,\frac16\delta_D\(m\), \sqrt T \right\},
\end{align}
and let $m'$ be a point at the interval $\overline{wm}$ such that $\delta_D(m')=6\sqrt t$. Then, from \eqref{eq:dm>dHz} and Proposition \ref{prop:mono} we have
 \begin{align}\label{eq:dm'}
\delta_D\(m'\)\approx  \delta_{H_w}(m')\approx \sqrt t.
\end{align}
Since we are going to apply  estimates from \eqref{eq:2sided} and  $w\notin D$,  we approximate $w$  by a point from $D$. Precisely, let $x$  be a point from $\overline{wm'}$ such that $|w-x|<\sqrt t$ and let us put $\alpha:=\frac{|x-m'|}{\left|x-\frac{w+z}2\right|}<\frac12$.
Then, Chapman-Kolmogorov identity  gives us
\begin{align}\label{eq:C-K5}
p_D\(t,x,\frac{w+z}2\)&\geq \int_{B(m',\sqrt{\alpha t})}p_D(\alpha\,t,x,v)p_D\((1-\alpha)t,v,\frac{w+z}2\)dv.
\end{align}
For $v\in B(m',\sqrt{\alpha t})$ we have $|x-v|\geq 4\sqrt t$ and $\delta_D(v)\geq\frac56 \delta_D(m')\geq 5\sqrt t $, which allows us  to employ Lemma \ref{lem:S1L} (in the same manner as in  the proof of Theorem \ref{thm:lower}) and get
$$p_D(\alpha t,x,v)\gtrsim \(1\wedge\frac{\delta_D(x)\sqrt t}{\alpha t}\)p(\alpha t,x,v)\approx \(1\wedge\frac{\delta_D(x)\delta_D(m')}{\alpha t}\)p(\alpha t,x,v).$$
Note that we could have not used Theorem  \ref{thm:lower}, since we  assume $D$ to be any strictly convex $C^{1,1}$ set, so it may not belong to any $\mathcal C^{1,1}_r$, $r>0$. Next, intercept theorem and the inequality \eqref{eq:dm>dHz} give us 
\begin{align*}
\delta_D(m')&\gtrsim\delta_{H_{w}}(m')
=2\frac{|w-m'|}{|w-z|}\delta_{H_{w}}\(\frac{w+z}2\)\\
&\geq \frac{|x-m'|}{2\left|x-\frac{w+z}2\right|}\delta_{H_{w}}\(\frac{w+z}2\)=\frac12\alpha\, \delta_{H_{w}}\(\frac{w+z}2\),
\end{align*}
which follows
$$p_D(\alpha t,x,w)\gtrsim\(1\wedge\frac{\delta_D(x)\delta_{H_{w}}\(\frac{w+z}2\)}{ t}\)p(\alpha t,x,w).$$
Furthermore, by \eqref{eq:vdB}, we have
\begin{align*}
p_D\((1-\alpha)t,v,\frac{w+z}2\)\gtrsim p\((1-\alpha)t,v,\frac{w+z}2\).
\end{align*}
Applying the last two bounds to \eqref{eq:C-K5} and using Corollary \ref{cor:CKlow}, we conclude
\begin{align}\label{eq:pD>c_3}
p_D\(t,x,\frac{w+z}2\)&\geq c_1\(1\wedge\frac{\delta_D(x)\delta_{H_{w}}\(\frac{w+z}2\)}{ t}\) p\(t,x,\frac{w+z}2\),
\end{align}
for some $c_1>0$. On the other hand, convexity of $D$ implies
 \begin{align}\label{eq:supdD}
 \delta_D\(\frac{w+z}2\)\geq \frac12 \sup_{v\in\overline{wz}}\{\delta_D\(v\)\},
 \end{align} and consequently, by \eqref{eq:2sided},
 \begin{align}\nonumber
 p_D\(t,x,\frac{w+z}2\)&\leq c_2 p\(t,x,\frac{w+z}2\)\\\nonumber
 &\ \ \ \times\[\(1\wedge\frac{\delta_D(x)\delta_{D}\(\frac{w+z}2\)}{ t}\)+\(1\wedge\frac{\delta_D(x)\delta_{D}\(\frac12 x+\frac14w+\frac14 z\)}{ t}\)\]\\
 \label{eq:pD<c_4}
 &\leq 3c_2\(1\wedge\frac{\delta_D(x)\delta_{D}\(\frac{w+z}2\)}{ t}\)p\(t,x,\frac{w+z}2\),
 \end{align}
 for some $c_2(n,r,T)>0$. Finally, comparing \eqref{eq:pD>c_3} with \eqref{eq:pD<c_4} and taking $\delta(x)$ sufficiently small, we arrive at 
 $$\frac{\delta_D\(\frac{w+z}{2}\)}{\delta_{H_{w}}\(\frac{w+z}2\)}\geq \frac{c_1}{3c_2},$$
 valid for any $w,z\in\partial D$, which is equivalend to  $Q_D\geq \frac{c_1}{3c_2}>0$.
\end{proof}

After relaxing the condition $Q_D>0$ into $R_D>0$, the heat kernel $p_D(t,x,y)$ keeps admitting  two-sided estimates of the form of the upper bound from Theorem  \ref{thm:upper}.

\begin{thm}\label{thm:SF}
If    $D\in \mathcal S_R$, then  
\begin{align*}
p_D(t,x,y)
\stackrel{r,T,R_D}{\approx}  p(t,x,y)\[\(1\wedge \frac{\d(x)\d(y)}{t}\)+\(1\wedge \frac{\d_{H_x}(x)\d_{H_x}\(y\)}{t}\)\(1\wedge \frac{\d_{H_y}(y)\delta_{H_y}\(x\)}{t}\)\].
\end{align*}
holds for $x,y\in D$, $0<t<T$, where $H_x$, $H_y$ are any half-spaces such that $D\subset H_x,H_y$ and $\delta_D(x)=\delta_{H_x}(x)$, $\delta_D(y)=\delta_{H_y}(y)$.
\end{thm}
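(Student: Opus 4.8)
Since the right-hand side of Theorem \ref{thm:SF} coincides with that of Theorem \ref{thm:upper}, the upper bound is already proved, and only the matching lower bound needs an argument. Write $a=\delta_D(x)=\delta_{H_x}(x)$, $b=\delta_D(y)=\delta_{H_y}(y)$, $a^{*}=\delta_{H_y}(x)\ge a$, $b^{*}=\delta_{H_x}(y)\ge b$; the claim is that $p_D(t,x,y)\gtrsim p(t,x,y)\bigl(1\wedge\tfrac{ab}{t}\bigr)+p(t,x,y)\bigl(1\wedge\tfrac{ab^{*}}{t}\bigr)\bigl(1\wedge\tfrac{a^{*}b}{t}\bigr)$ with constants depending only on $n,r,R_D,T$. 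The first summand is exactly Theorem \ref{thm:lower}, so the whole task is the second summand; moreover if $b^{*}\le 2b$ (resp. $a^{*}\le 2a$) the second summand is $\lesssim (1\wedge\tfrac{ab}{t})$ and we are again done by Theorem \ref{thm:lower}, so we may assume $b^{*}>2b$ and $a^{*}>2a$. As in the proofs of Theorems \ref{thm:lower}--\ref{cor:mainlower}, I would first reduce to $t<T$ with $T$ small relative to $r$ and $R_D$ (so that in particular $\sqrt t<R_D^{2}$), the large-$t$ range being recovered afterwards by the device at the end of the proof of Theorem \ref{thm:lower}. Let $x',y'\in\partial D$ be the feet of $x,y$ and set $d_0=\delta_D\bigl(\tfrac{x'+y'}{2}\bigr)$; I split into $d_0\le 1$ and $d_0>1$.

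\textbf{Small scale ($d_0\le 1$).} Here the midpoint clause in the definition of $R_D$, applied to the pair $(x',y')$, together with $\delta_{H_{x'}}(x')=0$ and affineness of $\delta_{H_{x'}}$ along $\overline{x'y'}$, gives $\delta_{H_{x'}}(y')\le\tfrac{2}{R_D}d_0$. Combining this with $b^{*}=\delta_{H_x}(y)\le\delta_{H_{x'}}(y')+b$, with $d_0\le 2\delta_D(\tfrac{x+y}{2})$ and $a,b\le 2\delta_D(\tfrac{x+y}{2})$ (all from concavity of $\delta_D$ along segments in the convex set $D$, as in Lemma \ref{lem:dH<dD}), and with the standing assumption $b^{*}>2b$, yields $b^{*}\lesssim_{R_D}\delta_D(\tfrac{x+y}{2})$, and symmetrically $a^{*}\lesssim_{R_D}\delta_D(\tfrac{x+y}{2})$. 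Hence the second summand is $\lesssim_{R_D}\bigl(1\wedge\tfrac{a(\delta_D(\frac{x+y}{2})+\sqrt t)}{t}\bigr)\bigl(1\wedge\tfrac{b(\delta_D(\frac{x+y}{2})+\sqrt t)}{t}\bigr)$, and we conclude from the lower bound \eqref{eq:lower1} of Theorem \ref{cor:mainlower}. (Since $d_0<1$ whenever $b^{*}<\sqrt t/R_D^{2}$ or $a^{*}<\sqrt t/R_D^{2}$, these borderline ranges are absorbed here.)

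\textbf{Large scale ($d_0>1$).} For $T$ small one checks $b^{*},a^{*}>1$, and, since the second summand is relevant only when $\tfrac{ab^{*}}{t}<1$ and $\tfrac{a^{*}b}{t}<1$, one may assume $a,b$ small. The heart of the matter is a three-leg Chapman--Kolmogorov decomposition with two \emph{short} outer legs. Put $\beta_1=c\sqrt t/b^{*}$ and $\beta_2=c\sqrt t/a^{*}$ for a small absolute $c$ (legitimate because $b^{*},a^{*}>1>\sqrt t/R_D^{2}$), let $m_1=(1-\beta_1)x+\beta_1 y$ (near $x$), and apply the semigroup property twice:
$$p_D(t,x,y)\ge\int_{B(m_1,\sqrt{\beta_1 t})}p_D(\beta_1 t,x,z_1)\Bigl[\int_{B(m_2,\sqrt{\beta_2 t})}p_D\bigl((1-\beta_1-\beta_2)t,z_1,z_2\bigr)p_D(\beta_2 t,z_2,y)\,dz_2\Bigr]dz_1,$$
with $m_2$ at fraction $\beta_2/(1-\beta_1)$ of $\overline{z_1 y}$ (near $y$). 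For the first leg (when $a$ is small) I would isometrically place the half-capsule $J_{R_D\sqrt t/10,\,L}$ inside $D$ so that its spherical cap lies inside the inner ball of radius $r$ tangent to $D$ at $x'$, with $x$ on the cap and $\delta_{J'}(x)=a$, and its cylinder runs from $x'$ towards $z_1$ with flat end at $z_1$; the inner/outer ball condition ($D\in\mathcal C^{1,1}_r$) makes the cap fit, and Proposition \ref{prop:mono} together with $R_D>0$ guarantees $\delta_D\ge R_D\sqrt t/10$ along the axis — indeed $z_1$ lies on the portion of $\overline{x'y'}$ where $\delta_D<1$, on which $\delta_D/\delta_{H_{x'}}\ge R_D$ by monotonicity, so $\delta_D(z_1)\gtrsim R_D\sqrt t$ — while $L\gtrsim\beta_1|x-y|\gtrsim R_D\sqrt t$ as Lemma \ref{lem:S1L} requires. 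Applying that lemma with its ``$t$'' equal to $(R_D\sqrt t/10)^2$ and its ``$s$'' equal to $\beta_1 t\le(R_D\sqrt t/10)^2$ (this is exactly where $b^{*}\gtrsim\sqrt t/R_D^{2}$ enters) gives
$$p_D(\beta_1 t,x,z_1)\ge p_{J'}(\beta_1 t,x,z_1)\gtrsim\Bigl(1\wedge\tfrac{a\,R_D\sqrt t}{\beta_1 t}\Bigr)p(\beta_1 t,x,z_1)\gtrsim_{R_D}\Bigl(1\wedge\tfrac{ab^{*}}{t}\Bigr)p(\beta_1 t,x,z_1),$$
the point being that the small leg-time $\beta_1 t$ turns the capsule radius $R_D\sqrt t$ into the large factor $b^{*}$; when $a$ is not small one simply uses \eqref{eq:vdB} instead, since then $x$ and $z_1$ are both much deeper than $\sqrt{\beta_1 t}$. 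A mirror construction gives $p_D(\beta_2 t,z_2,y)\gtrsim_{R_D}(1\wedge\tfrac{a^{*}b}{t})p(\beta_2 t,z_2,y)$. For the long middle leg, $\delta_D(z_1),\delta_D(z_2)\gtrsim R_D\sqrt t$, so Theorem \ref{thm:lower} yields $p_D((1-\beta_1-\beta_2)t,z_1,z_2)\gtrsim_{R_D}p((1-\beta_1-\beta_2)t,z_1,z_2)$. Plugging these in and evaluating the two Gaussian integrals by Corollary \ref{cor:CKlow} — the ball radii $\sqrt{\beta_i t}$ match the natural variance scales, so only absolute constants are lost — reassembles $p(t,x,y)$ and gives the desired bound for the second summand.

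\textbf{Main obstacle.} The difficult part is the large-scale case, and inside it the geometric construction of the two outer half-capsules: one must verify that a capsule of radius $\asymp R_D\sqrt t$ with cap at $x'$ (resp. $y'$) can be placed in $D$ reaching a point $z_1$ (resp. $z_2$) that is \emph{simultaneously} deep enough ($\delta_D\gtrsim R_D\sqrt t$) and close enough to $\overline{xy}$ for Corollary \ref{cor:CKlow} to apply — this is precisely what $R_D>0$ and Proposition \ref{prop:mono} deliver — and one must organise the several sub-cases (according to which of $a^{*},b^{*},a,b$ falls below the thresholds $\sqrt t$, $R_D\sqrt t$, $\sqrt t/R_D^{2}$) so that the short-leg/long-leg split is legitimate and the borderline ranges fall into the small-scale case. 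Everything else reduces to routine use of Theorems \ref{thm:lower}, \ref{cor:mainlower}, Lemma \ref{lem:S1L}, \eqref{eq:vdB} and Corollary \ref{cor:CKlow}.
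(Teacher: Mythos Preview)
Your overall strategy is sound and the small-scale case ($d_0\le 1$) is a genuinely nice shortcut that the paper does not take: once $d_0\le 1$ the $R_D$-inequality at the midpoint forces $b^*,a^*\lesssim_{R_D}\delta_D(\tfrac{x+y}{2})$, and the whole second summand is absorbed by Theorem~\ref{cor:mainlower}. That reduction is clean and correct.

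The large-scale case, however, is organised differently from the paper and carries more unverified geometry than you acknowledge. Two points. First, you invoke Lemma~\ref{lem:S1L} directly, which forces you to exhibit, for each $z_1\in B(m_1,\sqrt{\beta_1 t})$, a half-capsule of radius $R_D\sqrt t/10$ inside $D$ with $x$ on the cap and $z_1$ at the specific axial position $(L-\sqrt{t_{\text{lemma}}},0,\dots,0)$; the sentence ``Proposition~\ref{prop:mono} together with $R_D>0$ guarantees $\delta_D\ge R_D\sqrt t/10$ along the axis'' hides the fact that the axis is not along $\overline{x'y'}$ (it starts at the sphere centre, displaced along the inward normal at $x'$, and ends near $m_1\in\overline{xy}$). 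This can be repaired --- concavity of $\delta_D$ on segments in $D$ reduces the check to the two axis endpoints, and a short transfer argument from $\overline{x'y'}$ to $m_1$ (using $a,b<t$) gives $\delta_D(z_1)\gtrsim R_D\sqrt t$ --- but it is not what you wrote. Second, the claim ``$z_1$ lies on the portion of $\overline{x'y'}$ where $\delta_D<1$'' is literally false (your $y'$ is the foot of $y$, and $z_1$ is near $\overline{xy}$); the correct statement is that the nearby point on $\overline{x'y'}$ at the same $\delta_{H_x}$-height ($\approx c\sqrt t$) lies before the first point with $\delta_D=1$, so Proposition~\ref{prop:mono} applies there.

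The paper avoids both issues by a different architecture. It does \emph{not} split on $d_0$; instead it first proves the one-sided bound \eqref{eq:aux2} under the asymmetric hypothesis $\delta_D(y)\ge 6\sqrt t$, choosing the intermediate point $m$ on $\overline{x\,\tfrac{x+y}{2}}$ by the condition $\delta_D(m)=2\sqrt t$ (so automatically $\delta_D(m)<1$, which makes the $R_D$-inequality and Proposition~\ref{prop:mono} apply at the corresponding point $m'$ on $\overline{x'y}$), and --- crucially --- uses Theorem~\ref{thm:lower} for the short leg rather than Lemma~\ref{lem:S1L}, so no capsule has to be placed at all. Then it bootstraps to general $x,y$ by a second two-leg split through a point $p$ near $\tfrac{x+y}{2}$ with $\delta_D(p)\ge 7\sqrt t$, applying the one-sided bound to each half. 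Your three-leg scheme and the paper's two-step scheme are equivalent in spirit, but replacing your direct call to Lemma~\ref{lem:S1L} by Theorem~\ref{thm:lower} would remove the main obstacle you identify and make your large-scale case essentially rigorous.
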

\begin{proof}
Due to Theorems \ref{thm:upper} and \ref{thm:lower} it is enough to show 
\begin{align}\label{eq:aux10}
p_D(t,x,y)\stackrel{r,T,R_D}{\gtrsim}\, p(t,x,y)\(1\wedge \frac{\d_{H_x}(x)\d_{H_x}\(y\)}{t}\)\(1\wedge \frac{\d_{H_y}(y)\delta_{H_y}\(x\)}{t}\).
\end{align}
Let $D\in \mathcal C^{1,1}_r(\R^n)$, $r>0$, be a strictly convex domain with $R_D>0$.  If $\delta_D(x), \delta_D(y)\geq\sqrt t$, the assertion follows from \eqref{eq:vdB}.

 Consider $\delta_D(x)\leq \sqrt t$ and $\delta_D(y)\geq 6\sqrt t$. Let $m$ be a point on the interval $\overline {x\(\frac{x+y}2\)}$ such that $\delta_D(m)=2\sqrt t$  and denote $\alpha=\frac{|x-m|}{|x-y|}\leq\frac13$. In order to take advantage of the assumption $D\in \mathcal S_R$, we need to choose suitably some points from the boundary. Indeed, let $x'\in \partial D$ be a point realising the distance of $x$ to $\partial D$ and let $y'\in\partial D$ be the other intersection point of $\partial D$ and the line containing $x'$ and $y$. Furthermore,  denote $m'=(1-\alpha)x+\alpha y$. Since $|m'-x'|\leq \alpha|x'-y|\leq\left|x'-\frac{x'+y}{2}\right|$ and  $\delta_D(m')\leq \delta_D(m)+\delta_D(x)\leq 3\sqrt t$  we get for $t\leq\frac19$
   $$|m'-x'|\leq \left|x'-\frac{x'+y'}{2}\right|\wedge \min\left\{|x'-z|:z\in \overline{x'y'}, \delta_D(z)\geq1\right\},$$
   and therefore  Proposition \ref{prop:mono}  gives us for $H_x=H_{x'}$
$$\delta_D(m')\geq R_D\delta_{H_x}(m')=\alpha R_D \delta_{H_x}(y).$$
 Then, for $z\in B(m,\sqrt t)$ we have 
$$\delta_D(z)\geq \frac12\delta_D(m)\geq  \frac13\delta_D(m')\geq\frac13\alpha R_D \delta_{H_x}(y).$$ 
Consequently, by Theorem \ref{thm:lower},  for such $z$ it holds
\begin{align*}
p_D\(\alpha t,x,z\)&\stackrel{r, R_D}{\gtrsim} \(1\wedge \frac{\d_{H_x}(x)\d_{H_x}\(y\)}{t}\)p\(\alpha t,z,y\),\ \ \ \ \ \ \ \ t<\frac19,
\end{align*}
and, by \eqref{eq:vdB},
\begin{align*}
p_D\(\(1-\alpha\)t,z,y\)\approx p\(\(1-\alpha\)t,z,y\).
\end{align*}
Thus, by virtue of Proposition  \ref{cor:CKlow}, we get
\begin{align}\notag
p_D(t,z,y)&=\int_{B(m,\sqrt t)}p_D\(\alpha t,x,z\)p_D\(\(1-\alpha\)t,z,y\)dz\\\notag
&\hspace{-5pt}\stackrel{r, R_D}{\gtrsim} \(1\wedge \frac{\d_{H_x}(x)\d_{H_x}\(y\)}{t}\)\int_{B(m,\sqrt t)}p\(\alpha t,x,z\)p\(\(1-\alpha\)t,z,y\)dz\\\label{eq:aux2}
&\gtrsim \(1\wedge \frac{\d_{H_x}(x)\d_{H_x}\(y\)}{t}\)p(t,x,y),
\end{align}
where $x,y\in D$ with $\delta_D(y)\geq 6\sqrt t$ and $t<\frac19$.

Consider now any $x,y\in D$.  For $T\leq\frac19$ small enough there exists a point $p$ such that $\left|p-\frac{x+y}2\right|=7\sqrt t$ and $\delta_D(p)\geq 7\sqrt t$. Then, for $z\in B(p,\sqrt t)$ we have $\delta_D(p)\geq 6\sqrt t$ and $$\d_{H_x}(z)\approx \d_{H_x}(p)\approx \d_{H_x}\(\frac{x+y}2\)\geq\frac12\d_{H_x}(y) ,\ \ \ \ \d_{H_y}(z)\approx \d_{H_y}(p)\approx \d_{H_y}\(\frac{x+y}2\)\geq\frac12\d_{H_y}(x) .$$  Hence, by Chapman-Kolmogorov identity, the estimate \eqref{eq:aux2} and Proposition \ref{cor:CKlow},
\begin{align*}
p_D(t,z,y)&=\int_{B(p,\sqrt t)}p_D\( t/2,x,z\)p_D\(t/2,z,y\)dz\\
&\stackrel{r,T, R_D}{\gtrsim} \(1\wedge \frac{\d_{H_x}(x)\d_{H_x}\(y\)}{t}\)\(1\wedge \frac{\d_{H_y}(y)\delta_{H_y}\(x\)}{t}\)p(t,x,y)
\end{align*}
as required. In order to extend the range of $T$ into any positive number, we proceed analogously as in the proof of Theorem \ref{thm:lower}.

\end{proof}

\subsection{Examples}

In this section we present some examples of sets/classes of sets and discuss the behaviour of their heat kernels.

\begin{prop}\label{prop:y>x^p}
Consider a domain   $U=\{x\in \R^n: x_n>a|(x_1,...,x_{n-1})|^p\}$, where $p\geq2$, $n\geq2 $ and $a>0$. Then $U\in \mathcal S_R$. 

As a consequence, the heat kernel $p_U(t,x,y)$ admits estimates from Theorem \ref{thm:SF} with constants depending on $n, T, a, p$. 
\end{prop}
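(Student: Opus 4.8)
The plan is to verify that $U$ satisfies the three defining conditions of $\mathcal S_R$: it is strictly convex, it lies in $\mathcal C^{1,1}_r$ for some $r>0$, and $R_U>0$. Once this is done, the asserted heat-kernel bounds are exactly the conclusion of Theorem~\ref{thm:SF}, the constants there (depending on $r$, $T$, $R_U$) unravelling to constants depending on $n,T,a,p$; so the substance of the proof is entirely in checking $U\in\mathcal S_R$.

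Strict convexity and smoothness are routine. Writing $U=\{x\in\R^n:x_n>g(\utilde x)\}$ with $g(\utilde x)=a|\utilde x|^p=\phi(|\utilde x|)$, $\phi(\sigma)=a\sigma^p$, the map $\phi$ is strictly convex and strictly increasing on $[0,\infty)$; since the triangle inequality $|(1-\lambda)\utilde u+\lambda\utilde v|\le(1-\lambda)|\utilde u|+\lambda|\utilde v|$ is an equality only for $\utilde u,\utilde v$ on the same ray, where strict convexity of $\phi$ applies, $g$ is strictly convex on $\R^{n-1}$; hence $\partial U$ contains no segment and $U$ is strictly convex. For the $\mathcal C^{1,1}_r$ property I compute the principal curvatures of the surface of revolution $\partial U$, namely the meridian curvature $\kappa_1(\sigma)=\phi''(\sigma)(1+\phi'(\sigma)^2)^{-3/2}$ and the $(n-2)$-fold azimuthal curvature $\kappa_2(\sigma)=\phi'(\sigma)\sigma^{-1}(1+\phi'(\sigma)^2)^{-1/2}$, with $\phi'(\sigma)=ap\sigma^{p-1}$. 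Both are continuous on $[0,\infty)$ and tend to $0$ as $\sigma\to\infty$ and to $0$ (or, when $p=2$, to $2a$) as $\sigma\to0$, hence are bounded by some $\kappa_{\max}<\infty$. A convex domain whose principal curvatures are everywhere $\le1/r$ satisfies the interior ball condition of radius $r$, and the exterior ball condition with arbitrary radius holds by strict convexity (each tangent hyperplane $\partial H_w$ meets $\overline U$ only at $w$); so $U\in\mathcal C^{1,1}_r$ with $r=1/\kappa_{\max}$.

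The core is $R_U>0$. First I record that $U\notin\mathcal S_Q$: for $w=(s\theta,as^p)$, $z=(Rs\theta,aR^ps^p)$ with $\theta$ a unit vector, one checks $\delta_U(\tfrac{w+z}{2})/\delta_{H_w}(\tfrac{w+z}{2})\asymp R^{-(p-1)}\to0$ as $R\to\infty$, so Corollary~\ref{cor:SQest} and Theorem~\ref{thm:SQ} do not apply and the supremum in the definition of $R_D$ is genuinely needed — consistently with the stated consequence being Theorem~\ref{thm:SF}. The tool is a reduction to the planar set $V=\{(\sigma,\eta)\in\R^2:\eta>a|\sigma|^p\}$: since $g$ is radial, $\delta_U((\utilde x,x_n))=\delta_V((|\utilde x|,x_n))$, and $\delta_V$ is concave on $V$ and even in $\sigma$, so $\sigma\mapsto\delta_V((\sigma,\eta))$ is non-increasing on $[0,\infty)$. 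For $w,z\in\partial U$ set $s=|\utilde w|$, $t=|\utilde z|$, $\theta=\utilde w/s$ (the vertex case $s=0$ is separate and easy); along the chord $m_\alpha=(1-\alpha)w+\alpha z$ one has $|\utilde{m_\alpha}|\le(1-\alpha)s+\alpha t$ and $(m_\alpha)_n=(1-\alpha)as^p+\alpha at^p$, whence $\delta_U(m_\alpha)\ge\delta_V\big((1-\alpha)(s,as^p)+\alpha(t,at^p)\big)$ with the argument lying on the chord between two boundary points of $V$ on the positive arm, while a direct computation gives $\delta_{H_w}(m_\alpha)=\alpha\,a\big(t^p-s^p-ps^{p-1}(\theta\cdot\utilde z-s)\big)\big(1+a^2p^2s^{2p-2}\big)^{-1/2}$, comparable, since $|\theta\cdot\utilde z|\le t$, to $\alpha$ times the distance from $(\pm t,at^p)$ to the tangent line of $\partial V$ at $(s,as^p)$. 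Thus everything reduces to planar estimates for the pair $(s,as^p),(\pm t,at^p)\in\partial V$. There I use the explicit form $\phi(\sigma)=a\sigma^p$: near $(s,as^p)$ the arm $\eta=a\sigma^p$ differs from its tangent line by $O\big(\phi''(\xi)(\sigma-s)^2\big)$, so for small $\alpha$ both $\delta_V(\bar m_\alpha)$ and the distance of $\bar m_\alpha$ to that tangent line are $\approx\alpha\cdot(\text{distance of }(\pm t,at^p)\text{ to the tangent line})$; combining this with the monotonicity of Proposition~\ref{prop:mono} and the fact that the Bregman/Jensen defects of $\sigma\mapsto\sigma^p$ at $s$ and $t$ are comparable to $\phi''(\xi)(t-s)^2$, one shows that when $\delta_V(\bar m_{1/2})>1$ the smallest $\alpha$ with $\delta_V(\bar m_\alpha)=1$ is already small enough that the ratio $\delta_V(\bar m_\alpha)/\delta_{\tilde H}(\bar m_\alpha)$ is bounded below there, while for $\delta_V(\bar m_{1/2})\le1$ one checks the ratio directly at the midpoint. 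Pairs with $s$ and $t$ both below an explicit constant are handled by compactness: on such a compact set of pairs the relevant ratio extends continuously (value $1$ on the diagonal $w=z$, by order-two tangency) and is positive, hence bounded below.

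I expect the last planar estimate — the case $\delta_V(\bar m_{1/2})>1$ — to be the main obstacle, precisely because $Q_U=0$: the naive choice $m=\tfrac{w+z}{2}$ fails, so one must place $m$ close to $w$ while still keeping $\delta_U(m)>1$, and control all constants uniformly across the aspect ratio $t/s$ of the chord (same arm versus opposite arm, $t\approx s$, $t\gg s$, $t\ll s$) and across the dimension $n$. The remaining ingredients — strict convexity, the curvature bound, the compactness argument, and the final appeal to Theorem~\ref{thm:SF} — are straightforward.
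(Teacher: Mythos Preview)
Your overall architecture --- verify strict convexity and the $\mathcal C^{1,1}_r$ condition, then prove $R_U>0$ via a local Taylor-type estimate near the diagonal plus a compactness/subsequence argument for the rest, and finally invoke Theorem~\ref{thm:SF} --- is exactly the shape of the paper's proof. The gap is in your reduction to the planar domain $V$.

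You bound $\delta_U(m_\alpha)$ below by $\delta_V(\bar m_\alpha)$ with $\bar m_\alpha=(1-\alpha)(s,as^p)+\alpha(t,at^p)$ on the \emph{same-arm} chord of $V$, using $|\utilde{m_\alpha}|\le(1-\alpha)s+\alpha t$ and monotonicity of $\sigma\mapsto\delta_V((\sigma,\eta))$. You then claim $\delta_{H_w}(m_\alpha)$ is comparable to the distance from $(\pm t,at^p)$ to the tangent line at $(s,as^p)$. These two bounds do not match. Take $s=t$ and $\utilde z\perp\utilde w$ (so $\theta\cdot\utilde z=0$). The same-arm chord in $V$ degenerates to the single point $(s,as^p)$, so your lower bound gives $\delta_V(\bar m_\alpha)=0$, while your own formula yields
\[
\delta_{H_w}(m_\alpha)=\alpha\,\frac{a\big(t^p+(p-1)s^p-ps^{p-1}\cdot 0\big)}{\sqrt{1+a^2p^2s^{2p-2}}}=\frac{\alpha\,aps^p}{\sqrt{1+a^2p^2s^{2p-2}}}>0.
\]
The true ratio is fine here (for $\alpha=\tfrac12$ and small $s$ one gets $\delta_U(m_{1/2})\approx as^p(1-2^{-p/2})$, hence ratio $\approx 2(1-2^{-p/2})/p$), but your planar bounds cannot see it. More generally, for $c=\theta\cdot\utilde z\in(-t,t)$ the same-arm numerator bound is independent of $c$ while $\delta_{H_w}(m_\alpha)$ is linear in $c$; neither sign in $(\pm t,at^p)$ makes the claimed comparability true. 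What the planar picture discards is precisely the component of $\utilde z-\utilde w$ perpendicular to $\utilde w$.

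The paper does not reduce to $V$. It writes $\utilde v=\utilde z-\utilde w=(v_1,\widetilde{\utilde v})$ in radial/perpendicular coordinates relative to $\utilde w=(w_1,0,\dots,0)$ and applies a two-variable Taylor expansion to $((w_1+x)^2+y^2)^{p/2}$ at $(x,y)=(v_1,|\widetilde{\utilde v}|)$, obtaining, for $|\utilde v|<\tfrac12|\utilde w|$,
\[
\delta_{H_w}^{\downarrow}(z)\stackrel{p}{\approx} w_1^{p-2}|\utilde v|^2
\qquad\text{and}\qquad
\delta_U^{\downarrow}\!\Big(\frac{w+z}{2}\Big)\stackrel{p}{\gtrsim} w_1^{p-2}|\utilde v|^2,
\]
so the local ratio is $\ge c_p$ directly in $\R^n$. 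With that in hand, the paper runs a subsequence argument very close to what you outline, and in the unbounded case chooses $m\in\overline{wz}$ at a fixed height above $w$ (not the midpoint) to witness $\delta_U(m)\approx\delta_{H_w}(m)>1$.

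Two smaller points. Your ``value $1$ on the diagonal, by order-two tangency'' is not right: at the vertex for $p>2$ the contact is order $p$, and even at smooth points the limiting ratio along shrinking chords is a positive constant depending on $p$ rather than $1$; the conclusion you need (positive limit, hence compactness works) still holds, but the stated reason does not. And the central planar estimate in your last paragraph --- placing $m$ when $\delta_V(\bar m_{1/2})>1$ --- is asserted (``one shows'') rather than carried out; this is exactly where the supremum in $R_D$ is used and where most of the actual work sits.
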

\begin{proof}

Since changing $a$ only rescales distances in the definition of $R_D$, we may assume  $a=1$. Consider $w,z\in \partial U$ and let us denote $\utilde w=(w_1,...,w_{n-1})$. Additionally, we assume, without loss of the generality, that $w=(w_1,0,...,0,w_n)$ with $w_{1},w_n\geq0$. The hyperplane $P_w$ tangent to $U$ at $w$ is then given by the equation
$$x_n=pw_{1}^{p-1}(x_1-w_1)+w_1^p.$$
First, we will show that  for $w,z$ satisfying   $|\utilde w-\utilde z|<\frac12 |\utilde w|$ it holds 
\begin{align}\label{eq:d/d>cp}
\frac{\delta_U\(\frac{w+z}{2}\)}{\delta_{H_{w}}\(\frac{w+z}2\)}>c_p,
\end{align}
for some constant $c_p>0$. Let $\delta_{H_w}^{\downarrow}(z)$, $z\in \partial U$, denote the distance between $z$ and its ''vertical'' projection onto $P_w$, i.e. projection along the vector $(0,...,0,-1)$,  given by
$$\delta_{H_w}^{\downarrow}(z)=z_n-\(pw_{1}^{p-1}(z_1-w_1)+w_1^p\).$$
Using the elementary  formula for the distance between a point and a  hyperplane, we get
$$\delta_{H_w}\(z\)=\frac{|z_n-\(pw_{1}^{p-1}(z_1-w_1)+w_1^p\)|}{\sqrt{\(pw_1^{p-1}\)^2+1}}=\frac{\delta^{\downarrow}_{H_{w}}\(z\)}{\sqrt{\(pw_1^{p-1}\)^2+1}},$$
which implies
$$\delta_{H_w}\(\frac{w+z}2\)=\frac12 \delta_{H_w}\(z\)\stackrel{p}{\approx}\frac{\delta^{\downarrow}_{H_{w}}\(z\)}{w_1^{p-1}+1}.$$
Furthermore, by two-dimensional Taylor's formula applied to $f(x,y)=((w_1+x)^2+y^2)^{p/2}$ we may write for $\utilde v=\utilde z-\utilde w$
\begin{align*}
\delta_{H_w}^{\downarrow}(z)&=\((w_1+v_1)^2+|\widetilde{\utilde v}|^2\)^{p/2}-\(pw_1^{p-1}v_1+w_1^p\)\\
&=v_1^2\frac p2\(((w_1+\xi_1)^2+\xi_2^2)^{\frac p2-1}+(p-2)(w_1+\xi_1)^2((w_1+\xi_1)^2+\xi_2^2)^{\frac p2-2}\)\\
&\ \ \ +|\widetilde{\utilde v}|^2\frac p2\(((w_1+\xi_1)^2+\xi_2^2)^{\frac p2-1}+(p-2)\xi_2^2((w_1+\xi_1)^2+\xi_2^2)^{\frac p2-2}\),
\end{align*}
where $|\xi_1|, |\xi_2|\leq |\utilde v|$, $\xi_1,\xi_2\in\R$. Hence, for $|\utilde v|/|\utilde w|<\frac12$  we have   $\delta_{H_w}^{\downarrow}(z)\stackrel{p}{\approx} w_1^{p-2}|\utilde v|^2$, and consequently
\begin{align*}
\delta_{H_w}\(\frac{w+z}{2}\)\stackrel{p}{\approx} \frac{w_1^{p-2}}{w_1^{p-1}+1}|\utilde w-\utilde z|^2.
\end{align*}
Next we will estimate the distance from $\frac{w+z}2$ to the boundary od $U$. One can see that if $\frac{w_n+z_n}2\leq 1$, then $\delta_U\(\frac{w+z}2\)\approx \delta_U^{\downarrow}\(\frac{w+z}2\)$, and if $\frac{w_n+z_n}2> 1$, then $\delta_U\(\frac{w+z}2\)\approx \delta_U^{\rightarrow}\(\frac{w+z}2\)$, where, analogously as previously, $\delta_{U}^{\downarrow}(u)$ denotes the distance between $u$ and its  projection onto $\partial U$ along the vector $(0,...,0,-1)$ and $\delta_{U}^{\rightarrow}(u)$ denotes the distance between $z$ and its  projection onto $\partial U$ along the vector $\utilde u$. In the case  $\frac{w_n+z_n}2\leq 1$ convexity of the function $\R\ni x\rightarrow x^{p/2}$ and the estimate $a^q-b^q {\approx}q(a-b)a^{q-1}$ valid for $0\leq b<a$ and $q\geq0$, give  us for $\utilde v=\utilde z-\utilde w$ and $|\utilde v|/|\utilde w|<\frac12$ 
\begin{align}\nonumber
\delta_U\(\frac{w+z}2\)&\approx \delta_U^{\downarrow}\(\frac{w+z}2\)=\frac12\(w_n+z_n\)-\left|\frac{\utilde w+\utilde z}2\right|^p\\\label{eq:aux15}
&=\(\frac12(w_1^2)^{p/2}+\frac12\big((w_1+v_1)^2+|\widetilde{\utilde v}|^2\big)^{p/2}\)-\((w_1+\frac12v_1)^2+|\frac12\widetilde{\utilde v}|^2\)^{p/2}\\\nonumber
&\geq\(\frac12w_1^2+\frac12(w_1+v_1)^2+\frac12|\widetilde{\utilde v}|^2\)^{p/2}-\((w_1+\frac12v_1)^2+|\frac12\widetilde{\utilde v}|^2\)^{p/2}\\\nonumber
&\stackrel{p}{\approx}\(\frac14v_1^2+\frac14|\widetilde{\utilde v}|^2\)\(\frac12w_1^2+\frac12(w_1+v_1)^2+\frac12|\widetilde{\utilde v}|^2\)^{\frac p2-1}\\[8pt]\nonumber
&\approx |\utilde v|^2w_1^{p-2}.
\end{align}
Furthermore, using the above bound, we get for $\frac{w_n+z_n}2> 1$
\begin{align*}
\delta_U\(\frac{w+z}2\)&\approx \delta_U^{\rightarrow}\(\frac{w+z}2\)=\(\frac{w_n+z_n}2\)^{1/p}-\left|\frac{\utilde w+\utilde z}2\right|\\
&\stackrel{p}{\approx}\({\frac12\(w_n+z_n\)-\left|\frac{\utilde w+\utilde z}2\right|^p}\)\(\frac{w_n+z_n}2\)^{\frac1p-1}\\
&\stackrel{p}{\approx }\delta_U^{\downarrow}\(\frac{w+z}2\)w_n^{\frac1p-1}\stackrel{p}{\gtrsim} |\utilde v|^2w_1^{-1}.
\end{align*}
This implies that for any value of $\frac{w_n+z_n}2\stackrel{p}{\approx}w_1^p $ we have 
$$\delta_U\(\frac{w+z}2\)\stackrel{p}{\gtrsim}\frac{w_1^{p-2}}{w_1^{p-1}+1}|\utilde w-\utilde z|^2\stackrel{p}{\approx}\delta_{H_w}\(\frac{w+z}{2}\),$$
which ends the proof of \eqref{eq:d/d>cp}.

Let us pass to the main part of the proof. Assume that $U\notin \mathcal S_R$, i.e. $R_U=0$. Then, there is a sequence of pairs $(w^{(k)},z^{(k)})_{k\geq1}$ from $\partial U\times\partial U$  such that $\lim_{k\rightarrow \infty}f_U(w^{(k)},z^{(k)})=0$, where 
$$f_U(w,z)=\left\{
\begin{array}{lll}
\frac{\delta_U\(\frac{w+z}{2}\)}{\delta_{H_{w}}\(\frac{w+z}2\)},&\text{ if }&\delta_{U}\(\frac{w+z}2\)\leq1,\\[10pt]
\sup_{\substack{m\in\overline{wz}\\\delta_U(m)>1}}\frac{\delta_U\(m\)}{\delta_{H_{w}}\(m\)},&\text{ if }&\delta_{U}\(\frac{w+z}2\)>1.
\end{array}\right.$$
Furthermore, there is a subsequence $(w^{(k_l)},z^{(k_l)})$ such that $|w^{(k_l)}|\rightarrow\infty$ or $w^{(k_l)}\rightarrow w^{(0)}$ for some $w^{(0)}\in \partial U$. Analogously, in each  case there  is a subsubsequence $(w^{(k_{l_m})},z^{(k_{l_m})})$ such that $|z^{(k_{l_m})}|\rightarrow\infty$ or $z^{(k_{l_m})}\rightarrow z^{(0)}$ for some $z^{(0)}\in \partial U$. We will show  that in any case $\liminf_{m\rightarrow\infty} f_U(w^{(k_{l_m})},z^{(k_{l_m})})>0$, which will finish the proof due to contradiction with the assumption $R_U=0$.

Assume first $w^{(k_l)}\rightarrow w^{(0)}$. If $z^{(k_{l_m})}\rightarrow z^{(0)}\neq w^{(0)}$, then $\liminf _{m\rightarrow \infty}f_U(w^{(k_{l_m})},z^{(k_{l_m})})\geq {\delta_U\(\frac{w^{(0)}+z^{(0)}}{2}\)}/{\delta_{H_{w^{(0)}}}\(\frac{w^{(0)}+z^{(0)}}2\)}>0$. In case $z^{(k_{l_m})}\rightarrow z^{(0)}=w^{(0)}$ there are  two options: for $|\utilde w -\utilde z|\leq \frac12|\utilde w|$  the inequality \eqref{eq:d/d>cp} ensures that $ f_U(w^{(k_{l_m})},z^{(k_{l_m})})>c_p$, while for $|\utilde w -\utilde z|> \frac12|\utilde w|$, which may happen only if $w^{(0)}=z^{(0)}=0$, we have  ${\delta_U\(\frac{w^{(k_{l_m})}+z^{(k_{l_m})}}{2}\)}\stackrel{p}{\approx }{\delta_{H_{w}}\(\frac{w^{(k_{l_m})}+z^{(k_{l_m})}}2\)}\stackrel{p}{\approx }\max\{|\utilde w|^p, |\utilde z|^p\}$. Eventually, for $|z^{(k_{l_m})}|\rightarrow\infty$ let us denote $m=(w^{(0)}_1,...,w^{(0)}_{n-1},m_n)$ such that $m_n>w^{(0)}_n$ and $\delta_U(m)=2$ and let $m^{(k_{l_m})}\in\overline{w^{(k_{l_m})}z^{(k_{l_m})}}$ be such that $m^{(k_{l_m})}_n=m_n$. We clearly have $m^{(k_{l_m})}\rightarrow m$ and therefore $\delta_U\(m^{(k_{l_m})}\)\rightarrow 2$, which implies $\liminf_{m\rightarrow\infty} f_U(w^{(k_{l_m})},z^{(k_{l_m})})\geq\frac2{\delta_{H_{w^{(0)}}}(m)}>0$.

Assume now $|w^{(k_l)}|\rightarrow \infty$. The bound \eqref{eq:d/d>cp} implies that if $|w^{(k_l)}_1-z^{(k_l)}_1|, |\widetilde{\utilde w}^{(k_l)}-\widetilde{\utilde z}^{(k_l)}|\leq \frac14|{\utilde w}^{(k_l)}|$  then $f_U(w^{(k_{l_m})},z^{(k_{l_m})})>c_p$. Furthermore, one can verify that for $ |\widetilde{\utilde w}^{(k_l)}-\widetilde{\utilde z}^{(k_l)}|> \frac14|{\utilde w}^{(k_l)}|$ we have ${\delta_U\(\frac{w^{(k_{l_m})}+z^{(k_{l_m})}}{2}\)}{\approx}{\delta_{H_{w}}\(\frac{w^{(k_{l_m})}+z^{(k_{l_m})}}2\)}\approx \max\{|w^{(k_{l_m})}_1|, |z^{(k_{l_m})}_1|\}$, so the remaining case is $|w^{(k_l)}_1-z^{(k_l)}_1|> \frac14|{\utilde w}^{(k_l)}|= \frac14|{ w_1}^{(k_l)}|$. Choosing $m^{(k_{l_m})}\in\overline{w^{(k_{l_m})}z^{(k_{l_m})}}$ such that $|m^{(k_{l_m})}_n-w^{(k_{l_m})}_n|=\frac18 w^{(k_{l_m})}_n$ we have  ${\delta_U\(m^{(k_{l_m})}\)}{\approx}{\delta_{H_{w}}\(m^{(k_{l_m})}\)}\approx w^{(k_{l_m})}_1\rightarrow\infty$, which may be observed, for instance, considering $z=\(\frac54w_1,0,...,0,  (\frac54w_1)^p \)$ and $z=\(\frac34w_1,0,...,0,  (\frac34w_1)^p \)$. This ends the proof.

\end{proof}

\begin{prop}\label{prop:analytical}
For $n=2$ the class $\mathcal S_Q$ contains strictly convex bounded domains with analytical boundary. As a consequence, the heat kernels of such  sets admit estimates from both: Theorem \ref{thm:SF}  and Theorem \ref{thm:SQ}.
\end{prop}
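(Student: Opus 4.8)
Throughout, the claim amounts to showing that any bounded strictly convex domain $D\subset\R^2$ with analytic boundary has $Q_D>0$; the two displayed families of estimates then follow from Corollary \ref{cor:SQest} together with Theorems \ref{thm:SQ} and \ref{thm:SF}. For $w,z\in\partial D$, $w\neq z$, put $F(w,z)=\delta_D\!\left(\tfrac{w+z}{2}\right)/\delta_{H_w}\!\left(\tfrac{w+z}{2}\right)$; we must bound $F$ away from $0$. Since $D$ is $\mathcal C^{1,1}$, the inward unit normal $\nu_w$ — hence $H_w$ — depends continuously on $w\in\partial D$, so $F$ is continuous on $\{w\neq z\}$; since $D$ is bounded, $\delta_{H_w}\!\left(\tfrac{w+z}{2}\right)\le\left|\tfrac{w+z}{2}-w\right|\le\tfrac12\operatorname{diam}D$; and since $D$ is strictly convex, $\tfrac{w+z}{2}\in\operatorname{int}D$, so $\delta_D\!\left(\tfrac{w+z}{2}\right)>0$. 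Hence for every $\varepsilon>0$ the function $F$ is continuous and strictly positive on the compact set $\{(w,z):|w-z|\ge\varepsilon\}$, so it is bounded below there; it remains to control $F$ when $w$ and $z$ are close to a common boundary point.

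Fix $p\in\partial D$ and take Cartesian coordinates centered at $p$ whose first axis lies along $P_p$ and with the inward normal in the positive $y$-direction, so that near $p$ one has $\partial D=\{(x,f(x))\}$ and $D=\{y>f(x)\}$ with $f$ analytic, convex, and $f(0)=f'(0)=0$ (hence $f\ge0$). \emph{Here analyticity is essential}: since $D$ is strictly convex, $f$ does not vanish identically, so its Taylor series at $0$ has a first nonzero term $c\,x^{k}$, and convexity near $0$ forces $k=2\ell$ with $\ell=\ell(p)\in\N$ and $c=c(p)>0$. (If $f$ were only $\mathcal C^\infty$ it could vanish to infinite order, e.g.\ $f(x)=e^{-1/x^{2}}$, and one checks that then $F\to0$ along suitable pairs $(w,z)\to(p,p)$, so the statement genuinely fails without analyticity.)

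Let $w=(a,f(a))$, $z=(b,f(b))$ with $a,b$ small, $a\neq b$, and $m=\tfrac{w+z}{2}$. Using the integral forms $f(b)-f(a)-f'(a)(b-a)=\int_a^b(b-t)f''(t)\,dt$ and $\tfrac{f(a)+f(b)}{2}-f\!\left(\tfrac{a+b}{2}\right)=\tfrac12\int_a^b\min(t-a,b-t)\,f''(t)\,dt$, the fact that near $p$ the curve is nearly flat (slopes $O(|x|^{2\ell-1})$) — so that $\delta_{H_w}(m)$ equals half the tangent defect up to a factor $1+o(1)$, and $\delta_D(m)$ equals the vertical drop $\tfrac{f(a)+f(b)}{2}-f\!\left(\tfrac{a+b}{2}\right)$ up to a factor $1+o(1)$ — and $f''(t)=c\,2\ell(2\ell-1)\,t^{2\ell-2}(1+O(|t|))$ with nonnegative kernels $b-t$ and $\min(t-a,b-t)$ on $[a,b]$, one gets
\begin{equation*}
F(w,z)=\bigl(1+o(1)\bigr)\,\frac{2\,\Psi(a,b)}{\Phi(a,b)},\qquad
\Phi(a,b)=b^{2\ell}-a^{2\ell}-2\ell\,a^{2\ell-1}(b-a),\ \ \Psi(a,b)=\frac{a^{2\ell}+b^{2\ell}}{2}-\Bigl(\frac{a+b}{2}\Bigr)^{2\ell},
\end{equation*}
uniformly as $(a,b)\to(0,0)$. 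Now $\Phi$ and $\Psi$ are homogeneous of degree $2\ell$; by strict convexity of $t\mapsto t^{2\ell}$ both are strictly positive whenever $a\neq b$, and both vanish to second order at $a=b$, where $2\Psi/\Phi$ extends continuously with value $\tfrac12$. Hence $2\Psi/\Phi$ is a continuous, strictly positive function of the ratio $a/b\in\R\cup\{\infty\}$, so $2\Psi/\Phi\ge c_\ell>0$, and therefore $F(w,z)\ge c_\ell/2$ for all $w,z$ in a sufficiently small neighbourhood $V_p$ of $p$.

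Since $\partial D$ is compact it is covered by finitely many such neighbourhoods $V_{p_1},\dots,V_{p_N}$; let $\varepsilon>0$ be a Lebesgue number of this cover and $c(\varepsilon)>0$ the bound from the first step. Any $w\neq z$ with $|w-z|<\varepsilon$ lies in some $V_{p_i}$, so $F(w,z)\ge\min_i c_{\ell(p_i)}/2$, and hence $Q_D\ge\min\{c(\varepsilon),\,c_{\ell(p_1)}/2,\dots,c_{\ell(p_N)}/2\}>0$, i.e.\ $D\in\mathcal S_Q$. The main difficulty is the local estimate of the third step at a point of vanishing curvature ($\ell\ge2$): one must show that $\delta_D(m)$ and $\delta_{H_w}(m)$ decay at exactly the same polynomial rate as $w,z\to p$, uniformly over all shapes of the chord $\overline{wz}$; this is precisely where finiteness of the flattening order $\ell$ — i.e.\ analyticity — is indispensable, and it is the local-graph counterpart of the computation carried out for the sets $\{x_n>a|(x_1,\dots,x_{n-1})|^p\}$ in the proof of Proposition \ref{prop:y>x^p}.
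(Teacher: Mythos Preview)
Your proof is correct and takes essentially the same approach as the paper: compactness handles pairs $(w,z)$ that stay bounded apart, and a local Taylor expansion near a common limit point---using analyticity to guarantee a finite vanishing order---shows that $\delta_D\!\left(\tfrac{w+z}{2}\right)$ and $\delta_{H_w}\!\left(\tfrac{w+z}{2}\right)$ scale identically. The only differences are in execution: you argue directly via a finite cover and a Lebesgue number and reduce the local estimate, through the homogeneity of $\Phi,\Psi$, to positivity of a continuous function on the compact ratio space $a/b\in\R\cup\{\infty\}$, whereas the paper proceeds by contradiction with subsequences and an explicit case split $|w_1-z_1|\gtrless\tfrac12|w_1|$; you also make the parity constraint $k=2\ell$ explicit, which the paper leaves implicit.
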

\begin{proof}
The idea of the proof is similar as in the case of previous proposition. Let $D$ be a strictly convex bounded domain with analytical boundary and suppose there is a sequence $(w^{(k)},z^{(k)})_{k\geq1}$, $w^{(k)},z^{(k)}\in \partial D$, such that 
\begin{align}\label{eq:star}
\liminf_{k\rightarrow \infty}\frac{\delta_D\(\frac{w^{(k)}+z^{(k)}}{2}\)}{\delta_{H_{w^{(k)}}}\(\frac{w^{(k)}+z^{(k)}}2\)}=0.
\end{align}
 Compactness of $\partial D$ allows us assume that  $\lim_{k\rightarrow \infty}w^{(k)}=w^{(0)}$ and $\lim_{k\rightarrow \infty}z^{(k)}=z^{(0)}$ for some $w^{(0)}, z^{(0)}\in \partial D$. Since \eqref{eq:star} is clearly not satisfied for $w^{(0)}\neq z^{(0)}$, we therefore assume also $w^{(0)}= z^{(0)}$. Next, we  rotate and translate $D$ such  that $w^{(0)}=(0,0)$ and $D\subset \R\times(0,\infty)$. Since $\partial D$ is analytical,  there exists a neighbourhood $G$ of $(0,0)$ such that $\partial D\cap G$ is a graph of a function $y=f(x)=\sum_{i=i_0}^\infty a_ix^i$ where $i_0\geq 2$, $a_{i_0}>0$, $(x,y)\in G$. In particular, this implies $\lim _{x\rightarrow 0}\frac{f(x)}{a_{i_0}x^{i_0}}=1$.
Hence, for $|w_1^{(k)}-z_1^{(k)}|>\frac12|w_1^{(k)}|$ we have ${\delta_D\(\frac{w^{(k)}+z^{(k)}}{2}\)}{\approx}{\delta_{H_{w}}\(\frac{w^{(k)}+z^{(k)}}2\)}\approx \max\{|w^{(k)}_1|^{i_0}, |z^{(k)}_1|^{i_0}\}$, $k\geq k_0$ for some $k_0\in\N$. Furthermore, denoting $u=w_1^{(k)}$ and $v=z^{(k)}_1-w_1^{(k)}$, Taylor's formula gives us  for $|w_1^{(k)}-z_1^{(k)}|\leq\frac12|w_1^{(k)}|$
\begin{align}\nonumber
\delta_{H_{w^{(k)}}}\(\frac{w^{(k)}+z^{(k)}}2\)&=\frac12\delta_{H_{w^{(k)}}}\(z^{(k)}\)\\\nonumber
&=\cos\(\arctan\(f'(u+v)\)\)\Big(f(u+v)-\(f(u)+vf'(u)\)\Big)\\\nonumber
&\approx \sum_{i=i_0}^\infty a_i\((u+v)^i-\big(u^i+iu^{i-1}v\big)\)\\\label{eq:aux3}
&= \sum_{i=i_0}^\infty a_i\frac{i(i-1)}2\(u+\xi_i\)^{i-2}v^2\approx |u|^{i_0-2}v^2,
\end{align}
for some $|\xi_i|\leq \frac12|u|$. Similarly, 
\begin{align*}
\delta_D\(\frac{w^{(k)}+z^{(k)}}2\)&\approx \frac12\(f(w^{(k)}_1)+f(z_1^{(k)})\)-f\(\frac{w_1^{(k)}+z_1^{(k)}}2\)\\
&=\sum_{i=i_0}^\infty a_i\(\frac12u^i+\frac12\(u+v\)^i-\(u+\frac12v\)^i\)\\
&\geq a_{i_0}\(\frac12u^{i_0}+\frac12\(u+v\)^{i_0}-\(u+\frac12v\)^{i_0}\)\\
&\ \ \ \ \ -\sum_{i=i_0+1}^\infty |a_i|\(\frac12u^i+\frac12\(u+v\)^i-\(u+\frac12v\)^i\).
\end{align*}
Taking $|\widetilde{\utilde v}|=0$ in \eqref{eq:aux15}, we get
$$a_{i_0}\(\frac12u^{i_0}+\frac12\(u+v\)^{i_0}-\(u+\frac12v\)^{i_0}\)\stackrel{i_0}{\gtrsim}|u|^{i_0-2}v^2. $$
Furthermore, Taylor's formula used twice implies 
\begin{align*}
&\left|\sum_{i=i_0+1}^\infty |a_i|\(\frac12u^i+\frac12\(u+v\)^i-\(u+\frac12v\)^i\)\right|\\
&\stackrel{i_0}{\lesssim}\sum_{i=i_0+1}^\infty |a_i|i(i-1)\(|u|+|v|\)^{i-2}v^2\approx |u|^{i_0-1}v^2,
\end{align*}
and we conclude $\delta_D\(\frac{w^{(k)}+z^{(k)}}2\)\gtrsim |w^{(k)}_1|^{i_0-2}v^2$ for $|w^{(k)}|$ small enough.  Thus, in view of \eqref{eq:aux3}, for such $w^{(k)}$ it holds
$\delta_{H_{w^{(k)}}}\(\frac{w^{(k)}+z^{(k)}}2\)/\delta_D\(\frac{w^{(k)}+z^{(k)}}2\)\geq c(i_0)>0$, which  contradicts  \eqref{eq:star}.

\end{proof}

Let us define the following set
$$ S=\big(B_2((-1,0),1)\big)\cup\big((-1,1)\times(-1,1)\big)\cup \big(B_2((1,0),1)\big) \subset \R^2,$$
which is a square $(-1,1)\times(-1,1)$ with two semicircles added to its left and right sides. It is known as a stadium. The next example shows that for some range of arguments the heat kernel $p_S(t,x,y)$ is comparable neither to the bound from Theorem \ref{thm:upper} nor to the one from Theorem \ref{cor:mainlower}. Note that the space arguments realising the  indicated behaviour of $p_S(t,x,y)$   are located at opposite ends of the 'flat' part of the boundary, which strongly suggests that non-strict convexity is indeed the property that impacts on the incomparability of the  bounds.
\begin{exmp}\label{exmpl:1} Let $x,y\in S$ such that $x_1<-1$, $y_1>1$,  $x_2,y_2=1-t^{\gamma}$, $\gamma>0$, and $\delta_S(x),\delta_S(y)<t^{1+\gamma}$ with $t<1$. For $0<\gamma\leq\frac12$ we have
\begin{align*}
p_S(t,x,y)\approx p(t,x,y)\[\(1\wedge \frac{\d(x)\d(y)}{t}\)+\(1\wedge \frac{\d_{H_x}(x)\d_{H_x}\(y\)}{t}\)\(1\wedge \frac{\d_{H_y}(y)\delta_{H_y}\(x\)}{t}\)\],
\end{align*}
and  for $\gamma\geq\frac23$ it holds
$$p_S(t,x,y)\approx p(t,x,y)\[\(1\wedge\frac{\delta_S(x)\delta_S(y)}{t}\)+\(1\wedge \frac{\d_{S}(x)\d_{S}\(\frac{x+y}2\)}{t}\)\(1\wedge \frac{\d_{S}(y)\delta_{S}\(\frac{x+y}{2}\)}{t}\)\].$$
However, for $\frac12<\gamma<\frac23$ we have
\begin{align}\label{eq:aux12}
p_S(t,x,y)\approx \frac{\delta_S(x)\delta_S(y)}{t^{3(1-\gamma)}}p(t,x,y),
\end{align}
while 
$$\(1\wedge\frac{\delta_S(x)\delta_S(y)}{t}\)+\(1\wedge \frac{\d_{S}(x)\d_{S}\(\frac{x+y}2\)}{t}\)\(1\wedge \frac{\d_{S}(y)\delta_{S}\(\frac{x+y}{2}\)}{t}\)\approx \frac{\delta_S(x)\delta_S(y)}{t}$$
and 
$$\(1\wedge \frac{\d(x)\d(y)}{t}\)+\(1\wedge \frac{\d_{H_x}(x)\d_{H_x}\(y\)}{t}\)\(1\wedge \frac{\d_{H_y}(y)\delta_{H_y}\(x\)}{t}\)\approx \frac{\delta_S(x)\delta_S(y)}{t^{2-\gamma}}= \frac{\delta_S(x)\delta_S(y)}{t^{3(1-\gamma)+2\(\gamma-\tfrac12\)}}.$$
\end{exmp}
\begin{proof}
First, let us observe 
 \begin{align}\label{eq:aux4}
 \d_{S}\(\frac{x+y}2\)=t^{\gamma},\ \ \ \ \ \ \ \text{ and }\ \ \ \ \  \ \   \ \d_{H_x}(y)\approx \d_{H_y}(x)\approx t^{\gamma/2},
 \end{align}
 which immediately follows the  last two approximations in the assertion.
  
 Assume $\gamma\leq \frac23$ and put $\alpha=\frac{|x-(-1,x_2)|}{|x-y|}$. We have $\alpha \approx t^{\gamma/2}$ and therefore $\sqrt{\alpha t}<c t^{(2+\gamma)/4}\leq ct^\gamma$, $t<t_0$, for some $c, t_0>0$. Chapman-Kolmogorov identity, Theorem \ref{thm:lower} and Proposition \ref{cor:CKlow} give us for some $\varepsilon>0$
\begin{align*}
p_S(t,x,y)&\geq \int_{B_2\((-1,x_2),\frac12t^\gamma\)}\int_{B_2\((1,x_y),\frac12t^\gamma\)}p_S(\alpha t,x,w)p_S((1-2\alpha)t,w,z)p_S(\alpha t,z,y)dzdw\\[8pt]
&\gtrsim \(1\wedge \frac{\delta_S(x)t^{\gamma}}{\alpha t}\)\(1\wedge\frac{t^{2\gamma}}{t}\)\(1\wedge \frac{\delta_S(y)t^{\gamma}}{\alpha t}\)\\
&\ \ \ \int_{B_2\((-1,x_2),\frac1{2c}\sqrt {\alpha t}\)}\int_{B_2\((1,x_y),\frac1{2c}\sqrt {\alpha t}\)}p(\alpha t,x,w)p((1-2\alpha)t,w,z)p(\alpha t,z,y)dzdw\\[8pt]
&\stackrel{c}{\approx}  \(1\wedge \frac{\delta_S(x)\delta_{H_x}(y)}{ t}\)\(1\wedge{t^{2\gamma-1}}\)\(1\wedge \frac{\delta_S(y)\delta_{H_y}(x)}{ t}\)p(t,x,y)\\
&\approx  \frac{\delta_S(x)\delta_S(y)}{ t^{2-\gamma}}\(1\wedge{t^{2\gamma-1}}\)p(t,x,y).
\end{align*}
This implies the lower bound in \eqref{eq:aux12} and, together with Theorem \ref{thm:upper} and \eqref{eq:aux4}, the first estimate in the example. Furthermore, the lower bound in the latter estimate follows directly from \eqref{eq:lower2}. In order to obtain remaining upper bounds for $\gamma \geq \frac12$, we denote $H=\{x\in\R^2:x_2<1\}$ and  estimate $\delta_{H_x}(z)\delta_{H_y}(z), \lesssim \delta_H(z)+ t^{\gamma/2}$ for $z\in S$. Consequently, by Theorem \ref{thm:upper}, 
\begin{align*}
&p_S(t/2,x,z)\\
&\lesssim \(\(1\wedge \frac{\delta_S(x)\delta_S(z)}{t}\)+\(1\wedge \frac{\delta_S(x)\(\delta_H(z)+t^{\gamma/2}\)}{t}\)\(1\wedge\frac{\delta_H(z)t^{\gamma}}{t}\)\)p(t/2,x,z)\\
&\lesssim \frac{\delta_S(x)\delta_H(z)}{t}\(1+t^{\gamma-1}\(\delta_H(z)+t^{\gamma/2}\)\)p(t/2,x,z).
\end{align*}
Similarly, $p_S(t/2,z,y)\lesssim \frac{\delta_S(x)\delta_H(z)}{t}\(1+t^{\gamma-1}\(\delta_H(z)+t^{\gamma/2}\)\)p(t/2,z,y)$, hence
\begin{align*}
I_1&\lesssim  \leq   \frac{\delta_S(x)\delta_S(y)}{t^{2}}\int_{H}\(\delta_{H }(z)\)^2\(1+t^{2\gamma-2}\(\delta^2_{H }(z)+t^{\gamma}\)\)p(t/2,x,z)p(t/2,z,y)dz\\
&\lesssim \frac{\delta_S(x)\delta_S(y)}{t}\(1+t^{2\gamma-1}+t^{3\gamma-2}\)p(t,x,y),
\end{align*}
where the last inequality follows from  Proposition \ref{prop:CKHH} with $H_1=H_2=H$. 
For $\gamma\geq \frac23$, we get $p_S(t,x,t)\lesssim \frac{\delta_S(x)\delta_S(y)}{t}p(t,x,y)$, which, combined with \eqref{eq:lower2}, implies the second estimate in the example. Finally, for $\frac12<\gamma<\frac23$,  we have $\(1+t^{2\gamma-1}+t^{3\gamma-2}\)\approx t^{3\gamma-2}$,
which follows the upper bound in \eqref{eq:aux12}. The proof is complete.
 \end{proof}

\section*{Acknowledgements} The author would like to thank Jacek Ma{\l}ecki for inspiration and some fruitful discussion.\\
This work was supported by by the National Science Centre, Poland, grant no. 2015/18/E/ST1/00239

\end{document}